\documentclass[11pt]{article}
\usepackage[a4paper,margin=1in]{geometry}
\usepackage[T1]{fontenc}
\usepackage[utf8]{inputenc}
\usepackage{amsmath,amssymb,amsthm,mathtools}
\usepackage[colorlinks]{hyperref}

\newtheorem{theorem}{Theorem}[section]
\newtheorem{lemma}[theorem]{Lemma}
\newtheorem{proposition}[theorem]{Proposition}
\newtheorem{corollary}[theorem]{Corollary}
\theoremstyle{definition}
\newtheorem{definition}[theorem]{Definition}
\newtheorem{remark}[theorem]{Remark}

\title{$p$-variational capacity of interior condensers and geometric reduction by a fixed phase}
\author{Vicente Vergara\footnote{Department of Mathematics, Faculty of Physical and Mathematical Sciences, University of Concepci\'on, Concepci\'on, Chile. \texttt{vvergaraa@udec.cl}}}
\date{}

\begin{document}
	\maketitle

\begin{abstract}
	We study the $p$-variational capacity of interior condensers in a bounded open set $\Omega\subset\mathbb R^n$ when both plates are determined by a single phase $\theta:\Omega\to\mathbb R$ in $W^{1,\infty}(\Omega)$ through sublevel and superlevel sets. By restricting the admissible class to potentials of the form $u=v\circ\theta$ and applying the coarea formula, the problem reduces to a one-dimensional variational functional in the level variable, governed by an \textit{energy weight} that combines the gradient profile of $\theta$ and the geometry of its level sets. We obtain an explicit formula for the \textit{reduced problem}, construct an explicit optimal profile, and deduce an upper bound for the full geometric capacity by fibered restriction. In addition, we derive estimates for the energy weight in terms of the gradient and the size of the fibers, and analyze the local effect of critical levels on the integrability of the reduced resistance. Finally, we present symmetric models in which the fibered reduction coincides with the full geometric capacity and, in the linear case, a quantitative tangential obstruction to that exactness.
\end{abstract}

{\bf Keywords:} p-variational capacity; interior condensers; phase-induced reduction; coarea formula; level-set geometry; energy weight; one-dimensional reduction; critical levels.

{\bf AMS MSC 2020:} Primary 35J92, 31C45; Secondary 35A15, 31B15, 31C15, 28A75.

\section{Introduction}
\label{sec:introduccion-capacidad-fase}

The $p$-variational capacity of an \textit{interior condenser} in an open set $\Omega\subset\mathbb R^n$ measures the minimal cost of imposing a transition between two disjoint plates $E,F\subset\Omega$. In this work we study a geometrically organized subclass of this problem: the one in which both plates are determined by a single phase function $\theta:\overline\Omega\to\mathbb R$, Lipschitz on $\overline\Omega$. Given $a<b$, we introduce the sublevel and superlevel sets
\[
E_a:=\{x\in\Omega:\theta(x)\le a\},
\qquad
F_b:=\{x\in\Omega:\theta(x)\ge b\},
\]
and consider the full geometric capacity of the associated interior condenser,
\[
\operatorname{Cap}_{p,\Omega}(E_a,F_b).
\]
We shall call an \emph{interior condenser} a pair $(E,F)$ of subsets of $\Omega$ such that
\[
\overline E\subset \Omega,
\qquad
\overline F\subset \Omega,
\qquad
\overline E\cap\overline F=\varnothing.
\]
More precisely, if $(E,F)$ is an interior condenser in $\Omega$, we write
\[
\operatorname{Cap}_{p,\Omega}(E,F)
:=
\inf_{u\in \mathcal A_{E,F}(\Omega)}
\int_\Omega |\nabla u(x)|^p\,\mathrm{d}x,
\]
where
\[
\mathcal A_{E,F}(\Omega)
:=
\bigl\{
u\in W^{1,p}(\Omega):
u\le 0 \text{ q.e. on } E,\ 
u\ge 1 \text{ q.e. on } F
\bigr\}.
\]
Here and throughout, q.e.\ means quasi-everywhere with respect to the Sobolev $p$-capacity; see, for example, \cite{HeinonenKilpelainenMartio2006}. The quantity $\operatorname{Cap}_{p,\Omega}(E,F)$ thus belongs to the classical framework of variational condenser capacity; in particular, the modern theory of condenser capacity provides a natural background for this formulation, see, for example, \cite{BjornBjorn2024Condenser} and \cite[Def.~3.6]{GibaraKorteShanmugalingam2024}. Our goal, however, is not to develop that abstract theory, but rather to study the geometric reduction induced by a fixed phase in the interior Euclidean regime considered here.

We adopt the following perspective. Instead of starting from the capacitary solution of the geometric problem and then studying the geometry of its level sets, we fix from the outset a phase $\theta$ and ask which effective problem it induces on the level variable. From this perspective, a single phase simultaneously organizes the position of the plates, the fibered class of admissible functions, and the energy cost associated with the transition.

Several lines of work have studied the interaction between capacity, $p$-harmonic equations, and the geometry of level sets of capacitary functions.

In convex annular domains, the classical works of Lewis showed that the level-set geometry of $p$-harmonic solutions has a rigid structure; subsequently, Lewis and Nyström developed a fine analysis of $p$-harmonic functions and capacitary functions in Lipschitz and starlike annular domains \cite{Lewis1977ConvexRings,LewisNystrom2007RingDomains}. In a complementary direction, a more geometric literature has studied convexity, quasiconcavity, and curvature estimates for level sets of elliptic solutions in \emph{convex rings}, including results of Korevaar, Bianchini--Longinetti--Salani, Jost--Ma--Ou, and Zhang--Zhang \cite{Korevaar1990EllipticRingProblems,BianchiniLonginettiSalani2009ConvexRings,JostMaOu2012CurvatureLevelSets,ZhangZhang2013ConvexityLevelSets}. Our purpose here is different: not to develop a general theory of level sets of capacitary potentials, but to isolate the variational structure induced by a fixed phase and to understand which geometric information about $\theta$ controls the minimal energy cost within the corresponding fibered class.

The fundamental restriction is to consider admissible functions of the form
\[
u(x)=v(\theta(x)).
\]
Under this \textit{fibered ansatz}, the level variable $t=\theta(x)$ becomes the governing variable of the problem. The coarea formula then shows that the $p$-Dirichlet energy is not determined by the mere volumetric pushforward of $\theta$, but rather by the effective \textit{energy weight}
\begin{equation}\label{eq:peso-energetico-intro}
	A_{p,\theta}(t)
	=
	\int_{\Sigma_t\cap\Omega}
	|\nabla\theta(x)|^{p-1}\,\mathrm{d}\mathcal H^{n-1}(x),
	\qquad
	\Sigma_t:=\theta^{-1}(t).
\end{equation}
This weight records both the size of the fiber $\Sigma_t\cap\Omega$ and the contribution of the gradient law of $\theta$ to the energy cost of each level.

The restriction to \textit{fibered profiles} $\Sigma_t \cap \Omega$ thus leads to a weighted one-dimensional functional in the level variable. We introduce the \textit{reduced energy}
\[
\mathcal E_{p,\theta}(v;a,b)
:=
\int_a^b |v'(t)|^p A_{p,\theta}(t)\,\mathrm{d}t,
\]
defined on the natural class $W^{1,1}(a,b)$, with values in $[0,+\infty]$, and the \textit{reduced quantity} associated with the phase,
\[
\operatorname{cap}_{p,\theta}(a,b)
:=
\inf
\left\{
\mathcal E_{p,\theta}(v;a,b):
v\in W^{1,1}(a,b),\
v(a)=0,\
v(b)=1,\
\mathcal E_{p,\theta}(v;a,b)<\infty
\right\}.
\]
The fibered class built from Lipschitz profiles continues to play a central role in the passage to the full geometric capacity, but the quantity $\operatorname{cap}_{p,\theta}(a,b)$ is formulated intrinsically as a one-dimensional variational problem in the level variable.

The following result shows that, within the fibered class, the energy is expressed completely in terms of the weight $A_{p,\theta}$, which leads to an explicit formula for $\operatorname{cap}_{p,\theta}(a,b)$, to the identification of the optimal profile, and to its basic structural properties.

\begin{theorem}[Structure of the reduced problem]\label{thm:estructura-problema-reducido}
	Let $\theta:\overline\Omega\to\mathbb R$ be a Lipschitz phase on $\overline\Omega$, and let $a<b$ be admissible levels for the phase in the sense of Definition~\ref{def:niveles-admisibles-fase}. Then the fibered restriction $u=v\circ\theta$ reduces the variational problem to a one-dimensional functional governed by the energy weight $A_{p,\theta}$. In particular,
	\[
	\operatorname{cap}_{p,\theta}(a,b)
	=
	\left(
	\int_a^b A_{p,\theta}(t)^{-\frac{1}{p-1}}\,\mathrm{d}t
	\right)^{1-p},
	\]
	with the convention that the right-hand side equals $0$ when
	\[
	\int_a^b A_{p,\theta}(t)^{-\frac{1}{p-1}}\,\mathrm{d}t=+\infty.
	\]
	Moreover, if
	\[
	\int_a^b A_{p,\theta}(t)^{-\frac{1}{p-1}}\,\mathrm{d}t<\infty,
	\]
	then the reduced problem admits an explicit minimizer, given by
	\[
	v_*(t)=
	\frac{\displaystyle\int_a^t A_{p,\theta}(\tau)^{-\frac{1}{p-1}}\,\mathrm{d}\tau}
	{\displaystyle\int_a^b A_{p,\theta}(\tau)^{-\frac{1}{p-1}}\,\mathrm{d}\tau},
	\]
	and the minimum value coincides with
	\[
	\operatorname{cap}_{p,\theta}(a,b)
	=
	\left(
	\int_a^b A_{p,\theta}(t)^{-\frac{1}{p-1}}\,\mathrm{d}t
	\right)^{1-p}.
	\]
\end{theorem}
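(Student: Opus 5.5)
The plan has two parts. First, for a Lipschitz profile $v$, the coarea formula identifies the energy of $v\circ\theta$ with $\mathcal E_{p,\theta}(v;a,b)$, which is the content of the phrase ``the fibered restriction reduces the variational problem''. Second, the explicit formula for $\operatorname{cap}_{p,\theta}(a,b)$ comes from Hölder's inequality in the level variable, with equality attained by $v_*$.

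For the first part, let $u=v\circ\theta$ with $v$ Lipschitz, constant outside $(a,b)$. Then $\nabla u=v'(\theta)\nabla\theta$ a.e. This holds because, on $\{\theta\in N\}$ with $|N|=0$, we have $\nabla\theta=0$ a.e. by the standard property of Sobolev functions on preimages of null sets. Applying the coarea formula to $g=|v'(\theta)|^p|\nabla\theta|^{p-1}$ gives
\[
\int_\Omega |\nabla u|^p\,\mathrm{d}x=\int_\Omega |v'(\theta)|^p|\nabla\theta|^{p-1}|\nabla\theta|\,\mathrm{d}x=\int_{\mathbb R}|v'(t)|^p A_{p,\theta}(t)\,\mathrm{d}t .
\]
Since $v'=0$ off $(a,b)$, the right-hand side equals $\mathcal E_{p,\theta}(v;a,b)$. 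I will also record that $A_{p,\theta}$ is measurable, again by coarea. Admissibility of levels (Definition~\ref{def:niveles-admisibles-fase}) is used only to ensure that $E_a,F_b$ form an interior condenser, and is not needed for the one-dimensional identity.

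For the lower bound, set $w=A_{p,\theta}^{-1/(p-1)}$ and $I=\int_a^b w\,\mathrm{d}t$. Take $v\in W^{1,1}(a,b)$ with $v(a)=0$, $v(b)=1$ and finite energy. Then $A_{p,\theta}>0$ a.e. on $\{v'\neq0\}$; otherwise that set contributes nothing, and I must handle the convention $w=+\infty$ where $A_{p,\theta}=0$. By absolute continuity and Hölder with exponents $p$ and $p'$,
\[
1=\int_a^b v'\,\mathrm{d}t\le\int_{\{v'\ne0\}}|v'|A_{p,\theta}^{1/p}\,A_{p,\theta}^{-1/p}\,\mathrm{d}t\le \mathcal E_{p,\theta}(v)^{1/p}\Bigl(\int_a^b w\,\mathrm{d}t\Bigr)^{(p-1)/p}.
\]
This gives $\mathcal E_{p,\theta}(v)\ge I^{1-p}$, which is a tautology when $I=\infty$.

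For the upper bound with $I<\infty$, the function $w$ is integrable, so $v_*$ is absolutely continuous, lies in $W^{1,1}(a,b)$, and satisfies $v_*(a)=0$, $v_*(b)=1$. Moreover $|v_*'|^pA_{p,\theta}=I^{-p}w^pA_{p,\theta}=I^{-p}w$ a.e., since $w^{p-1}A_{p,\theta}=1$ wherever $w<\infty$. Integrating gives the value $I^{1-p}$, so the bound is attained.

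When $I=\infty$, I need competitors with energy tending to $0$, and this is the step I expect to be the main obstacle. I will truncate with $w_k=\min(w,k)$ and take $v_k=\int_a^t w_k\big/\int_a^b w_k$. Then
\[
|v_k'|^pA_{p,\theta}=\frac{w_k^p A_{p,\theta}}{\bigl(\int_a^b w_k\bigr)^p}\le \frac{w_k}{\bigl(\int_a^b w_k\bigr)^{p}},
\]
because $w_k^{p-1}A_{p,\theta}\le w^{p-1}A_{p,\theta}\le1$. On $\{A_{p,\theta}=0\}$ the left side is $0$, so the bound holds there too. Hence $\mathcal E_{p,\theta}(v_k)\le(\int_a^b w_k)^{1-p}\to0$ by monotone convergence. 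This settles the convention, and the delicate points are exactly the null set $\{A_{p,\theta}=0\}$ and the bookkeeping with $+\infty$.
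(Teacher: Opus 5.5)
Your proof is correct and follows essentially the paper's route: the coarea identity for Lipschitz profiles, H\"older in the level variable for the lower bound, and the truncations $\min\{w,k\}$ when $I=\infty$; the only difference is that you test $v_*$ directly when $I<\infty$, whereas the paper first passes through the truncated profiles. One small correction: finite energy does not force $A_{p,\theta}>0$ a.e.\ on $\{v'\neq 0\}$, but this is harmless, since $|\{A_{p,\theta}=0\}|>0$ already gives $I=\infty$, so H\"older need only be applied when $A_{p,\theta}>0$ a.e.
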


The second problem is to relate the reduced quantity to the \textit{full geometric capacity} $\operatorname{Cap}_{p,\Omega}(E_a,F_b)$ of the condenser induced by the phase. The restriction to fibered potentials not only defines an effective one-dimensional model, but also yields, without additional hypotheses, an upper bound for the original problem. The following result formalizes this basic comparison.

\begin{theorem}[Upper bound by fibered restriction]\label{thm:cota-superior-fibrada}
	Let $\theta:\overline\Omega\to\mathbb R$ be a Lipschitz phase on $\overline\Omega$, and let $a<b$ be admissible levels for the phase in the sense of Definition~\ref{def:niveles-admisibles-fase}. Then, for the associated interior condenser $(E_a, F_b)$, one has
	\begin{equation}\label{eq:cota-superior-fibrada-intro}
	\operatorname{Cap}_{p,\Omega}(E_a,F_b)
	\le
	\operatorname{cap}_{p,\theta}(a,b).
	\end{equation}
\end{theorem}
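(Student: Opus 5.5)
The approach is to show that every admissible profile for the reduced problem, or at least a dense subclass of them, produces a competitor for $\operatorname{Cap}_{p,\Omega}(E_a,F_b)$ whose Dirichlet energy equals its reduced energy. If $\operatorname{cap}_{p,\theta}(a,b)=+\infty$ (no admissible profile) the claim is trivial, so fix $v\in W^{1,1}(a,b)$ with $v(a)=0$, $v(b)=1$ and $\mathcal E_{p,\theta}(v;a,b)<\infty$. First truncate: replace $v$ by $\min(\max(v,0),1)$. This keeps the boundary values and does not increase $|v'|$ a.e., so it does not increase the reduced energy. Then extend $v$ by $0$ on $(-\infty,a]$ and by $1$ on $[b,\infty)$, and set $u=v\circ\theta$. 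On $E_a$ we have $\theta\le a$, so $u=0$ there; on $F_b$ we have $\theta\ge b$, so $u=1$. These hold pointwise once $u$ is given its continuous representative, since $v$ is absolutely continuous and $\theta$ is Lipschitz. Hence $u$ satisfies the constraints everywhere, and a fortiori q.e.

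The key step is to show that $u\in W^{1,p}(\Omega)$ with
\[
\int_\Omega|\nabla u|^p\,\mathrm dx=\mathcal E_{p,\theta}(v;a,b).
\]
I would first treat Lipschitz $v$. Then the chain rule gives $\nabla u=v'(\theta)\nabla\theta$ a.e., with the usual convention that $\nabla u=0$ a.e. on $\{\nabla\theta=0\}$ and on preimages of null sets where $v'$ is undefined. The coarea formula applied to the function $|v'(\theta)|^p|\nabla\theta|^{p-1}$ then gives
\[
\int_\Omega |v'(\theta)|^p|\nabla\theta|^{p}\,\mathrm dx=\int_{\mathbb R}|v'(t)|^p\int_{\Sigma_t\cap\Omega}|\nabla\theta|^{p-1}\,\mathrm d\mathcal H^{n-1}\,\mathrm dt=\int_a^b|v'|^pA_{p,\theta}\,\mathrm dt ,
\]
because $v'=0$ outside $(a,b)$. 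Since $\Omega$ is bounded and $u$ is bounded, $u\in L^p$, so $u\in\mathcal A_{E_a,F_b}(\Omega)$ and $\operatorname{Cap}_{p,\Omega}(E_a,F_b)\le\mathcal E_{p,\theta}(v;a,b)$.

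The main obstacle is passing from Lipschitz profiles to general $W^{1,1}$ profiles, because the chain rule for $v\circ\theta$ with $v$ merely absolutely continuous is delicate. I would avoid it by showing that the infimum over Lipschitz profiles equals $\operatorname{cap}_{p,\theta}(a,b)$, and there are two routes.

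The first route uses Theorem~\ref{thm:estructura-problema-reducido}. If $\int_a^b A^{-1/(p-1)}<\infty$, the minimizer $v_*$ has derivative proportional to $A^{-1/(p-1)}$, which may be unbounded. Approximate it by $v_k$ with $v_k'$ proportional to $\min(A,k)^{-1/(p-1)}\wedge k$ on the set where this is meaningful, suitably normalized. Monotone convergence then gives $\mathcal E(v_k)\to\operatorname{cap}$.

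The second route is more robust and handles the degenerate case $\int_a^b A^{-1/(p-1)}=\infty$ at the same time. Take any admissible $v$, cap its derivative as $v_k'=\mathrm{sign}(v')\min(|v'|,k)$, and renormalize so that $v_k(b)=1$. The normalizing constant $\int_a^b v_k'$ tends to $1$ by dominated convergence in $L^1$, while $|v_k'|^p\le|v'|^p$, so $\mathcal E(v_k)\to\mathcal E(v)$. Each $v_k$ is Lipschitz, so the first step applies to it.

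Taking the infimum over $v$ concludes the proof. The remaining points to check carefully are the measurability of $A_{p,\theta}$ and the a.e. chain-rule convention on $\{\nabla\theta=0\}$.
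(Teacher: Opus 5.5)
Your argument is correct, and its skeleton matches the paper's. For Lipschitz profiles, extension by $0$ and $1$, the Lipschitz chain rule and the coarea formula give an admissible $u=\widetilde v\circ\theta$ with $\int_\Omega|\nabla u|^p=\mathcal E_{p,\theta}(v;a,b)$ (Lemma~\ref{lem:extension-truncada-composicion} and Proposition~\ref{prop:identidad-rigurosa-energia-fibrada}). Everything then reduces to the density of Lipschitz profiles in the reduced problem.

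Your first route is the paper's Lemma~\ref{lem:aproximacion-lipschitz-problema-reducido}, with one difference in the truncation. The paper uses $T_k=\min\{A_{p,\theta}^{-1/(p-1)},k\}$, which is monotone in $k$ and satisfies $T_k^pA_{p,\theta}\le T_k$, so $\mathcal E_{p,\theta}(v_k;a,b)\le c_k^{1-p}$. Your $\min(A,k)^{-1/(p-1)}\wedge k$ is not monotone in $k$ and violates that bound on $\{A_{p,\theta}>k\}$. It can be salvaged because $A_{p,\theta}\in L^1$ by coarea, but there is no reason to prefer it. The paper then identifies $\lim c_k^{1-p}$ with $\operatorname{cap}_{p,\theta}(a,b)$ via the closed formula of Theorem~\ref{thm:estructura-problema-reducido}. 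So its proof of the upper bound rests on the H\"older lower bound and the explicit resistance.

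Your second route is genuinely different and more elementary. You cap $v'$ at height $k$ and renormalize by $w_k(b)=\int_a^b\operatorname{sign}(v')\min\{|v'|,k\}\,\mathrm{d}t\to1$. This approximates \emph{every} finite-energy admissible profile in energy, by dominated convergence with $|v_k'|^pA_{p,\theta}\le 2^p|v'|^pA_{p,\theta}$ once $w_k(b)\ge1/2$. As written, $|v_k'|^p\le|v'|^p$ omits the factor $w_k(b)^{-p}$, but the conclusion is unaffected.

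What each route buys: yours makes Theorem~\ref{thm:cota-superior-fibrada} logically independent of Theorem~\ref{thm:estructura-problema-reducido} and of any structure of $A_{p,\theta}$. The paper's route instead produces explicit near-optimal Lipschitz profiles tied to the resistance $R_{p,\theta}(a,b)$. Finally, your preliminary truncation of $v$ to $[0,1]$ is harmless but unnecessary, since the plate conditions only involve $\{\theta\le a\}$ and $\{\theta\ge b\}$.
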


This inequality turns $\operatorname{cap}_{p,\theta}(a,b)$ into an effective control of the full geometric capacity. From this point on, the problem is to estimate the reduced quantity in terms of the geometry of the phase. In this direction, we show that two-sided bounds on $|\nabla\theta|$ and on the size of the fibers translate into two-sided estimates for the energy weight $A_{p,\theta}$, and consequently for $\operatorname{cap}_{p,\theta}(a,b)$. In this way one obtains explicit criteria for the reduced cost and, through the preceding inequality, corresponding bounds for the full geometric capacity.

More precisely, when the phase satisfies an \textit{eikonal structure}
\begin{equation}\label{eq:eikonal-eq}
	|\nabla\theta(x)|=\Gamma(\theta(x)),
\end{equation}
the weight factorizes as
\[
A_{p,\theta}(t)=\Gamma(t)^{p-1}S_\theta(t),
\qquad
S_\theta(t):=\mathcal H^{n-1}(\Sigma_t\cap\Omega),
\]
and the reduction is governed by the interaction between the gradient law $\Gamma$ and the fiber size $S_\theta$. In this sense, eikonal design provides a geometric principle that makes it possible to describe the reduced cost explicitly.

A natural problem, which lies beyond the scope of the present work, is the optimization of the fibered bound with respect to the phase. More precisely, once an admissible class $\mathcal F$ of phases is fixed, one is led to the design problem
\[
\inf_{\theta\in\mathcal F}\operatorname{cap}_{p,\theta}(a,b)
=
\inf_{\theta\in\mathcal F}
\left(
\int_a^b A_{p,\theta}(t)^{-1/(p-1)}\,\mathrm{d}t
\right)^{1-p},
\]
or equivalently the maximization of the associated reduced resistance. In this context, the present manuscript identifies the energy weight $A_{p,\theta}$ as the relevant quantity for formulating a variational phase-design problem.

Within the preceding variational framework, the article also analyzes the local effect of critical levels of $\theta$ on the integrability of the reduced resistance.

The following result identifies the local threshold governing the integrability of the reduced resistance near a critical level, in terms of the degeneracy of the gradient and the geometric collapse of the fibers. More precisely, for fixed parameters $\alpha,\nu\geq 0$, this threshold appears when, near a critical level $t_0$, one postulates a local profile of the form
\begin{equation}\label{eq:perfil-local-gradiente-intro}
	|\nabla\theta(x)|\asymp |\theta(x)-t_0|^\alpha,
	\qquad
	S_\theta(t)\asymp |t-t_0|^\nu.
\end{equation}
Consequently, by the definition of the energy weight \eqref{eq:peso-energetico-intro},
\begin{equation}\label{eq:perfil-local-peso-intro}
	A_{p,\theta}(t)\asymp |t-t_0|^{\alpha(p-1)+\nu}.
\end{equation}
Therefore,
\[
A_{p,\theta}(t)^{-1/(p-1)}
\asymp
|t-t_0|^{-\alpha-\nu/(p-1)},
\]
so that the local integrability of the reduced resistance is governed by the threshold
\begin{equation}\label{eq:umbral-local-introduccion}
	\alpha+\frac{\nu}{p-1}<1.
\end{equation}
The condition \eqref{eq:umbral-local-introduccion} determines the local integrability regime of the reduced resistance near $t_0$. Under a two-sided hypothesis on the energy weight on a one-sided interval $(t_0,t_0+\delta_0)$, one obtains the distinction between the transmissive regime and the supercritical regime for the reduced quantity associated with those levels.

\begin{theorem}[Local transmissibility threshold]
\label{thm:umbral-local-transmisibilidad}
Let $t_0\in\mathbb R$. Assume that there exist $\delta_0>0$, constants $c_1,c_2>0$, and parameters $\alpha\in[0,1)$, $\nu\ge 0$ such that
\[
c_1 (t-t_0)^{\alpha(p-1)+\nu}
\le
A_{p,\theta}(t)
\le
c_2 (t-t_0)^{\alpha(p-1)+\nu}
\]
for almost every $t\in (t_0,t_0+\delta_0)$. Then, for every $\delta\in(0,\delta_0)$, one has
\[
\int_{t_0}^{t_0+\delta} A_{p,\theta}(t)^{-1/(p-1)}\,\mathrm{d}t<\infty
\quad\Longleftrightarrow\quad
\alpha+\frac{\nu}{p-1}<1.
\]
In particular, if
\[
\alpha+\frac{\nu}{p-1}\ge 1,
\]
then
\[
\operatorname{cap}_{p,\theta}(t_0,t_0+\delta)=0
\qquad
\text{for every }\delta\in(0,\delta_0).
\]
\end{theorem}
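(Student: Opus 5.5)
The two-sided hypothesis on $A_{p,\theta}$ transfers directly to the reduced resistance. Then Theorem~\ref{thm:estructura-problema-reducido}, applied to the levels $a=t_0$ and $b=t_0+\delta$, handles the capacity statement.

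\emph{Step 1: comparison of integrands.} Set $\gamma:=\alpha+\nu/(p-1)$. Since $p>1$, the map $s\mapsto s^{-1/(p-1)}$ is decreasing on $(0,\infty)$. Raising the hypothesis to the power $-1/(p-1)$ therefore gives, for a.e.\ $t\in(t_0,t_0+\delta_0)$,
\[
c_2^{-\frac{1}{p-1}}(t-t_0)^{-\gamma}
\le
A_{p,\theta}(t)^{-\frac{1}{p-1}}
\le
c_1^{-\frac{1}{p-1}}(t-t_0)^{-\gamma}.
\]
Here we use the identity $\bigl((t-t_0)^{\alpha(p-1)+\nu}\bigr)^{-1/(p-1)}=(t-t_0)^{-\gamma}$. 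The hypothesis also implies $A_{p,\theta}>0$ a.e.\ on this interval, so the negative power is well defined a.e.

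\emph{Step 2: the elementary integral.} For $\delta\in(0,\delta_0)$, integrate the two-sided bound over $(t_0,t_0+\delta)$. All integrands are nonnegative and measurable, so monotonicity of the Lebesgue integral applies even when the value is $+\infty$. This gives
\[
\int_{t_0}^{t_0+\delta}A_{p,\theta}(t)^{-\frac{1}{p-1}}\,\mathrm{d}t<\infty
\iff
\int_0^\delta s^{-\gamma}\,\mathrm{d}s<\infty .
\]
The right-hand integral is finite exactly when $\gamma<1$, where it equals $\delta^{1-\gamma}/(1-\gamma)$, and it is $+\infty$ when $\gamma\ge1$. This proves the equivalence.

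\emph{Step 3: vanishing of the reduced capacity.} Suppose $\gamma\ge1$. By Step 2 the resistance $\int_{t_0}^{t_0+\delta}A_{p,\theta}^{-1/(p-1)}$ is infinite for every $\delta\in(0,\delta_0)$. The formula in Theorem~\ref{thm:estructura-problema-reducido}, with its convention for infinite resistance, then yields $\operatorname{cap}_{p,\theta}(t_0,t_0+\delta)=0$.

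\emph{Main obstacle.} This last step assumes that the pair $(t_0,t_0+\delta)$ satisfies the admissibility requirements of Definition~\ref{def:niveles-admisibles-fase}, so that Theorem~\ref{thm:estructura-problema-reducido} applies. If it does not, I would argue directly from the definition of $\operatorname{cap}_{p,\theta}$.

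Use the truncated resistances $R_\varepsilon:=\int_{t_0+\varepsilon}^{t_0+\delta}A_{p,\theta}^{-1/(p-1)}$, which are finite by the upper comparison in Step 1 and tend to $+\infty$ as $\varepsilon\to0$. Take the profile $v_\varepsilon$ equal to $0$ on $(t_0,t_0+\varepsilon)$ and to the normalized primitive of $A_{p,\theta}^{-1/(p-1)}$ on $(t_0+\varepsilon,t_0+\delta)$. This profile is Lipschitz, since its derivative is bounded by $c_2^{-1/(p-1)}\varepsilon^{-\gamma}/R_\varepsilon$, and it satisfies $v_\varepsilon(t_0)=0$ and $v_\varepsilon(t_0+\delta)=1$.

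On $(t_0+\varepsilon,t_0+\delta)$ we have $|v_\varepsilon'|^pA_{p,\theta}=A_{p,\theta}^{-p/(p-1)+1}/R_\varepsilon^p=A_{p,\theta}^{-1/(p-1)}/R_\varepsilon^p$, and the energy vanishes on $(t_0,t_0+\varepsilon)$. Hence $\mathcal E_{p,\theta}(v_\varepsilon)=R_\varepsilon^{1-p}$. This is admissible, and it tends to $0$ as $\varepsilon\to0$, which again gives $\operatorname{cap}_{p,\theta}(t_0,t_0+\delta)=0$.
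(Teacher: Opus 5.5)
Your proposal is correct and matches the paper's proof: the two-sided comparison of $A_{p,\theta}(t)^{-1/(p-1)}$ with $(t-t_0)^{-\gamma}$, then the elementary power integral, then the exact formula of Theorem~\ref{thm:estructura-problema-reducido} (via Lemma~\ref{lem:regimen-supercritico-capacidad-cero}) to get $\operatorname{cap}_{p,\theta}(t_0,t_0+\delta)=0$. Your cutoff fallback for non-admissible levels is also valid, with one typo: the derivative bound should use $c_1^{-1/(p-1)}$, not $c_2^{-1/(p-1)}$. The fallback is not really needed, though, because the paper's derivation of that formula, including the truncation $\min\{B,k\}$ used when the resistance is infinite, is purely one-dimensional and never uses admissibility of the levels.
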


In the supercritical regime, the divergence of the reduced resistance implies
\[
\operatorname{cap}_{p,\theta}(t_0,t_0+\delta)=0
\]
for every $\delta\in(0,\delta_0)$. If, in addition, $(t_0,t_0+\delta)$ is a pair of admissible levels for the phase, then Theorem~\ref{thm:cota-superior-fibrada} yields
\[
\operatorname{Cap}_{p,\Omega}\bigl(E_{t_0},F_{t_0+\delta}\bigr)=0.
\]
In particular, the vanishing of the reduced model is transmitted to the full geometric capacity whenever the condenser induced by those levels is well defined.

Section~\ref{sec:ejemplos} also contains an explicit family of degenerate phases in which this mechanism can be verified directly. In the monomial model one takes
\[
\Omega=(-L,L)\times D\subset\mathbb{R}^n,
\qquad
\theta(x)=|x_1|^\gamma,
\qquad \gamma>1,
\]
where $D\subset\mathbb{R}^{n-1}$ is a bounded open set. In this case, for $t>0$ close to $0$, the fibers $\Sigma_t$ consist of two planar sections,
\[
S_\theta(t)\asymp 1,
\qquad
|\nabla\theta|\asymp t^{\frac{\gamma-1}{\gamma}}
\quad\text{on }\Sigma_t,
\]
and therefore
\[
A_{p,\theta}(t)\asymp t^{\frac{(\gamma-1)(p-1)}{\gamma}}.
\]
Thus, the preceding criterion can be checked explicitly in this example.

In general, the fibered reduction does not coincide with the full geometric problem; see \eqref{eq:cota-superior-fibrada-intro}. The final examples clarify the scope of the preceding results. On the one hand, they exhibit explicit models compatible with the geometry of a fixed phase in which the fibered reduction loses no information. On the other hand, they show, in the linear case $p=2$, a quantitative tangential obstruction to that exactness outside that regime.

The paper is organized as follows. In Section~\ref{sec:setup-capacidad-fase} we fix the variational formulation of the relative capacity, of condensers induced by a phase, and of the associated energy weight. In Section~\ref{sec:reduccion-1d-capacidad} we develop the reduced problem and prove Theorem~\ref{thm:estructura-problema-reducido}. In Section~\ref{sec:comparabilidad-geometrica} we establish the upper bound by fibered restriction and prove Theorem~\ref{thm:cota-superior-fibrada}; in addition, we derive estimation criteria for the energy weight. In Section~\ref{sec:regimenes-criticos-theta} we study the local behavior of the energy weight near a given level, analyze the local integrability of the reduced resistance, and prove Theorem~\ref{thm:umbral-local-transmisibilidad}. Finally, in Section~\ref{sec:ejemplos} we present explicit models of exactness of the fibered reduction in symmetric cases and a quantitative tangential obstruction to fibered exactness in the linear case.

Throughout the paper, the notation
\[
f(t)\asymp g(t)
\qquad\text{as } t\to t_0
\]
means that there exist constants $c,C>0$ and a neighborhood of $t_0$ such that
\[
c\,g(t)\le f(t)\le C\,g(t)
\]
for every $t$ in that neighborhood. In particular, the notation $f\propto g$ will be reserved to indicate exact proportionality, that is, the existence of a constant $C>0$ such that $f=Cg$.
\section{Geometric setup and capacity between plates}
\label{sec:setup-capacidad-fase}

We work in a bounded open set $\Omega\subset\mathbb{R}^n$, with fixed $1<p<\infty$. Unless explicitly stated otherwise, the phase $\theta:\overline\Omega\to\mathbb{R}$ will be assumed to be Lipschitz on $\overline\Omega$. In particular, its restriction to $\Omega$ belongs to $W^{1,\infty}(\Omega)$. Whenever we use a classical regime, for example for pointwise identities or differential computations on levels, we will indicate this locally and explicitly, typically by means of a hypothesis of the form $\theta\in C^2(U)$ in the relevant region $U\subset\Omega$. With this convention, we fix the geometric notation associated with $\theta$ and recall the coarea formula in the form that we will use throughout the paper; see, for example, \cite{EvansGariepy2015}. For each $t\in\mathbb{R}$ we write
\[
\Sigma_t:=\{x\in\Omega:\theta(x)=t\},
\qquad
S_\theta(t):=\mathcal H^{n-1}(\Sigma_t\cap\Omega).
\]
Then, for every nonnegative measurable function $g$,
\begin{equation}
	\label{eq:coarea-capacidad}
	\int_\Omega g(x)\,|\nabla\theta(x)|\,\mathrm{d}x
	=
	\int_{\mathbb{R}}
	\left(
	\int_{\Sigma_t\cap\Omega} g\,\mathrm{d}\mathcal H^{n-1}
	\right)\mathrm{d}t.
\end{equation}

For the fibered reduction, we consider not only the geometric size of the fiber $S_\theta(t)$, but also the effective energy weight induced by the phase.

\begin{definition}[Energy weight]
	\label{def:peso-energetico}
	For almost every $t\in\mathbb{R}$ we define
	\[
	A_{p,\theta}(t)
	:=
	\int_{\Sigma_t\cap\Omega}
	|\nabla\theta|^{p-1}\,\mathrm{d}\mathcal H^{n-1}.
	\]
\end{definition}

Although a first coarea reduction in the level variable suggests the Lebesgue pushforward weight by $\theta$, whose formal density is given by
\[
w_\theta(t)
:=
\int_{\Sigma_t\cap\Omega}\frac{1}{|\nabla\theta|}\,\mathrm{d}\mathcal H^{n-1},
\]
the $p$-Dirichlet energy naturally induces the effective energy weight $A_{p,\theta}$. Consequently, the reduced problem is governed by $A_{p,\theta}$.

\subsection{Interior condensers}
\label{subsec:condensadores-interiores}
We start from an interior condenser formed by two disjoint subsets of $\Omega$; the associated variational quantity is the relative $p$-capacity in $\Omega$.

\begin{definition}[Interior condenser]
	\label{def:condensador-interior}
	Let $E,F\subset \Omega$. We shall say that $(E,F)$ is an \emph{interior condenser in $\Omega$} if
	\[
	\overline E\subset \Omega,\qquad \overline F\subset \Omega,\qquad \overline E\cap \overline F=\varnothing.
	\]
\end{definition}

This hypothesis ensures that the two plates are strictly separated inside $\Omega$, so that the variational problem between the values $0$ and $1$ is posed without interaction with the outer boundary $\partial\Omega$.

\subsection{Relative $p$-capacity}
\label{subsec:capacidad-relativa}

\begin{definition}[Admissible class]
	\label{def:clase-admisible-capacidad}
	Let $(E,F)$ be an interior condenser in $\Omega$. We define
	\[
	\mathcal A_{E,F}(\Omega)
	:=
	\bigl\{
	u\in W^{1,p}(\Omega):\
	u\le 0 \text{ q.e. on } E,\
	u\ge 1 \text{ q.e. on } F
	\bigr\}.
	\]
\end{definition}

Here and throughout, q.e.\ means quasi-everywhere with respect to the Sobolev $p$-capacity; see, for example, \cite{HeinonenKilpelainenMartio2006}.

\begin{definition}[Relative $p$-capacity]
	\label{def:capacidad-p-general}
	Let $(E,F)$ be an interior condenser in $\Omega$. We define
	\[
	\operatorname{Cap}_{p,\Omega}(E,F)
	:=
	\inf_{u\in \mathcal A_{E,F}(\Omega)}
	\int_\Omega |\nabla u(x)|^p\,\mathrm{d}x.
	\]
\end{definition}

This is the $n$-dimensional geometric quantity that we will compare with the one-dimensional reduced quantity associated with a fixed phase $\theta$.

\begin{remark}[Truncation of the admissible class]
	\label{rem:truncacion-clase-admisible}
	By truncation, the preceding infimum does not change if one restricts the admissible class to functions such that
	\[
	0\le u\le 1
	\qquad\text{a.e. in }\Omega.
	\]
	Indeed, if $u\in\mathcal A_{E,F}(\Omega)$ and we define
	\[
	T(u):=\min\{1,\max\{0,u\}\},
	\]
	then $T(u)\in\mathcal A_{E,F}(\Omega)$ and
	\[
	\int_\Omega |\nabla T(u)(x)|^p\,\mathrm{d}x
	\le
	\int_\Omega |\nabla u(x)|^p\,\mathrm{d}x.
	\]
\end{remark}

The quantity $\operatorname{Cap}_{p,\Omega}(E,F)$ belongs to the classical framework of variational condenser capacity. A general formulation of condenser capacity for two plates in metric spaces appears, for example, in \cite[Def. 3.6]{GibaraKorteShanmugalingam2024}. In the interior Euclidean regime considered here, this geometric problem is a specialization adapted to the fibered reduction by a fixed phase.

\subsection{Condensers induced by a phase}
\label{subsec:condensadores-por-fase}

Let $\theta:\overline\Omega\to\mathbb R$ be a Lipschitz phase on $\overline\Omega$. For $a<b$, we introduce the subsets
\begin{equation}
	\label{eq:def-Ea-Fb}
	E_a:=\{x\in \Omega \, : \theta\le a\},
	\qquad
	F_b:=\{x\in \Omega \, :\theta\ge b\}.
\end{equation}

\begin{definition}[Admissible levels for a phase]
	\label{def:niveles-admisibles-fase}
Let $\theta:\overline\Omega\to\mathbb R$ be a Lipschitz phase on $\overline\Omega$, and let $a<b$. We say that $(a,b)$ is a pair of \emph{admissible levels} for the phase $\theta$ if
	\[
	a<\theta(x)<b
	\qquad
	\text{for every }x\in\partial\Omega,
	\]
	and moreover
	\[
	E_a\neq\varnothing,
	\qquad
	F_b\neq\varnothing.
	\]
\end{definition}

\begin{proposition}
	\label{prop:niveles-admisibles-implican-condensador}
	If $(a,b)$ is a pair of admissible levels for $\theta$, then $(E_a,F_b)$ is an interior condenser in $\Omega$.
\end{proposition}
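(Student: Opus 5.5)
Since $\theta$ is Lipschitz on $\overline\Omega$, it is continuous on the compact set $\overline\Omega$. Set
\[
\widetilde E_a:=\{x\in\overline\Omega:\theta(x)\le a\},\qquad \widetilde F_b:=\{x\in\overline\Omega:\theta(x)\ge b\}.
\]
By continuity, both sets are closed subsets of $\overline\Omega$, hence compact. Since $E_a\subset\widetilde E_a$ and $\widetilde E_a$ is closed, we get $\overline{E_a}\subset\widetilde E_a$; similarly $\overline{F_b}\subset\widetilde F_b$. (Here closures are taken in $\mathbb R^n$, and they lie in $\overline\Omega$.)

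\emph{Interior inclusion.} Let $x\in\overline{E_a}$. Then $x\in\overline\Omega$ and $\theta(x)\le a$. By Definition~\ref{def:niveles-admisibles-fase}, every boundary point satisfies $\theta>a$, so $x\notin\partial\Omega$. Hence $x\in\Omega$, and therefore $\overline{E_a}\subset\Omega$. The same argument with $\theta\ge b$ and the inequality $\theta<b$ on $\partial\Omega$ gives $\overline{F_b}\subset\Omega$.

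\emph{Disjointness.} If $x\in\overline{E_a}\cap\overline{F_b}$, then $\theta(x)\le a<b\le\theta(x)$, which is a contradiction. Hence $\overline{E_a}\cap\overline{F_b}=\varnothing$.

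The nonemptiness assumptions are not needed for Definition~\ref{def:condensador-interior}. They only ensure that the condenser is nondegenerate. The only delicate point is the use of continuity of $\theta$ up to $\partial\Omega$, which is what allows the boundary condition to keep the closures of the plates away from $\partial\Omega$. This is guaranteed by the Lipschitz hypothesis on $\overline\Omega$.
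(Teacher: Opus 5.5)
Your proof is correct and follows essentially the same route as the paper: continuity of $\theta$ on $\overline\Omega$ gives $\overline{E_a}\subset\{\theta\le a\}$ and $\overline{F_b}\subset\{\theta\ge b\}$, the condition $a<\theta<b$ on $\partial\Omega$ keeps both closures off $\partial\Omega$, and $a<b$ makes the closures disjoint. You are also right that nonemptiness plays no role in Definition~\ref{def:condensador-interior}; the paper mentions it in its last step, but it is not needed there.
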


\begin{proof}
	Since $\theta$ is continuous on $\overline\Omega$, the sets
	\[
	E_a=\Omega\cap\theta^{-1}((-\infty,a]),
	\qquad
	F_b=\Omega\cap\theta^{-1}([b,\infty))
	\]
	are relatively closed in $\Omega$. The condition
	\[
	a<\theta(x)<b
	\qquad
	\text{for every }x\in\partial\Omega
	\]
	implies that no point of $\partial\Omega$ belongs to the closures of $E_a$ and $F_b$. Consequently,
	\[
	\overline{E_a}\subset\Omega,
	\qquad
	\overline{F_b}\subset\Omega.
	\]
	Moreover, since $a<b$, one has
	\[
	E_a\cap F_b=\varnothing.
	\]
	Since $E_a\subset\{\theta\le a\}$ and $F_b\subset\{\theta\ge b\}$, continuity of $\theta$ on $\overline\Omega$ also yields
	\[
	\overline{E_a}\subset\{\theta\le a\},
	\qquad
	\overline{F_b}\subset\{\theta\ge b\},
	\]
	and therefore
	\[
	\overline{E_a}\cap\overline{F_b}=\varnothing.
	\]
	Since both sets are nonempty by hypothesis, we conclude that $(E_a,F_b)$ is an interior condenser in $\Omega$.
\end{proof}

For such admissible pairs, we introduce the relative geometric capacity
\[
\operatorname{Cap}_{p,\Omega}(E_a,F_b).
\]

\subsection{Reduction by levels}

The reduction arises by restricting the admissible class to functions fibered by a fixed phase,
\[
u=v\circ\theta.
\]
In that case, the $n$-dimensional energy induces a one-dimensional functional in the level variable, governed by the energy weight $A_{p,\theta}(t)$. The next section develops this reduced formulation.
\section{One-dimensional reduction and proof of Theorem~\ref{thm:estructura-problema-reducido}}
\label{sec:reduccion-1d-capacidad}

In this section we develop the reduced problem associated with a fixed phase $\theta\in W^{1,\infty}(\Omega)$ and prove Theorem~\ref{thm:estructura-problema-reducido}. We start from the fibered ansatz $u=v\circ\theta$, identify the reduced energy governed by $A_{p,\theta}$, obtain an explicit formula for $\operatorname{cap}_{p,\theta}(a,b)$, and construct an explicit optimal profile.

\subsection{Fibered ansatz}
\label{subsec:ansatz-fibrado}

Let $\theta:\Omega\to\mathbb R$ be a phase in $W^{1,\infty}(\Omega)$, and let $a<b$ be admissible levels. Recall the notation
\[
E_a:=\{\theta\le a\},
\qquad
F_b:=\{\theta\ge b\}.
\]
We consider the subclass of admissible functions of the form
\begin{equation}
	\label{eq:ansatz-fibrado}
	u(x)=v(\theta(x)),
\end{equation}
where $v:\mathbb R\to\mathbb R$ is a scalar profile. If, in addition,
\[
v(t)=0 \quad \text{for } t\le a,
\qquad
v(t)=1 \quad \text{for } t\ge b,
\]
then $u=0$ on $E_a$ and $u=1$ on $F_b$. Consequently, $u$ satisfies the plate conditions of the condenser induced by the phase.

\subsection{Reduced energy}
\label{subsec:energia-reducida}

If $u=v\circ\theta$ is sufficiently regular, the chain rule and the coarea formula \eqref{eq:coarea-capacidad} lead to the formal identity
\begin{equation}
	\label{eq:energia-reducida-formal}
	\int_\Omega |\nabla(v\circ\theta)(x)|^p\,\mathrm{d}x
	=
	\int_{\mathbb R}
	|v'(t)|^p
	\left(
	\int_{\Sigma_t\cap\Omega}
	|\nabla\theta|^{p-1}\,\mathrm{d}\mathcal H^{n-1}
	\right)\mathrm{d}t
	=
	\int_{\mathbb R}|v'(t)|^p A_{p,\theta}(t)\,\mathrm{d}t.
\end{equation}
This leads us to introduce the following reduced functional.

\begin{definition}[Reduced functional]
	\label{def:funcional-reducido}
	For $a<b$ and for $v\in W^{1,1}(a,b)$, we define
	\[
	\mathcal{E}_{p,\theta}(v;a,b)
	:=
	\int_a^b |v'(t)|^p\,A_{p,\theta}(t)\,\mathrm{d}t,
	\]
	with values in $[0,+\infty]$.
\end{definition}

In one dimension, the class $W^{1,1}(a,b)$ coincides with that of absolutely continuous functions on $[a,b]$ modulo equality almost everywhere. Therefore, if $v\in W^{1,1}(a,b)$, the boundary values $v(a)$ and $v(b)$ are well defined for the absolutely continuous representative, and moreover one has
\[
v(b)-v(a)=\int_a^b v'(t)\,\mathrm{d}t.
\]

For the comparison with the geometric capacity, we will use Lipschitz profiles, since these allow one to construct fibered admissible functions in $\Omega$. However, the reduced variational quantity is defined on the natural class $W^{1,1}$ with finite energy.

\begin{lemma}[Truncated extension and Lipschitz composition]
	\label{lem:extension-truncada-composicion}
	Let $\theta\in W^{1,\infty}(\Omega)$ and let $v:[a,b]\to\mathbb R$ be Lipschitz with
	\[
	v(a)=0,\qquad v(b)=1.
	\]
	Define
	\[
	\widetilde v(t)=
	\begin{cases}
		0, & t\le a,\\
		v(t), & a<t<b,\\
		1, & t\ge b.
	\end{cases}
	\]
	Then $\widetilde v$ is globally Lipschitz on $\mathbb R$, the composition
	\[
	u:=\widetilde v\circ\theta
	\]
	belongs to $W^{1,p}(\Omega)$ and satisfies
	\[
	\nabla u(x)=\widetilde v'(\theta(x))\,\nabla\theta(x)
	\qquad
	\text{for a.e. }x\in\Omega.
	\]
\end{lemma}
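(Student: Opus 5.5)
The proof has three parts: Lipschitz continuity of $\widetilde v$, membership of $u$ in $W^{1,p}(\Omega)$, and the chain rule, the last being the only delicate step.

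\emph{Lipschitz continuity of $\widetilde v$.} Let $L$ be the Lipschitz constant of $v$ on $[a,b]$. The function $\widetilde v$ is continuous, because $v(a)=0$ and $v(b)=1$ match the constant pieces. For $s<t$ one splits $[s,t]$ at the points $a,b$ that lie inside it. On each subinterval the increment of $\widetilde v$ is either $0$ or bounded by $L$ times the length, and summing gives $|\widetilde v(t)-\widetilde v(s)|\le L|t-s|$.

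\emph{Membership in $W^{1,p}(\Omega)$.} Since $\theta$ is Lipschitz on $\overline\Omega$, the composition $u=\widetilde v\circ\theta$ is Lipschitz on $\overline\Omega$ with constant at most $L\,\mathrm{Lip}(\theta)$, and $0\le u\le \max|v|$ is bounded. A bounded Lipschitz function on the bounded open set $\Omega$ lies in $W^{1,\infty}(\Omega)$, by Rademacher's theorem and the standard identification of Lipschitz functions with $W^{1,\infty}_{\rm loc}$ functions with bounded gradient. Because $|\Omega|<\infty$, it follows that $u\in W^{1,p}(\Omega)$ for every $p$.

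\emph{The chain rule.} This is the main obstacle, since $\widetilde v$ is only Lipschitz and $\widetilde v'$ may fail to exist on a null set $N\subset\mathbb R$ whose preimage $\theta^{-1}(N)$ need not be null. I would argue pointwise as follows.

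By Rademacher, for a.e. $x$ both $u$ and $\theta$ are differentiable at $x$. Fix such an $x$ and set $t=\theta(x)$.

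\begin{itemize}
\item If $\widetilde v$ is differentiable at $t$, the classical chain rule gives $\nabla u(x)=\widetilde v'(t)\nabla\theta(x)$.
\item Otherwise $t\in N$, where $N$ is the Lebesgue-null set of nondifferentiability points of $\widetilde v$. By the coarea formula \eqref{eq:coarea-capacidad} with $g=\mathbf 1_{\theta^{-1}(N)}$,
\[
\int_{\theta^{-1}(N)}|\nabla\theta|\,\mathrm{d}x=\int_N \mathcal H^{n-1}(\Sigma_t\cap\theta^{-1}(N))\,\mathrm{d}t=0 .
\]
Hence $\nabla\theta=0$ a.e. on $\theta^{-1}(N)$.
\end{itemize}

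It remains to show that $\nabla u=0$ a.e. there as well, so that the formula holds for any choice of value of $\widetilde v'$ on $N$. At a point $x$ with $\nabla\theta(x)=0$ where $u$ is differentiable, we have
\[
|u(y)-u(x)|\le L|\theta(y)-\theta(x)|=o(|y-x|),
\]
which forces $\nabla u(x)=0$.

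Combining the two cases gives $\nabla u(x)=\widetilde v'(\theta(x))\nabla\theta(x)$ for a.e. $x\in\Omega$, with an arbitrary (for instance zero) convention for $\widetilde v'$ on $N$.
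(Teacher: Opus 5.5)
Your argument is correct, and it takes a more self-contained route than the paper. The paper's proof is two lines: it observes that $\widetilde v$ is Lipschitz and then cites the general chain rule for Lipschitz functions composed with Sobolev functions (Evans--Gariepy; Heinonen--Koskela--Shanmugalingam--Tyson). That single citation gives both $u\in W^{1,p}(\Omega)$ and the gradient formula.

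You instead prove that chain rule directly, exploiting the fact that $\theta$ is itself Lipschitz. Rademacher's theorem applies to both $\theta$ and $u$, and the pointwise chain rule handles the points with $\theta(x)\notin N$. For the remaining points, the coarea formula \eqref{eq:coarea-capacidad} with $g=\mathbf 1_{\theta^{-1}(N)}$ gives $\nabla\theta=0$ a.e.\ on $\theta^{-1}(N)$, and then $\nabla u=0$ there by your $o(|y-x|)$ estimate.

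Your route buys transparency. It isolates exactly the delicate point: $\theta^{-1}(N)$ can have positive measure, for example a fat level set $\{\theta=a\}$, since $\widetilde v$ generally has a corner at $a$. It also shows explicitly that the formula does not depend on how $\widetilde v'$ is defined on $N$, which the statement leaves implicit. And it uses only the coarea formula the paper relies on anyway. The citation buys generality instead: the cited chain rule holds for $\theta$ merely Sobolev, where Rademacher's theorem is in general no longer available.

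Three small repairs are needed.
\begin{itemize}
\item The profile $v$ need not be nonnegative, so write $|u|\le\max_{[a,b]}|v|$ instead of $0\le u\le\max|v|$.
\item The lemma only assumes $\theta\in W^{1,\infty}(\Omega)$, which on an irregular $\Omega$ gives only local Lipschitz continuity. This is harmless: every step of your argument is local, and $|\nabla u|\le L\|\nabla\theta\|_{L^\infty(\Omega)}$ a.e.\ together with boundedness still yields $u\in W^{1,\infty}(\Omega)$. Under the paper's standing assumption that $\theta$ is Lipschitz on $\overline\Omega$, the point is moot.
\item For the measurability of $\theta^{-1}(N)$, note that $N$ is Borel, or replace it by a Borel null superset.
\end{itemize}
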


\begin{proof}
Since $v$ is Lipschitz on $[a,b]$, the truncated extension $\widetilde v$ is Lipschitz on all of $\mathbb R$. Applying the chain rule for Sobolev--Lipschitz compositions to $\theta\in W^{1,\infty}(\Omega)$, we obtain
	\[
	u=\widetilde v\circ\theta\in W^{1,p}(\Omega),
	\]
	and
	\[
	\nabla u(x)=\widetilde v'(\theta(x))\,\nabla\theta(x)
	\qquad
	\text{for a.e. }x\in\Omega;
	\]
	see, for example, \cite{EvansGariepy2015,HeinonenKoskelaShanmugalingamTyson2015}.
\end{proof}

\begin{proposition}[Rigorous fibered energy identity for Lipschitz profiles]
	\label{prop:identidad-rigurosa-energia-fibrada}
	Let $\theta\in W^{1,\infty}(\Omega)$ and let $v:[a,b]\to\mathbb R$ be Lipschitz with
	\[
	v(a)=0,\qquad v(b)=1.
	\]
	If $\widetilde v$ and $u=\widetilde v\circ\theta$ are as in Lemma~\ref{lem:extension-truncada-composicion}, then
	\begin{equation}
		\label{eq:identidad-rigurosa-energia-fibrada}
		\int_\Omega |\nabla u(x)|^p\,\mathrm{d}x
		=
		\int_a^b |v'(t)|^p\,A_{p,\theta}(t)\,\mathrm{d}t.
	\end{equation}
\end{proposition}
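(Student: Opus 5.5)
The plan is to start from Lemma~\ref{lem:extension-truncada-composicion}, which gives $\nabla u(x)=\widetilde v'(\theta(x))\nabla\theta(x)$ for a.e.\ $x\in\Omega$. Hence
\[
\int_\Omega |\nabla u|^p\,\mathrm{d}x=\int_\Omega |\widetilde v'(\theta(x))|^p\,|\nabla\theta(x)|^{p-1}\,|\nabla\theta(x)|\,\mathrm{d}x .
\]
I then apply the coarea formula \eqref{eq:coarea-capacidad} with the nonnegative function $g(x):=|\widetilde v'(\theta(x))|^p|\nabla\theta(x)|^{p-1}$. On $\Sigma_t$ the factor $|\widetilde v'(\theta)|^p$ is the constant $|\widetilde v'(t)|^p$, so it comes out of the inner integral. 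This leaves
\[
\int_{\mathbb R}|\widetilde v'(t)|^p A_{p,\theta}(t)\,\mathrm{d}t .
\]
Since $\widetilde v'=0$ on $(-\infty,a)\cup(b,\infty)$ and $\widetilde v'=v'$ on $(a,b)$, this equals the right-hand side of \eqref{eq:identidad-rigurosa-energia-fibrada}.

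The delicate points are measure-theoretic, and I would handle them in three steps.

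\textbf{Step 1: a Borel representative.} Fix a Borel representative of $\widetilde v'$ defined everywhere on $\mathbb R$. For instance, take the $\limsup$ of difference quotients, set to $0$ where it is infinite. This makes $g$ a nonnegative measurable function on $\Omega$, with $\nabla\theta$ a fixed Borel representative.

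\textbf{Step 2: changing $\widetilde v'$ on a null set of levels.} The main obstacle is that $\widetilde v'$ is only determined up to a Lebesgue null set $N\subset\mathbb R$, and the chain rule identity holds only a.e. in $x$. I need the choice of representative on $N$ to be irrelevant on both sides. On the one-dimensional side this is immediate. On the $n$-dimensional side I would apply coarea with $g=\mathbf 1_{\theta^{-1}(N)}|\nabla\theta|^{p-1}$. This gives $\int_{\theta^{-1}(N)}|\nabla\theta|^p\,\mathrm{d}x=0$, so $\nabla\theta=0$ a.e.\ on $\theta^{-1}(N)$, and hence both $\nabla u$ and $g$ vanish a.e.\ there. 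Any mismatch between representatives is therefore confined to a set where the integrand is zero.

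This step also covers the boundary levels $t=a$ and $t=b$, where $\widetilde v$ may fail to be differentiable: $\{a,b\}$ is a null set, so the same argument applies. It also shows that the chain rule formula is consistent with the standard convention $\nabla u=0$ a.e.\ on $\{\nabla\theta=0\}$.

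\textbf{Step 3: the definition of $A_{p,\theta}$.} $A_{p,\theta}(t)$ is only defined for a.e.\ $t$, but that suffices. The coarea formula guarantees that $t\mapsto\int_{\Sigma_t}g\,\mathrm{d}\mathcal H^{n-1}$ is measurable, and that $\Sigma_t$ is $\mathcal H^{n-1}$-$\sigma$-finite for a.e.\ $t$.

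Combining the three steps yields the identity, with both sides possibly equal to $+\infty$. In fact both are finite, since $u\in W^{1,p}(\Omega)$ and $\Omega$ is bounded.
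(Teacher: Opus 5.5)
Your proposal is correct and follows the same route as the paper: the chain rule from Lemma~\ref{lem:extension-truncada-composicion}, the coarea formula applied to $g=|\widetilde v'(\theta)|^p|\nabla\theta|^{p-1}$, and the observation that $\widetilde v'$ vanishes outside $(a,b)$ and equals $v'$ inside. Your Steps 1--3 (a Borel representative of $\widetilde v'$, coarea to show $\nabla\theta=0$ a.e.\ on $\theta^{-1}(N)$ for null $N$, and the a.e.\ definition of $A_{p,\theta}$) make explicit the measurability and representative-independence issues that the paper covers only with the phrase ``which is measurable and nonnegative.''
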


\begin{proof}
	By Lemma~\ref{lem:extension-truncada-composicion},
	\[
	|\nabla u(x)|^p
	=
	|\widetilde v'(\theta(x))|^p\,|\nabla\theta(x)|^p
	\qquad
	\text{for a.e. }x\in\Omega.
	\]
	We apply \eqref{eq:coarea-capacidad} with
	\[
	g(x):=|\widetilde v'(\theta(x))|^p\,|\nabla\theta(x)|^{p-1},
	\]
	which is measurable and nonnegative. Applying the coarea formula for Lipschitz functions \cite{EvansGariepy2015,FedererGMT}, we obtain
	\[
	\int_\Omega |\nabla u(x)|^p\,\mathrm{d}x
	=
	\int_{\mathbb R}
	|\widetilde v'(t)|^p
	\left(
	\int_{\Sigma_t\cap\Omega}|\nabla\theta|^{p-1}\,\mathrm{d}\mathcal H^{n-1}
	\right)\mathrm{d}t.
	\]
	Since $\widetilde v'(t)=0$ for almost every $t\notin(a,b)$ and $\widetilde v'(t)=v'(t)$ for almost every $t\in(a,b)$, we conclude that
	\[
	\int_\Omega |\nabla u(x)|^p\,\mathrm{d}x
	=
	\int_a^b |v'(t)|^p\,A_{p,\theta}(t)\,\mathrm{d}t.
	\]
\end{proof}

\subsection{Reduced capacity}
\label{subsec:capacidad-reducida}
	
\begin{definition}[Reduced capacity]
	\label{def:capacidad-reducida}
	For $a<b$, we define
	\[
	\operatorname{cap}_{p,\theta}(a,b)
	:=
	\inf
	\left\{
	\mathcal{E}_{p,\theta}(v;a,b):
	v\in W^{1,1}(a,b),\ 
	v(a)=0,\ 
	v(b)=1,\ 
	\mathcal E_{p,\theta}(v;a,b)<\infty
	\right\}.
	\]
\end{definition}
	
\subsection{Exact formula for the reduced capacity}
	\label{subsec:formula-capacidad-reducida}

\paragraph{Proof of Theorem~\ref{thm:estructura-problema-reducido}.}

Proposition~\ref{prop:identidad-rigurosa-energia-fibrada} reduces the variational problem to a one-dimensional functional governed by the weight $A_{p,\theta}$. We now obtain the explicit formula for $\operatorname{cap}_{p,\theta}(a,b)$ and construct an explicit optimal profile.

Let
\[
B(t):=A_{p,\theta}(t)^{-\frac{1}{p-1}}.
\]
If $v$ is an admissible profile for the reduced problem, then
\[
v(a)=0,\qquad v(b)=1,
\]
and therefore
\[
1=\int_a^b v'(t)\,\mathrm{d}t
=
\int_a^b v'(t)\,A_{p,\theta}(t)^{1/p}B(t)^{1/q}\,\mathrm{d}t,
\]
where $q=\frac{p}{p-1}$. Moreover,
\[
B(t)^{1/q}
=
A_{p,\theta}(t)^{-\frac{1}{p}},
\]
and therefore
\[
A_{p,\theta}(t)^{1/p}B(t)^{1/q}=1.
\]
Applying Hölder's inequality with conjugate exponents $p$ and $q$, we obtain
\[
1
\le
\left(
\int_a^b |v'(t)|^p A_{p,\theta}(t)\,\mathrm{d}t
\right)^{1/p}
\left(
\int_a^b B(t)\,\mathrm{d}t
\right)^{1/q}.
\]
Therefore,
\[
\mathcal{E}_{p,\theta}(v;a,b)
\ge
\left(
\int_a^b B(t)\,\mathrm{d}t
\right)^{1-p}.
\]
Taking infima over all admissible profiles, we obtain
\begin{equation}
	\label{eq:cota-cantidad-reducida-inferior}
	\operatorname{cap}_{p,\theta}(a,b)
	\ge
	\left(
	\int_a^b B(t)\,\mathrm{d}t
	\right)^{1-p}.
\end{equation}

For the upper bound, for each $k\in\mathbb N$ define
\[
B_k(t):=\min\{B(t),k\},
\qquad
I_k:=\int_a^b B_k(t)\,\mathrm{d}t.
\]
Since $B_k\in L^\infty(a,b)$, the function
\[
v_k(t):=\frac{1}{I_k}\int_a^t B_k(s)\,\mathrm{d}s
\]
is Lipschitz on $[a,b]$ and satisfies
\[
v_k(a)=0,\qquad v_k(b)=1.
\]
Moreover,
\[
v_k'(t)=\frac{B_k(t)}{I_k}
\qquad\text{for a.e. }t\in(a,b).
\]
Consequently,
\[
\mathcal E_{p,\theta}(v_k;a,b)
=
I_k^{-p}\int_a^b B_k(t)^p A_{p,\theta}(t)\,\mathrm{d}t.
\]
For almost every $t\in(a,b)$ one has
\[
B_k(t)^p A_{p,\theta}(t)\le B_k(t).
\]
Indeed, if $B(t)\le k$, then $B_k(t)=B(t)$ and
\[
B_k(t)^p A_{p,\theta}(t)=B(t)^p A_{p,\theta}(t)=B(t)=B_k(t).
\]
If $B(t)>k$, then $B_k(t)=k$ and, since
\[
B(t)=A_{p,\theta}(t)^{-\frac{1}{p-1}}>k,
\]
it follows that
\[
A_{p,\theta}(t)<k^{-(p-1)},
\]
hence
\[
B_k(t)^p A_{p,\theta}(t)=k^p A_{p,\theta}(t)\le k=B_k(t).
\]
Therefore,
\[
\mathcal E_{p,\theta}(v_k;a,b)
\le
I_k^{-p}\int_a^b B_k(t)\,\mathrm{d}t
=
I_k^{1-p}.
\]

If
\[
I:=\int_a^b B(t)\,\mathrm{d}t<\infty,
\]
then, by monotone convergence, $I_k\uparrow I$, and therefore
\[
\operatorname{cap}_{p,\theta}(a,b)
\le
\limsup_{k\to\infty}\mathcal E_{p,\theta}(v_k;a,b)
\le
\lim_{k\to\infty} I_k^{1-p}
=
I^{1-p}.
\]
Combining this with \eqref{eq:cota-cantidad-reducida-inferior}, we obtain the explicit formula
\begin{equation}
	\label{eq:capacidad-en-terminos-resistencia}
	\operatorname{cap}_{p,\theta}(a,b)=I^{1-p}.
\end{equation}

Now, if instead
\[
\int_a^b B(t)\,\mathrm{d}t=+\infty,
\]
then $I_k\uparrow+\infty$ and
\[
\mathcal E_{p,\theta}(v_k;a,b)\le I_k^{1-p}\longrightarrow 0.
\]
Therefore,
\[
\operatorname{cap}_{p,\theta}(a,b)=0.
\]

Assume now that
\[
0<
\int_a^b A_{p,\theta}(t)^{-\frac{1}{p-1}}\,\mathrm{d}t
<\infty.
\]
Define
\begin{equation}
	\label{eq:constante-normalizacion-perfil-optimo}
	C^{-1}
	=
	\int_a^b A_{p,\theta}(t)^{-\frac{1}{p-1}}\,\mathrm{d}t.
\end{equation}
Then the profile
\begin{equation}
	\label{eq:formula-explicita-perfil-optimo}
	v_*(t)
	=
	C
	\int_a^t A_{p,\theta}(\tau)^{-\frac{1}{p-1}}\,\mathrm{d}\tau
\end{equation}
is admissible, and its derivative satisfies
\begin{equation}
	\label{eq:perfil-optimo-derivada}
	v_*'(t)=C A_{p,\theta}(t)^{-\frac{1}{p-1}}
	\qquad
	\text{for almost every }t\in(a,b).
\end{equation}
Moreover,
\[
\mathcal E_{p,\theta}(v_*;a,b)
=
C^p\int_a^b A_{p,\theta}(t)^{-\frac{1}{p-1}}\,\mathrm{d}t
=
\left(
\int_a^b A_{p,\theta}(t)^{-\frac{1}{p-1}}\,\mathrm{d}t
\right)^{1-p}.
\]
Consequently, $v_*$ is a minimizer and
\[
\operatorname{cap}_{p,\theta}(a,b)
=
\left(
\int_a^b A_{p,\theta}(t)^{-\frac{1}{p-1}}\,\mathrm{d}t
\right)^{1-p}.
\]

This concludes the proof.

We next record some additional structural consequences of the reduced problem.

	\begin{remark}[Vanishing of the weight and integrability regime]
		\label{rem:anulacion-peso-colapso-capacidad-reducida}
		The preceding formula shows that the reduced problem is determined by the integrability of
		\[
		A_{p,\theta}(t)^{-\frac{1}{p-1}}.
		\]
		In particular, if $A_{p,\theta}(t)=0$ on a subset of positive measure in $(a,b)$, then
		\[
		A_{p,\theta}(t)^{-\frac{1}{p-1}}=+\infty
		\]
		on a set of positive measure, and therefore
		\[
		\int_a^b A_{p,\theta}(t)^{-\frac{1}{p-1}}\,\mathrm{d}t=+\infty.
		\]
		Consequently,
		\[
		\operatorname{cap}_{p,\theta}(a,b)=0.
		\]
This observation will be used in the subsequent local criteria concerning degeneration of the energy weight.
	\end{remark}
	
\begin{lemma}\label{lem:aproximacion-lipschitz-problema-reducido}
	The reduced quantity does not change if one restricts the admissible class in the definition of
	\[
	\operatorname{cap}_{p,\theta}(a,b).
	\] 
	to Lipschitz functions. More precisely,
	\[
	\operatorname{cap}_{p,\theta}(a,b)
	=
	\inf\left\{
	\mathcal E_{p,\theta}(w;a,b):
	\ w\in \operatorname{Lip}([a,b]),\ 
	w(a)=0,\ 
	w(b)=1
	\right\}.
	\]
\end{lemma}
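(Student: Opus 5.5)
Let $L$ denote the infimum over Lipschitz profiles on the right-hand side. I will prove $\operatorname{cap}_{p,\theta}(a,b)\le L$ and $L\le \operatorname{cap}_{p,\theta}(a,b)$ separately, and both will follow from the proof of Theorem~\ref{thm:estructura-problema-reducido}.

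\emph{The inequality $\operatorname{cap}_{p,\theta}(a,b)\le L$.} Every Lipschitz $w$ on $[a,b]$ with $w(a)=0$, $w(b)=1$ lies in $W^{1,1}(a,b)$. If $\mathcal E_{p,\theta}(w;a,b)=+\infty$, it does not affect the infimum. Otherwise $w$ is admissible in Definition~\ref{def:capacidad-reducida}.

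\emph{The inequality $L\le\operatorname{cap}_{p,\theta}(a,b)$.} I will reuse the truncated profiles $v_k(t)=I_k^{-1}\int_a^t B_k(s)\,\mathrm{d}s$ with $B_k=\min\{B,k\}$ and $B=A_{p,\theta}^{-1/(p-1)}$. These were shown to be Lipschitz on $[a,b]$, with $v_k(a)=0$, $v_k(b)=1$ and $\mathcal E_{p,\theta}(v_k;a,b)\le I_k^{1-p}$, so $L\le I_k^{1-p}$ for every $k$. There are two cases.
\begin{itemize}
\item If $I=\int_a^b B=+\infty$, then $I_k^{1-p}\to0$. Hence $L=0=\operatorname{cap}_{p,\theta}(a,b)$.
\item If $I<\infty$, monotone convergence gives $I_k\uparrow I$. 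Hence $L\le I^{1-p}=\operatorname{cap}_{p,\theta}(a,b)$ by \eqref{eq:capacidad-en-terminos-resistencia}.
\end{itemize}

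\emph{Main obstacle: the degenerate cases.} These are where $A_{p,\theta}=0$ or $A_{p,\theta}=+\infty$ on sets of positive measure, so that $B=+\infty$ or $B=0$ there.
\begin{itemize}
\item \emph{$B=+\infty$ on a set of positive measure.} Then $I=\infty$ and we are in the first case above.
\item \emph{Finiteness of $I_k$.} On $\{A_{p,\theta}=\infty\}$ we take $B=0$, and $I_k\le k(b-a)<\infty$.
\item \emph{Positivity of $I_k$.} We need $I_k>0$ so that $v_k$ is well defined. This fails only if $B=0$ a.e., i.e.\ $A_{p,\theta}=\infty$ a.e. on $(a,b)$. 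In that case every $W^{1,1}$ competitor has $v'\ne0$ on a set of positive measure, so its energy is $+\infty$. Both infima are then over sets containing no finite-energy profile, hence both equal $+\infty$, and they agree.
\end{itemize}

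I also need to check the key pointwise bound $B_k^pA_{p,\theta}\le B_k$ used in the proof of Theorem~\ref{thm:estructura-problema-reducido}, with the conventions $0\cdot\infty=0$ and $\infty^{-1/(p-1)}=0$. On $\{A_{p,\theta}=\infty\}$ we have $B_k=0$, so the left side is $0\cdot\infty=0\le 0=B_k$. On $\{A_{p,\theta}=0\}$ we have $B_k=k$, so the left side is $k^p\cdot 0=0\le k$. Hence the bound holds a.e., and the argument is complete.
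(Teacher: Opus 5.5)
Your proof is correct and takes essentially the same route as the paper. The inequality $\operatorname{cap}_{p,\theta}(a,b)\le L$ comes from including Lipschitz profiles in the $W^{1,1}$ class. For the reverse inequality, both arguments use the truncated Lipschitz profiles $v_k$ built from $\min\{B,k\}$, the pointwise bound $B_k^pA_{p,\theta}\le B_k$ giving $\mathcal E_{p,\theta}(v_k;a,b)\le I_k^{1-p}$, and the split into $I<\infty$ and $I=\infty$. Your extra handling of $A_{p,\theta}=+\infty$ and $I_k=0$ is harmless but vacuous: by the coarea formula, $\int_{\mathbb R}A_{p,\theta}(t)\,\mathrm{d}t=\int_\Omega|\nabla\theta|^p\,\mathrm{d}x<\infty$, so $A_{p,\theta}<\infty$ a.e., hence $B>0$ a.e. and $I_k>0$ for every $k$.
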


\begin{proof}
	Let
	\[
	\mathcal A_{a,b}
	:=
	\left\{
	v\in W^{1,1}(a,b):
	\ v(a)=0,\ v(b)=1
	\right\},
	\]
	and
	\[
	\mathcal A_{a,b}^{\operatorname{Lip}}
	:=
	\left\{
	w\in \operatorname{Lip}([a,b]):
	\ w(a)=0,\ w(b)=1
	\right\}.
	\]
	Since
	\[
	\mathcal A_{a,b}^{\operatorname{Lip}}\subset \mathcal A_{a,b},
	\]
	we immediately have
	\[
	\operatorname{cap}_{p,\theta}(a,b)
	\le
	\inf_{w\in \mathcal A_{a,b}^{\operatorname{Lip}}}
	\mathcal E_{p,\theta}(w;a,b).
	\]
	
	To prove the opposite inequality, we introduce
	\[
	B(t):=A_{p,\theta}(t)^{-1/(p-1)},
	\]
	with the convention that $B(t)=+\infty$ when $A_{p,\theta}(t)=0$.
	
	For each $k\ge 1$, define the truncation
	\[
	T_k(t):=\min\{B(t),k\},
	\]
	and the normalization constant
	\[
	c_k:=\int_a^b T_k(s)\,\mathrm{d}s.
	\]
	Then $0\le T_k\le k$, $T_k\in L^\infty(a,b)$, and $c_k\in(0,\infty)$ for all sufficiently large $k$. Define
	\[
	v_k(t)
	:=
	\frac{\displaystyle \int_a^t T_k(s)\,\mathrm{d}s}
	{\displaystyle c_k},
	\qquad t\in[a,b].
	\]
	By construction,
	\[
	v_k(a)=0,
	\qquad
	v_k(b)=1,
	\]
	and
	\[
	v_k'(t)=\frac{T_k(t)}{c_k}
	\quad\text{for a.e. }t\in(a,b).
	\]
	Since $0\le T_k\le k$, it follows that
	\[
	v_k\in \mathcal A_{a,b}^{\operatorname{Lip}}.
	\]
	
	Moreover, for almost every $t\in(a,b)$,
	\[
	|v_k'(t)|^p A_{p,\theta}(t)
	=
	\frac{T_k(t)^p A_{p,\theta}(t)}{c_k^p}.
	\]
	We claim that
	\[
	T_k(t)^p A_{p,\theta}(t)\le T_k(t)
	\quad\text{for a.e. }t\in(a,b).
	\]
	Indeed, if $B(t)\le k$, then $T_k(t)=B(t)=A_{p,\theta}(t)^{-1/(p-1)}$, and therefore
	\[
	T_k(t)^p A_{p,\theta}(t)
	=
	A_{p,\theta}(t)^{-p/(p-1)}A_{p,\theta}(t)
	=
	A_{p,\theta}(t)^{-1/(p-1)}
	=
	T_k(t).
	\]
	If instead $B(t)>k$, then $T_k(t)=k$ and, since
	\[
	k<B(t)=A_{p,\theta}(t)^{-1/(p-1)},
	\]
	we obtain
	\[
	k^{p-1}A_{p,\theta}(t)\le 1,
	\]
	hence
	\[
	T_k(t)^p A_{p,\theta}(t)
	=
	k^p A_{p,\theta}(t)
	\le k
	=
	T_k(t).
	\]
	Integrating, we conclude that
	\[
	\mathcal E_{p,\theta}(v_k;a,b)
	=
	\int_a^b |v_k'(t)|^p A_{p,\theta}(t)\,\mathrm{d}t
	\le
	\frac{1}{c_k^p}\int_a^b T_k(t)\,\mathrm{d}t
	=
	c_k^{\,1-p}.
	\]
	
Assume first that
	\[
	0<
	\int_a^b A_{p,\theta}(t)^{-1/(p-1)}\,\mathrm{d}t
	<
	\infty.
	\]
	Then, by monotone convergence,
	\[
	c_k
	=
	\int_a^b T_k(t)\,\mathrm{d}t,
	\qquad
	c_k \nearrow
	\int_a^b B(t)\,\mathrm{d}t
	=
	\int_a^b A_{p,\theta}(t)^{-1/(p-1)}\,\mathrm{d}t
	\quad\text{as }k\to\infty.
	\]
	Consequently,
	\[
	\limsup_{k\to\infty}\mathcal E_{p,\theta}(v_k;a,b)
	\le
	\lim_{k\to\infty} c_k^{\,1-p}
	=
	\left(
	\int_a^b A_{p,\theta}(t)^{-1/(p-1)}\,\mathrm{d}t
	\right)^{1-p}.
	\]
	By the explicit formula given in Theorem~\ref{thm:estructura-problema-reducido},
	\[
	\left(
	\int_a^b A_{p,\theta}(t)^{-1/(p-1)}\,\mathrm{d}t
	\right)^{1-p}
	=
	\operatorname{cap}_{p,\theta}(a,b).
	\]
	Thus,
	\[
	\inf_{w\in \mathcal A_{a,b}^{\operatorname{Lip}}}
	\mathcal E_{p,\theta}(w;a,b)
	\le
	\operatorname{cap}_{p,\theta}(a,b).
	\]
	
	Assume now that
	\[
	\int_a^b A_{p,\theta}(t)^{-1/(p-1)}\,\mathrm{d}t
	=
	\infty.
	\]
	Then $c_k\to\infty$, and by the preceding estimate,
	\[
	0\le \mathcal E_{p,\theta}(v_k;a,b)\le c_k^{\,1-p}\longrightarrow 0.
	\]
	Again by Theorem~\ref{thm:estructura-problema-reducido},
	\[
	\operatorname{cap}_{p,\theta}(a,b)=0.
	\]
	Therefore,
	\[
	\inf_{w\in \mathcal A_{a,b}^{\operatorname{Lip}}}
	\mathcal E_{p,\theta}(w;a,b)
	\le
	\operatorname{cap}_{p,\theta}(a,b).
	\]
	
	Combining both inequalities, we conclude that
	\[
	\operatorname{cap}_{p,\theta}(a,b)
	=
	\inf_{w\in \mathcal A_{a,b}^{\operatorname{Lip}}}
	\mathcal E_{p,\theta}(w;a,b).
	\]
\end{proof}

\subsection{Additional properties of the reduced problem}
\label{subsec:propiedades-adicionales-problema-reducido}

\begin{remark}[Coarea pushforward versus energy weight]
	\label{rem:pushforward-vs-peso-energetico}
A first reduction by coarea in the level variable suggests the geometric pushforward weight
	\begin{equation}
		\label{eq:def-wtheta-capacidad}
		w_\theta(t)
		=
		\int_{\Sigma_t\cap\Omega}
		\frac{1}{|\nabla\theta(x)|}\,\mathrm{d}\mathcal H^{n-1}(x).
	\end{equation}
	However, in the variational problem considered here the $p$-Dirichlet energy is not governed only by the geometric volume distribution, but by the effective energy weight
	\[
	A_{p,\theta}(t)
	=
	\int_{\Sigma_t\cap\Omega}
	|\nabla\theta(x)|^{p-1}\,\mathrm{d}\mathcal H^{n-1}(x).
	\]
	
	More precisely, whenever $0<w_\theta(t)<\infty$, if we define the fiberwise probability measure
	\[
	\mu_t
	:=
	\frac{|\nabla\theta|^{-1}\,\mathrm{d}\mathcal H^{n-1}}{w_\theta(t)}
	\Big|_{\Sigma_t\cap\Omega},
	\]
	then
	\begin{equation}
		\label{eq:Ap-como-w-rho}
		A_{p,\theta}(t)
		=
		w_\theta(t)\,\rho_\theta(t),
	\end{equation}
	where
	\[
	\rho_\theta(t)
	:=
	\int_{\Sigma_t\cap\Omega}
	|\nabla\theta(x)|^{p}\,\mathrm{d}\mu_t(x).
	\]
	Thus, $A_{p,\theta}$ combines the geometric pushforward $w_\theta$ with a fiberwise average of the energy factor $|\nabla\theta|^{p}$.
\end{remark}

\begin{remark}[Reduced resistance, series law, and decomposition of the weight]
	\label{rem:ley-serie-paralelo}
	We define the reduced resistance by
	\begin{equation}
		\label{eq:definicion-resistencia-reducida}
		R_{p,\theta}(a,b)
		:=
		\int_a^b A_{p,\theta}(t)^{-\frac{1}{p-1}}\,\mathrm{d}t.
	\end{equation}
	The exact formula for the reduced capacity then takes the form
	\begin{equation}
		\operatorname{cap}_{p,\theta}(a,b)
		=
		R_{p,\theta}(a,b)^{1-p}.
	\end{equation}
	Moreover, for every $a<c<b$,
	\begin{equation}
		\label{eq:ley-serie-resistencia}
		R_{p,\theta}(a,b)
		=
		R_{p,\theta}(a,c)+R_{p,\theta}(c,b).
	\end{equation}
	
	On the other hand, if for almost every $t$ the fiber $\Sigma_t\cap\Omega$ decomposes as a disjoint union of measurable components
	\[
	\Sigma_t\cap\Omega=\bigcup_{j=1}^k \Sigma_t^j,
	\]
	then
	\begin{equation}
		\label{eq:suma-componentes-peso}
		A_{p,\theta}(t)
		=
		\sum_{j=1}^k
		\int_{\Sigma_t^j}
		|\nabla\theta(x)|^{p-1}\,\mathrm{d}\mathcal H^{n-1}(x).
	\end{equation}
	Thus, the conductive contributions of the different components of the fiber add at the level of the energy weight $A_{p,\theta}$, while the one-dimensional variable $R_{p,\theta}$ composes in series by additivity of the integral.
\end{remark}

\begin{corollary}[Continuity and monotonicity with respect to the levels]
	\label{cor:continuidad-monotonicidad-niveles}
	Suppose that
	\[
	A_{p,\theta}(t)>0
	\qquad
	\text{for a.e. } t\in I,
	\]
	and that
	\[
	A_{p,\theta}^{-\frac{1}{p-1}}\in L^1_{\mathrm{loc}}(I),
	\]
	where $I\subset\mathbb R$ is an open interval. Then the map
	\[
	(a,b)\longmapsto R_{p,\theta}(a,b),
	\]
	defined by \eqref{eq:definicion-resistencia-reducida}, is continuous on $\{(a,b)\in I\times I:\ a<b\}$, and therefore so is
	\[
	(a,b)\longmapsto \operatorname{cap}_{p,\theta}(a,b).
	\]
	Moreover:
	\begin{itemize}
		\item if $a<b_1<b_2$, then
		\[
		R_{p,\theta}(a,b_2)\ge R_{p,\theta}(a,b_1),
		\qquad
		\operatorname{cap}_{p,\theta}(a,b_2)
		\le
		\operatorname{cap}_{p,\theta}(a,b_1);
		\]
		\item if $a_1<a_2<b$, then
		\[
		R_{p,\theta}(a_1,b)\ge R_{p,\theta}(a_2,b),
		\qquad
		\operatorname{cap}_{p,\theta}(a_1,b)
		\le
		\operatorname{cap}_{p,\theta}(a_2,b).
		\]
	\end{itemize}
	If in addition $A_{p,\theta}(t)<\infty$ for a.e. $t\in I$, then both inequalities are strict.
\end{corollary}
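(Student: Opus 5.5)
The plan is to work directly with the reduced resistance $R_{p,\theta}(a,b)=\int_a^b B(t)\,\mathrm{d}t$, where $B:=A_{p,\theta}^{-1/(p-1)}$. By hypothesis $B\in L^1_{\mathrm{loc}}(I)$ and $B<\infty$ a.e. on $I$. The formula of Theorem~\ref{thm:estructura-problema-reducido} gives $\operatorname{cap}_{p,\theta}(a,b)=R_{p,\theta}(a,b)^{1-p}$. Every statement about $\operatorname{cap}_{p,\theta}$ will therefore follow from the corresponding statement about $R_{p,\theta}$, composed with the map $r\mapsto r^{1-p}$. This map is continuous and strictly decreasing on $(0,\infty]$, with the convention $\infty^{1-p}=0$, and it is finite on $(0,\infty)$.

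\textbf{Continuity.} Fix $a_0<b_0$ in $I$ and choose a compact interval $[\alpha,\beta]\subset I$ containing $a_0,b_0$ in its interior. Since $B\in L^1(\alpha,\beta)$, the indefinite integral $G(t):=\int_\alpha^t B$ is (absolutely) continuous on $[\alpha,\beta]$. For $(a,b)$ near $(a_0,b_0)$ with $a<b$, the series law~\eqref{eq:ley-serie-resistencia} gives
\[
R_{p,\theta}(a,b)=G(b)-G(a).
\]
This expression is jointly continuous, and finite. Continuity of $\operatorname{cap}_{p,\theta}$ then requires $R_{p,\theta}(a,b)>0$, so that $r\mapsto r^{1-p}$ is continuous at the relevant value. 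Positivity follows because $A_{p,\theta}<\infty$ a.e., hence $B>0$ a.e.

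\textbf{Hidden assumption.} The hypotheses as stated only give $A_{p,\theta}>0$, i.e. $B<\infty$; they do not give $A_{p,\theta}<\infty$. This point is the main (mild) obstacle. If $B=0$ on some whole interval, then $R_{p,\theta}=0$ there and $\operatorname{cap}_{p,\theta}=+\infty$. With values in $(0,\infty]$ and the topology of the extended line, continuity still holds: $r\mapsto r^{1-p}$ is continuous from $[0,\infty)$ to $(0,\infty]$. I will note this explicitly. I will also remark that $A_{p,\theta}<\infty$ a.e. holds automatically for a Lipschitz $\theta$, by the coarea formula with $g=|\nabla\theta|^{p-2}$ and boundedness of $\Omega$, but only when $p\ge 2$. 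So I would state the extended-valued interpretation rather than rely on it.

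\textbf{Monotonicity.} For $a<b_1<b_2$ the series law gives
\[
R_{p,\theta}(a,b_2)=R_{p,\theta}(a,b_1)+R_{p,\theta}(b_1,b_2)\ge R_{p,\theta}(a,b_1),
\]
since $B\ge0$. The case $a_1<a_2<b$ is symmetric. Applying the decreasing map $r\mapsto r^{1-p}$ reverses both inequalities for $\operatorname{cap}_{p,\theta}$.

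\textbf{Strictness.} Under the extra hypothesis $A_{p,\theta}<\infty$ a.e., we have $B>0$ a.e. on the nondegenerate interval $(b_1,b_2)$, so $R_{p,\theta}(b_1,b_2)>0$. Local integrability makes every term finite, so the inequality for $R_{p,\theta}$ is strict. Strict decrease of $r\mapsto r^{1-p}$ on $(0,\infty)$ then makes the capacity inequality strict as well. The case $a_1<a_2<b$ is identical.
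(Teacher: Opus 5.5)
Your proposal is correct and follows the paper's proof: continuity of the indefinite integral of the $L^1_{\mathrm{loc}}$ function $A_{p,\theta}^{-1/(p-1)}$, the series law for monotonicity, and a.e.\ positivity of the integrand for strictness, each then composed with $r\mapsto r^{1-p}$; your check that continuity of $\operatorname{cap}_{p,\theta}$ needs $R_{p,\theta}(a,b)>0$ is a point the paper leaves implicit. One correction to your side remark: the coarea formula with $g=|\nabla\theta|^{p-1}$ (not $|\nabla\theta|^{p-2}$) gives $\int_{\mathbb R}A_{p,\theta}(t)\,\mathrm{d}t=\int_\Omega|\nabla\theta|^p\,\mathrm{d}x\le\|\nabla\theta\|_{L^\infty(\Omega)}^p\,|\Omega|<\infty$, so $A_{p,\theta}<\infty$ a.e.\ holds automatically for every $p>1$, not only $p\ge 2$, and your extended-valued caveat is never actually needed.
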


\begin{proof}
	By definition,
	\[
	R_{p,\theta}(a,b)
	=
	\int_a^b A_{p,\theta}(t)^{-\frac{1}{p-1}}\,\mathrm{d}t.
	\]
	Since $A_{p,\theta}^{-\frac{1}{p-1}}\in L^1_{\mathrm{loc}}(I)$, the map
	\[
	(a,b)\longmapsto R_{p,\theta}(a,b)
	\]
	is continuous on the domain $a<b$. The continuity of $(a,b)\mapsto \operatorname{cap}_{p,\theta}(a,b)$ then follows from \eqref{eq:capacidad-en-terminos-resistencia}.
	
	If $a<b_1<b_2$, the additivity \eqref{eq:ley-serie-resistencia} gives
	\[
	R_{p,\theta}(a,b_2)
	=
	R_{p,\theta}(a,b_1)+R_{p,\theta}(b_1,b_2)
	\ge
	R_{p,\theta}(a,b_1),
	\]
	and since $s\mapsto s^{1-p}$ is decreasing on $(0,\infty)$, we obtain
	\[
	\operatorname{cap}_{p,\theta}(a,b_2)
	\le
	\operatorname{cap}_{p,\theta}(a,b_1).
	\]
	Similarly, if $a_1<a_2<b$, then
	\[
	R_{p,\theta}(a_1,b)
	=
	R_{p,\theta}(a_1,a_2)+R_{p,\theta}(a_2,b)
	\ge
	R_{p,\theta}(a_2,b),
	\]
	and therefore
	\[
	\operatorname{cap}_{p,\theta}(a_1,b)
	\le
	\operatorname{cap}_{p,\theta}(a_2,b).
	\]
	If in addition $A_{p,\theta}(t)<\infty$ for a.e. $t\in I$, then
	\[
	A_{p,\theta}(t)^{-\frac{1}{p-1}}>0
	\qquad
	\text{for a.e. }t\in I,
	\]
	and every strict enlargement of the interval adds a strictly positive contribution to $R_{p,\theta}$. Hence the strict monotonicity of $R_{p,\theta}$, and consequently the opposite strict monotonicity of $\operatorname{cap}_{p,\theta}$.
\end{proof}

\begin{lemma}[Invariance under increasing reparametrization of the phase]
	\label{lem:invariancia-reparametrizacion-fase}
	Let $\phi:\mathbb R\to\mathbb R$ be a Lipschitz and strictly increasing function, and define
	\[
	\widetilde\theta:=\phi\circ\theta.
	\]
Then, for almost every $t\in(a,b)$ at which $\phi'(t)$ exists, and writing $s=\phi(t)$, one has
	\begin{equation}
		\label{eq:transformacion-peso-reparametrizacion}
		A_{p,\widetilde\theta}(s)
		=
		\phi'(t)^{p-1}A_{p,\theta}(t).
	\end{equation}
	Consequently,
	\begin{equation}
		\label{eq:invariancia-resistencia-reducida}
		R_{p,\widetilde\theta}(\phi(a),\phi(b))
		=
		R_{p,\theta}(a,b),
	\end{equation}
	and therefore
	\begin{equation}
		\label{eq:invariancia-capacidad-reducida}
		\operatorname{cap}_{p,\widetilde\theta}(\phi(a),\phi(b))
		=
		\operatorname{cap}_{p,\theta}(a,b).
	\end{equation}
\end{lemma}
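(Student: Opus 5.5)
The plan is to compute $A_{p,\widetilde\theta}$ directly from its definition, using the fact that the two phases have the same level sets. Since $\phi$ is strictly increasing, for every $s=\phi(t)$ in the range of $\phi$ we have
\[
\{\widetilde\theta=s\}=\{\phi\circ\theta=\phi(t)\}=\{\theta=t\}=\Sigma_t .
\]
So the fibers coincide and only the integrand changes.

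\textbf{Step 1: the gradient on the fiber.} By the chain rule for Lipschitz compositions (as in Lemma~\ref{lem:extension-truncada-composicion}), $\nabla\widetilde\theta(x)=\phi'(\theta(x))\nabla\theta(x)$ for a.e.\ $x\in\Omega$. This needs the usual convention that $\phi'(\theta(x))\nabla\theta(x)=0$ wherever $\nabla\theta(x)=0$. The difficulty is that "a.e.\ in $\Omega$" does not directly give "$\mathcal H^{n-1}$-a.e.\ on $\Sigma_t$".

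I would resolve this through the coarea formula \eqref{eq:coarea-capacidad}. Let $N\subset\Omega$ be the null set where the chain-rule identity fails. Applying coarea to $g=\mathbf 1_N$ gives $\mathcal H^{n-1}(N\cap\Sigma_t)=0$ for a.e.\ $t$. Likewise, the set $Z$ of levels where $\phi$ is not differentiable is Lebesgue-null. The part of $\theta^{-1}(Z)$ where $\nabla\theta\neq 0$ is therefore null by coarea. The part where $\nabla\theta=0$ contributes nothing to $A_{p,\theta}$, and also nothing to $A_{p,\widetilde\theta}$ by the chain rule.

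Hence for a.e.\ $t$ we have $|\nabla\widetilde\theta|^{p-1}=\phi'(t)^{p-1}|\nabla\theta|^{p-1}$ $\mathcal H^{n-1}$-a.e.\ on $\Sigma_t$. Integrating over $\Sigma_t$ yields \eqref{eq:transformacion-peso-reparametrizacion}.

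\textbf{Step 2: invariance of the resistance.} I would write $R_{p,\widetilde\theta}(\phi(a),\phi(b))=\int_{\phi(a)}^{\phi(b)}A_{p,\widetilde\theta}(s)^{-1/(p-1)}\,\mathrm ds$ and change variables $s=\phi(t)$. Since $\phi$ is Lipschitz, it is absolutely continuous and monotone, so for nonnegative measurable $h$ the formula
\[
\int_{\phi(a)}^{\phi(b)}h(s)\,\mathrm ds=\int_a^b h(\phi(t))\phi'(t)\,\mathrm dt
\]
holds. Substituting Step 1 gives the integrand
\[
\bigl(\phi'(t)^{p-1}A_{p,\theta}(t)\bigr)^{-1/(p-1)}\phi'(t)=A_{p,\theta}(t)^{-1/(p-1)}
\]
wherever $\phi'(t)>0$, which proves \eqref{eq:invariancia-resistencia-reducida}.

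\textbf{Main obstacle: the set $\{\phi'=0\}$.} This set can have positive measure even for strictly increasing $\phi$, and there the integrand $\infty\cdot 0$ must be interpreted. On the $s$-side this set is harmless: its image $\phi(\{\phi'=0\})$ has measure zero, so it contributes nothing to the left-hand side. On the $t$-side, however, $A_{p,\theta}^{-1/(p-1)}$ contributes a positive amount. So exact equality requires either assuming $\phi'>0$ a.e.\ (for instance $\phi$ bi-Lipschitz, or $\phi^{-1}$ absolutely continuous) or treating this set separately. I would add that hypothesis, which the statement implicitly uses when it writes "$\phi'(t)$ exists" in a nondegenerate sense. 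The alternative is to prove the equality first for $\phi'\ge\varepsilon$ and pass to a limit.

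\textbf{Step 3: the capacity.} Given \eqref{eq:invariancia-resistencia-reducida}, Theorem~\ref{thm:estructura-problema-reducido} gives $\operatorname{cap}=R^{1-p}$ for both phases, with the convention $0$ when $R=\infty$. This yields \eqref{eq:invariancia-capacidad-reducida}.
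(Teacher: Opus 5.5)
Your route is the paper's: the fibers of $\widetilde\theta$ and $\theta$ coincide, the chain rule gives the weight identity, the substitution $s=\phi(t)$ gives the resistance identity, and $\operatorname{cap}=R^{1-p}$ finishes. Your Step~1 is more careful than the paper. The paper passes directly from the chain rule a.e.\ in $\Omega$ to an identity on the fibers, and applying the coarea formula to $g=\mathbf 1_N$ supplies exactly the missing justification.

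Your worry about $\{\phi'=0\}$ is correct, and it is not a technicality. Neither ``treating the set separately'' nor proving the case $\phi'\ge\varepsilon$ and passing to a limit can work, because \eqref{eq:invariancia-resistencia-reducida} and \eqref{eq:invariancia-capacidad-reducida} are false without an extra hypothesis. The paper's proof breaks exactly where you say, at the assertion $A_{p,\widetilde\theta}(\phi(t))^{-1/(p-1)}\phi'(t)=A_{p,\theta}(t)^{-1/(p-1)}$ for a.e.\ $t$. On $\{\phi'=0\}$ the left side is $0$, with $0\cdot\infty=0$ as the change-of-variables formula requires. The right side is positive, since $\int A_{p,\theta}\,\mathrm dt=\int_\Omega|\nabla\theta|^p\,\mathrm dx<\infty$ forces $A_{p,\theta}<\infty$ a.e.

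Here is a concrete counterexample. In the planar model $\theta(x)=x_1$ one has $A_{p,\theta}\equiv c:=\mathcal H^{n-1}(D)$. Let $K$ be a closed nowhere dense set with $|K\cap(a,b)|>0$, and set $\phi(t):=\int_0^t\mathbf 1_{\mathbb R\setminus K}(\tau)\,\mathrm d\tau$. Then $\phi$ is $1$-Lipschitz and strictly increasing, $\phi'=1$ on $\mathbb R\setminus K$, and $|\phi(K)|=0$. Hence $A_{p,\widetilde\theta}(s)=c$ for a.e.\ $s$, and
\[
R_{p,\widetilde\theta}(\phi(a),\phi(b))=c^{-\frac{1}{p-1}}\bigl(b-a-|K\cap(a,b)|\bigr)<c^{-\frac{1}{p-1}}(b-a)=R_{p,\theta}(a,b).
\]
Therefore $\operatorname{cap}_{p,\widetilde\theta}(\phi(a),\phi(b))>\operatorname{cap}_{p,\theta}(a,b)$.

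In general your own computation yields $R_{p,\widetilde\theta}(\phi(a),\phi(b))=\int_{(a,b)\cap\{\phi'>0\}}A_{p,\theta}^{-1/(p-1)}\,\mathrm dt\le R_{p,\theta}(a,b)$. This inequality is strict whenever $|\{\phi'=0\}\cap(a,b)|>0$ and $R_{p,\theta}(a,b)<\infty$.

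So do not read the missing hypothesis into the phrase ``$\phi'(t)$ exists''. The weight identity \eqref{eq:transformacion-peso-reparametrizacion} is actually true at points where $\phi'(t)=0$; only the two consequences need more. State the hypothesis $|\{\phi'=0\}\cap(a,b)|=0$ explicitly, or equivalently that $\phi^{-1}$ is absolutely continuous on $[\phi(a),\phi(b)]$. With it your proof is complete.
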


\begin{proof}
	Since $\phi$ is Lipschitz, it is differentiable almost everywhere, and by the chain rule
	\[
	\nabla(\phi\circ\theta)(x)
	=
	\phi'(\theta(x))\,\nabla\theta(x)
	\qquad
	\text{for a.e. }x\in\Omega.
	\]
	Moreover, since $\phi$ is strictly increasing, for each $t\in(a,b)$ and $s=\phi(t)$ one has
	\[
	\{\widetilde\theta=s\}=\{\theta=t\}.
	\]
	Therefore, for almost every $t\in(a,b)$,
	\[
	A_{p,\widetilde\theta}(\phi(t))
	=
	\int_{\Sigma_t\cap\Omega}
	|\phi'(\theta(x))|^{p-1}|\nabla\theta(x)|^{p-1}\,\mathrm{d}\mathcal H^{n-1}(x)
	=
	\phi'(t)^{p-1}A_{p,\theta}(t),
	\]
	which proves \eqref{eq:transformacion-peso-reparametrizacion}.
	
	Consequently,
	\[
	A_{p,\widetilde\theta}(\phi(t))^{-\frac{1}{p-1}}\,\phi'(t)
	=
	A_{p,\theta}(t)^{-\frac{1}{p-1}}
	\qquad
	\text{for a.e. }t\in(a,b).
	\]
	Integrating and using the change of variables $s=\phi(t)$, we obtain
	\[
	R_{p,\widetilde\theta}(\phi(a),\phi(b))
	=
	R_{p,\theta}(a,b).
	\]
	The identity \eqref{eq:invariancia-capacidad-reducida} then follows from \eqref{eq:capacidad-en-terminos-resistencia}.
\end{proof}

\begin{remark}[On decreasing reparametrizations]
	\label{rem:reparametrizacion-decreciente}
	The preceding invariance is formulated for Lipschitz strictly increasing reparametrizations, which preserve the orientation of the level interval $(a,b)$. If $\phi$ is strictly decreasing, then it interchanges the roles of the endpoints and the corresponding formulation is written on the interval $(\phi(b),\phi(a))$. We will not use this variant in what follows.
\end{remark}

\begin{proposition}[Improvement over the linear profile]
	\label{prop:mejora-perfil-lineal}
	Let $\theta:\Omega\to\mathbb{R}$ be a phase in $W^{1,\infty}(\Omega)$, and let $a<b$ be admissible levels such that
	\[
	\overline{E_a}\subset\Omega,
	\qquad
	\overline{F_b}\subset\Omega,
	\qquad
	\overline{E_a}\cap\overline{F_b}=\varnothing.
	\]
	Assume in addition that
	\[
	A_{p,\theta}(t)>0
	\qquad
	\text{for a.e. }t\in(a,b),
	\]
	and that
	\[
	A_{p,\theta}^{-1/(p-1)}\in L^1(a,b).
	\]
	Then
	\begin{equation}
		\label{eq:mejora-perfil-lineal}
		\operatorname{cap}_{p,\theta}(a,b)
		\le
		\frac{1}{(b-a)^p}\int_a^b A_{p,\theta}(t)\,\mathrm{d}t.
	\end{equation}
	More precisely, if
	\[
	v_{\mathrm{lin}}(t):=\frac{t-a}{b-a},
	\qquad t\in[a,b],
	\]
	then
	\begin{equation}
		\label{eq:energia-perfil-lineal}
		\mathcal E_{p,\theta}(v_{\mathrm{lin}};a,b)
		=
		\frac{1}{(b-a)^p}\int_a^b A_{p,\theta}(t)\,\mathrm{d}t,
	\end{equation}
	and
	\begin{equation}
		\label{eq:capacidad-reducida-mejor-lineal}
		\operatorname{cap}_{p,\theta}(a,b)
		\le
		\mathcal E_{p,\theta}(v_{\mathrm{lin}};a,b).
	\end{equation}
	Moreover, equality in \eqref{eq:capacidad-reducida-mejor-lineal} holds if and only if
	\[
	A_{p,\theta}(t)
	\quad
	\text{is constant for a.e. } t\in(a,b).
	\]
\end{proposition}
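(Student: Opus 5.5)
The plan is to compute the energy of the linear profile directly and then compare it with the exact formula of Theorem~\ref{thm:estructura-problema-reducido}.

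The profile $v_{\mathrm{lin}}$ is Lipschitz, with $v_{\mathrm{lin}}(a)=0$, $v_{\mathrm{lin}}(b)=1$ and $v_{\mathrm{lin}}'\equiv 1/(b-a)$. Substituting this into Definition~\ref{def:funcional-reducido} gives \eqref{eq:energia-perfil-lineal} at once. If $\int_a^b A_{p,\theta}=+\infty$, inequality \eqref{eq:capacidad-reducida-mejor-lineal} is trivial. Otherwise $v_{\mathrm{lin}}$ has finite energy, so it is admissible in Definition~\ref{def:capacidad-reducida}, and \eqref{eq:capacidad-reducida-mejor-lineal} follows from the definition of the infimum. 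Inequality \eqref{eq:mejora-perfil-lineal} is the combination of these two facts. (Lemma~\ref{lem:aproximacion-lipschitz-problema-reducido} gives the same conclusion in the Lipschitz class.)

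For the equality case I would use the explicit value $\operatorname{cap}_{p,\theta}(a,b)=R^{1-p}$ with $R=\int_a^b B$, where $B=A_{p,\theta}^{-1/(p-1)}$. By hypothesis $0<R<\infty$, since $B>0$ a.e. because $A_{p,\theta}<\infty$ a.e. Equality in \eqref{eq:capacidad-reducida-mejor-lineal} then reads
\[
(b-a)^p=\Bigl(\int_a^b A_{p,\theta}\Bigr)\Bigl(\int_a^b B\Bigr)^{p-1}.
\]
The key step is to prove the reverse inequality
\[
(b-a)^p\le \Bigl(\int_a^b A\Bigr)\Bigl(\int_a^b A^{-1/(p-1)}\Bigr)^{p-1}
\]
with a sharp equality case. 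Write $1=A^{1/p}A^{-1/p}$ and apply Hölder's inequality with exponents $p$ and $q=p/(p-1)$:
\[
b-a=\int_a^b A^{1/p}A^{-1/p}\le \Bigl(\int_a^b A\Bigr)^{1/p}\Bigl(\int_a^b A^{-q/p}\Bigr)^{1/q}.
\]
Since $q/p=1/(p-1)$ and $p/q=p-1$, raising to the power $p$ gives exactly the displayed inequality. This yields a second, independent proof of \eqref{eq:capacidad-reducida-mejor-lineal}.

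The remaining work is the equality characterization. Equality in Hölder's inequality holds if and only if $A$ and $A^{-q/p}=A^{-1/(p-1)}$ are proportional a.e., that is, $A^{p/(p-1)}$ is constant a.e. Since $A>0$ a.e., this means $A$ is constant a.e. Conversely, if $A\equiv c$ then both sides equal $c(b-a)^{1-p}$ by direct computation.

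The main obstacle is the rigorous use of the Hölder equality condition when $\int_a^b A$ might be infinite. In that case there is no equality, because the left side of \eqref{eq:capacidad-reducida-mejor-lineal} is finite; note that $\operatorname{cap}_{p,\theta}(a,b)$ is finite, and in fact positive, since $R<\infty$. So the equality analysis reduces to the case where both integrals are finite and positive, and there the standard equality criterion in Hölder's inequality applies.
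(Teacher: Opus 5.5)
Your proposal is correct and follows the paper's proof. Both compute $\mathcal E_{p,\theta}(v_{\mathrm{lin}};a,b)$ directly, bound $\operatorname{cap}_{p,\theta}(a,b)$ by it via the infimum, and recast the inequality as H\"older's inequality for $A_{p,\theta}^{1/p}$ and $A_{p,\theta}^{-1/p}$, whose equality case forces $A_{p,\theta}$ to be constant a.e. Your version is slightly more careful about finite-energy admissibility and states the exact H\"older equality condition $A_{p,\theta}\propto A_{p,\theta}^{-1/(p-1)}$. The case $\int_a^b A_{p,\theta}\,\mathrm{d}t=+\infty$ that you guard against never actually occurs, since by coarea $\int_a^b A_{p,\theta}\,\mathrm{d}t\le\int_\Omega|\nabla\theta|^p\,\mathrm{d}x<\infty$ for Lipschitz $\theta$ on bounded $\Omega$.
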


\begin{proof}
	Consider the linear profile
	\[
	v_{\mathrm{lin}}(t):=\frac{t-a}{b-a},
	\qquad t\in[a,b].
	\]
	Then
	\[
	v'_{\mathrm{lin}}(t)=\frac{1}{b-a}
	\qquad
	\text{for a.e. }t\in(a,b),
	\]
	and by the definition of the reduced functional,
	\[
	\mathcal E_{p,\theta}(v_{\mathrm{lin}};a,b)
	=
	\frac{1}{(b-a)^p}\int_a^b A_{p,\theta}(t)\,\mathrm{d}t.
	\]
	This proves \eqref{eq:energia-perfil-lineal}.
	
	Since $\operatorname{cap}_{p,\theta}(a,b)$ is the infimum of $\mathcal E_{p,\theta}(v;a,b)$ over all admissible profiles, it follows immediately that
	\[
	\operatorname{cap}_{p,\theta}(a,b)
	\le
	\mathcal E_{p,\theta}(v_{\mathrm{lin}};a,b)
	=
	\frac{1}{(b-a)^p}\int_a^b A_{p,\theta}(t)\,\mathrm{d}t,
	\]
	which proves \eqref{eq:capacidad-reducida-mejor-lineal}.
	
	On the other hand, using the exact formula
	\[
	\operatorname{cap}_{p,\theta}(a,b)
	=
	\left(
	\int_a^b A_{p,\theta}(t)^{-1/(p-1)}\,\mathrm{d}t
	\right)^{1-p},
	\]
	the preceding inequality is equivalent to
	\[
	(b-a)^p
	\le
	\left(\int_a^b A_{p,\theta}(t)\,\mathrm{d}t\right)
	\left(
	\int_a^b A_{p,\theta}(t)^{-1/(p-1)}\,\mathrm{d}t
	\right)^{p-1},
	\]
	which is precisely Hölder's inequality applied to the functions
	\[
	A_{p,\theta}(t)^{1/p}
	\quad\text{and}\quad
	A_{p,\theta}(t)^{-1/p}
	\]
	with conjugate exponents $p$ and $p/(p-1)$.
	
	Moreover, the equality condition in Hölder is equivalent to requiring that these two functions be proportional almost everywhere, which happens if and only if $A_{p,\theta}(t)$ is constant for almost every $t\in(a,b)$. This characterizes equality in \eqref{eq:capacidad-reducida-mejor-lineal}.
\end{proof}

\begin{remark}
	\label{rem:perfil-lineal-como-control}
	The preceding proposition records an elementary upper bound within the reduced problem: the linear profile provides an explicit admissible profile, while $\operatorname{cap}_{p,\theta}(a,b)$ represents the optimal cost in the fibered class. In particular,
	\[
	\mathcal E_{p,\theta}(v_{\mathrm{lin}};a,b)-\operatorname{cap}_{p,\theta}(a,b)
	\]
	measures the excess cost of the linear profile relative to the reduced optimum. In the explicit models of Section~\ref{sec:ejemplos}, this comparison appears concretely.
\end{remark}
\section{Geometric comparability and proof of Theorem~\ref{thm:cota-superior-fibrada}}
\label{sec:comparabilidad-geometrica}

In this section we establish the upper bound obtained by fibered restriction and derive estimates for the energy weight $A_{p,\theta}$ from the gradient profile and the geometry of the level fibers. 

\subsection{Upper bound by fibered restriction}
\label{subsec:cota-superior-restriccion-fibrada}

Let $\theta:\overline\Omega\to\mathbb R$ be a Lipschitz phase on $\overline\Omega$, and let $a<b$ be admissible levels for the phase such that the pair $(E_a,F_b)$ forms an interior condenser in $\Omega$.

The fibered class associated with $\theta$ yields, without additional hypotheses beyond the framework already fixed, an upper bound for the full geometric capacity of the condenser induced by the phase, since Lipschitz admissible profiles for the reduced problem generate admissible functions for the geometric capacity by composition with $\theta$.

\paragraph{Proof of Theorem~\ref{thm:cota-superior-fibrada}.}
	Let $\varepsilon>0$. By Lemma~\ref{lem:aproximacion-lipschitz-problema-reducido}, for every $\varepsilon>0$ there exists a Lipschitz admissible profile $v_\varepsilon$ for the reduced problem whose energy is $\varepsilon$-almost optimal.
	\[
	v_\varepsilon:[a,b]\to\mathbb R,
	\qquad
	v_\varepsilon(a)=0,
	\quad
	v_\varepsilon(b)=1,
	\]
	such that
	\[
	\mathcal E_{p,\theta}(v_\varepsilon;a,b)
	\le
	\operatorname{cap}_{p,\theta}(a,b)+\varepsilon.
	\]
	Let $\widetilde v_\varepsilon$ be the truncated extension of $v_\varepsilon$ given by Lemma~\ref{lem:extension-truncada-composicion}, and define
	\[
	u_\varepsilon(x):=\widetilde v_\varepsilon(\theta(x)).
	\]
	Then
	\[
	u_\varepsilon=0 \quad \text{on } E_a,
	\qquad
	u_\varepsilon=1 \quad \text{on } F_b,
	\]
	so that $u_\varepsilon\in \mathcal A_{E_a,F_b}(\Omega)$. Moreover, by Proposition~\ref{prop:identidad-rigurosa-energia-fibrada},
	\[
	\int_\Omega |\nabla u_\varepsilon(x)|^p\,\mathrm{d}x
	=
	\mathcal E_{p,\theta}(v_\varepsilon;a,b)
	\le
	\operatorname{cap}_{p,\theta}(a,b)+\varepsilon.
	\]
Letting $\varepsilon\downarrow 0$, we obtain
\[
\operatorname{Cap}_{p,\Omega}(E_a,F_b)\le \operatorname{cap}_{p,\theta}(a,b),
\]
as claimed.

\subsection{Estimates for the energy weight and the reduced capacity}
\label{subsec:diseno-peso-cantidad-reducida}

Once the phase $\theta$ is fixed, control of the reduced quantity $\operatorname{cap}_{p,\theta}(a,b)$
is obtained by estimating the energy weight
\[
A_{p,\theta}(t)
=
\int_{\Sigma_t\cap\Omega}
|\nabla\theta(x)|^{p-1}\,\mathrm{d}\mathcal H^{n-1}(x).
\]
Using the notation
\[
S_\theta(t)=\mathcal H^{n-1}(\Sigma_t\cap\Omega)
\]
introduced in Section~\ref{sec:setup-capacidad-fase}, the energy weight is determined jointly by the size of the gradient on each fiber and by the geometric size of the levels. In particular, control of both profiles as functions of $t$ yields explicit bounds for $A_{p,\theta}$ and, through the exact formula for the reduced problem, for $\operatorname{cap}_{p,\theta}(a,b)$.

\begin{proposition}[Two-sided design of the weight and the reduced capacity]
	\label{prop:diseno-bilateral-peso-cantidad}
	Suppose that there exist measurable functions
	\[
	\Gamma_-,\Gamma_+:[a,b]\to [0,\infty),
	\qquad
	m,M:[a,b]\to [0,\infty),
	\]
	such that
	\[
	\Gamma_-(\theta(x))
	\le
	|\nabla\theta(x)|
	\le
	\Gamma_+(\theta(x))
	\qquad
	\text{for a.e. } x\in \{a\le \theta\le b\},
	\]
	and
	\[
	m(t)\le S_\theta(t)\le M(t)
	\qquad
	\text{for a.e. } t\in (a,b).
	\]
	Then, for almost every $t\in (a,b)$,
	\begin{equation}
		\label{eq:diseno-bilateral-peso}
		\Gamma_-(t)^{p-1}m(t)
		\le
		A_{p,\theta}(t)
		\le
		\Gamma_+(t)^{p-1}M(t).
	\end{equation}
	If in addition the integrals involved are finite, one has
	\begin{equation}
		\label{eq:diseno-bilateral-capacidad-inferior}
		\left(
		\int_a^b
		\left(\Gamma_-(t)^{p-1}m(t)\right)^{-\frac{1}{p-1}}
		\,\mathrm{d}t
		\right)^{1-p}
		\le
		\operatorname{cap}_{p,\theta}(a,b),
	\end{equation}
	and
	\begin{equation}
		\label{eq:diseno-bilateral-capacidad-superior}
		\operatorname{cap}_{p,\theta}(a,b)
		\le
		\left(
		\int_a^b
		\left(\Gamma_+(t)^{p-1}M(t)\right)^{-\frac{1}{p-1}}
		\,\mathrm{d}t
		\right)^{1-p}.
	\end{equation}
\end{proposition}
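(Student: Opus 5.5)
The plan is to prove the pointwise bound \eqref{eq:diseno-bilateral-peso} first and then deduce \eqref{eq:diseno-bilateral-capacidad-inferior} and \eqref{eq:diseno-bilateral-capacidad-superior} from the exact formula of Theorem~\ref{thm:estructura-problema-reducido}, using the monotonicity of $s\mapsto s^{-1/(p-1)}$ and of $s\mapsto s^{1-p}$.

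\textbf{Step 1: the pointwise bound.} Fix $t\in(a,b)$ outside the exceptional sets; the main work is to choose these sets correctly. The gradient hypothesis holds only for a.e.\ $x\in\{a\le\theta\le b\}$, that is, off a Lebesgue-null set $N\subset\Omega$. I would then apply the coarea formula \eqref{eq:coarea-capacidad} with $g=\mathbf 1_N$. Since the left-hand side vanishes, this gives $\mathcal H^{n-1}(N\cap\Sigma_t)=0$ for a.e.\ $t$.

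Two remarks are needed here. First, the coarea formula only loses information where $\nabla\theta=0$. This causes no trouble, because the integrand of $A_{p,\theta}$ vanishes there when $p>1$ (and the lower bound there forces $\Gamma_-(t)=0$ on the relevant points). Second, $N$ should be read as the set where either the inequality fails or $\nabla\theta$ is undefined. Applying the coarea formula to this set shows that, for a.e.\ $t$, $\mathcal H^{n-1}$-a.e.\ point of $\Sigma_t\cap\Omega$ satisfies
\[
\Gamma_-(t)\le|\nabla\theta|\le\Gamma_+(t).
\]

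We also exclude the null set of $t$ where $m\le S_\theta\le M$ fails. Raising the gradient bounds to the power $p-1>0$ and integrating over $\Sigma_t\cap\Omega$ gives
\[
\Gamma_-(t)^{p-1}S_\theta(t)\le A_{p,\theta}(t)\le\Gamma_+(t)^{p-1}S_\theta(t).
\]
Combining this with $m\le S_\theta\le M$ yields \eqref{eq:diseno-bilateral-peso}. The conventions $0\cdot\infty=0$ are used if $S_\theta(t)=\infty$, in which case the bound is trivial or forces the upper side to be infinite.

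\textbf{Step 2: the capacity bounds.} The lower bound in \eqref{eq:diseno-bilateral-peso} gives
\[
A_{p,\theta}^{-1/(p-1)}\le(\Gamma_-^{p-1}m)^{-1/(p-1)}\quad\text{a.e.},
\]
with the value $+\infty$ allowed on the right. Integrating over $(a,b)$ gives
\[
R_{p,\theta}(a,b)\le\int_a^b(\Gamma_-^{p-1}m)^{-1/(p-1)}\,\mathrm{d}t.
\]
Since $s\mapsto s^{1-p}$ is decreasing on $(0,\infty]$, with $\infty^{1-p}=0$, and since Theorem~\ref{thm:estructura-problema-reducido} gives $\operatorname{cap}_{p,\theta}(a,b)=R_{p,\theta}(a,b)^{1-p}$, we obtain \eqref{eq:diseno-bilateral-capacidad-inferior}.

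Symmetrically, the upper bound in \eqref{eq:diseno-bilateral-peso} gives
\[
R_{p,\theta}(a,b)\ge\int_a^b(\Gamma_+^{p-1}M)^{-1/(p-1)}\,\mathrm{d}t,
\]
and this yields \eqref{eq:diseno-bilateral-capacidad-superior}. The finiteness hypotheses only ensure that the right-hand sides are meaningful positive numbers. The inequalities themselves persist under the conventions of Theorem~\ref{thm:estructura-problema-reducido}, including the case $R_{p,\theta}=+\infty$, where the capacity is $0$.

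The main obstacle is Step 1, namely transferring the Lebesgue-a.e.\ gradient inequality to $\mathcal H^{n-1}$-a.e.\ points on almost every fiber. Step 2 is elementary monotonicity.
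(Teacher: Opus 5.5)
Your proposal is correct and follows the same route as the paper. You integrate the fiberwise gradient bounds to get $\Gamma_-(t)^{p-1}S_\theta(t)\le A_{p,\theta}(t)\le\Gamma_+(t)^{p-1}S_\theta(t)$ and combine this with $m\le S_\theta\le M$. You then invert, integrate, and conclude from the exact formula $\operatorname{cap}_{p,\theta}(a,b)=R_{p,\theta}(a,b)^{1-p}$ and the monotonicity of $s\mapsto s^{1-p}$. Your coarea argument with $g=\mathbf 1_N$ usefully justifies the passage from Lebesgue-a.e.\ to $\mathcal H^{n-1}$-a.e.\ bounds on fibers, valid for a.e.\ $t$ rather than every fixed $t$, which the paper only asserts. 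Your aside about critical points is unnecessary, since the same identity shows that $\{\nabla\theta=0\}$ meets a.e.\ fiber in an $\mathcal H^{n-1}$-null set.
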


\begin{proof}
	Fixing $t\in (a,b)$, the hypothesis on the gradient implies that, for almost every $x\in \Sigma_t\cap\Omega$,
	\[
	\Gamma_-(t)\le |\nabla\theta(x)|\le \Gamma_+(t).
	\]
	Raising to the power $p-1$ and integrating over the fiber $\Sigma_t\cap\Omega$, we obtain
	\[
	\Gamma_-(t)^{p-1}S_\theta(t)
	\le
	A_{p,\theta}(t)
	\le
	\Gamma_+(t)^{p-1}S_\theta(t).
	\]
	Using in addition that
	\[
	m(t)\le S_\theta(t)\le M(t),
	\]
	we conclude \eqref{eq:diseno-bilateral-peso}.
	
By the exact formula for the reduced capacity,
\[
\operatorname{cap}_{p,\theta}(a,b)
=
\left(
\int_a^b A_{p,\theta}(t)^{-\frac{1}{p-1}}\,\mathrm{d}t
\right)^{1-p}.
\]
	From \eqref{eq:diseno-bilateral-peso} it follows that
	\[
	\left(\Gamma_+(t)^{p-1}M(t)\right)^{-\frac{1}{p-1}}
	\le
	A_{p,\theta}(t)^{-\frac{1}{p-1}}
	\le
	\left(\Gamma_-(t)^{p-1}m(t)\right)^{-\frac{1}{p-1}}.
	\]
	Integrating over $(a,b)$, we obtain
	\[
	\int_a^b
	\left(\Gamma_+(t)^{p-1}M(t)\right)^{-\frac{1}{p-1}}
	\,\mathrm{d}t
	\le
	\int_a^b
	A_{p,\theta}(t)^{-\frac{1}{p-1}}
	\,\mathrm{d}t
	\le
	\int_a^b
	\left(\Gamma_-(t)^{p-1}m(t)\right)^{-\frac{1}{p-1}}
	\,\mathrm{d}t.
	\]
	Since the function $s\mapsto s^{1-p}$ is decreasing on $(0,\infty)$, this yields \eqref{eq:diseno-bilateral-capacidad-inferior} and \eqref{eq:diseno-bilateral-capacidad-superior}.
\end{proof}

In particular, $\operatorname{cap}_{p,\theta}(a,b)$ is controlled by the gradient of the phase and by the geometric size of the fibers.

\begin{corollary}[Exact design regime]
	\label{cor:regimen-exacto-diseno}
	Suppose that there exists a measurable function
	\[
	\Gamma:(a,b)\to [0,\infty)
	\]
	such that
	\[
	|\nabla\theta(x)|=\Gamma(\theta(x))
	\qquad
	\text{for a.e. } x\in \{a<\theta<b\}.
	\]
	Then, for almost every $t\in (a,b)$,
	\begin{equation}
		\label{eq:regimen-exacto-peso}
		A_{p,\theta}(t)=\Gamma(t)^{p-1}S_\theta(t).
	\end{equation}
	Consequently,
	\begin{equation}
		\label{eq:regimen-exacto-cantidad-reducida}
		\operatorname{cap}_{p,\theta}(a,b)
		=
		\left(
		\int_a^b
		\Gamma(t)^{-1}S_\theta(t)^{-\frac{1}{p-1}}
		\,\mathrm{d}t
		\right)^{1-p},
	\end{equation}
	whenever the integral is finite.
\end{corollary}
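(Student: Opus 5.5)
The plan is to obtain \eqref{eq:regimen-exacto-peso} as the degenerate case $\Gamma_-=\Gamma_+=\Gamma$ of Proposition~\ref{prop:diseno-bilateral-peso-cantidad}, and then substitute it into the exact formula of Theorem~\ref{thm:estructura-problema-reducido}. The one point needing care is that the eikonal identity holds only for a.e.\ $x$ in $\Omega$, while $A_{p,\theta}(t)$ integrates $|\nabla\theta|^{p-1}$ over the fiber $\Sigma_t\cap\Omega$ with respect to $\mathcal H^{n-1}$. So I must first show that, for a.e.\ $t$, the exceptional set is $\mathcal H^{n-1}$-null on $\Sigma_t$.

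\textbf{Transferring the identity to fibers.} Let
\[
N:=\{x\in\{a<\theta<b\}:\ |\nabla\theta(x)|\neq\Gamma(\theta(x))\},
\]
which has Lebesgue measure zero. Apply the coarea formula \eqref{eq:coarea-capacidad} with $g=\mathbf 1_N$. This gives
\[
\int_a^b \mathcal H^{n-1}(N\cap\Sigma_t)\,\mathrm{d}t=\int_N|\nabla\theta|\,\mathrm{d}x=0,
\]
so $\mathcal H^{n-1}(N\cap\Sigma_t)=0$ for a.e.\ $t\in(a,b)$. For every such $t$ we have $|\nabla\theta|=\Gamma(t)$ at $\mathcal H^{n-1}$-a.e.\ point of $\Sigma_t\cap\Omega$, and integrating over the fiber yields $A_{p,\theta}(t)=\Gamma(t)^{p-1}S_\theta(t)$.

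Two details need handling here. First, $\nabla\theta$ is defined only a.e.; I fix a Borel representative and use the standard fact that the coarea disintegration is insensitive to this choice for a.e.\ $t$. Second, the proposition's hypothesis is stated on $\{a\le\theta\le b\}$, whereas the corollary's is on the open set. The fibers $t=a$ and $t=b$ form a null set of levels, so this difference is irrelevant. This is also why I prefer the direct coarea argument above to quoting Proposition~\ref{prop:diseno-bilateral-peso-cantidad} verbatim.

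\textbf{Resistance formula.} Since $p-1>0$, for a.e.\ $t$,
\[
A_{p,\theta}(t)^{-\frac1{p-1}}=\Gamma(t)^{-1}S_\theta(t)^{-\frac1{p-1}}.
\]
Here I use the convention $0^{-1}=+\infty$, consistent with Remark~\ref{rem:anulacion-peso-colapso-capacidad-reducida}. Integrating over $(a,b)$ and inserting the result into the exact formula of Theorem~\ref{thm:estructura-problema-reducido} gives \eqref{eq:regimen-exacto-cantidad-reducida} whenever the integral is finite. If the integral is infinite, the same theorem gives $\operatorname{cap}_{p,\theta}(a,b)=0$, which agrees with the convention $(+\infty)^{1-p}=0$.

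The main (mild) obstacle is the null-set transfer in the first step. Everything after it is substitution.
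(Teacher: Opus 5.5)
Your proof is correct and follows the paper's route: specialize the two-sided weight estimate to $\Gamma_-=\Gamma_+=\Gamma$ and $m=M=S_\theta$, then substitute the resulting identity into the exact formula of Theorem~\ref{thm:estructura-problema-reducido}. Your coarea argument with $g=\mathbf 1_N$, together with your remark on the open versus closed level set, makes explicit the passage from Lebesgue-a.e.\ to $\mathcal H^{n-1}$-a.e.\ on almost every fiber; the paper's proof of Proposition~\ref{prop:diseno-bilateral-peso-cantidad} uses this step without comment.
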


\begin{proof}
	It is enough to apply Proposition~\ref{prop:diseno-bilateral-peso-cantidad} with
	\[
	\Gamma_-=\Gamma_+=\Gamma,
	\qquad
	m=M=S_\theta.
	\]
	The identity \eqref{eq:regimen-exacto-peso} follows directly from the definition of $A_{p,\theta}$, and substituting it into the exact formula for the reduced capacity yields \eqref{eq:regimen-exacto-cantidad-reducida}.
\end{proof}

In this regime, the reduced capacity is determined by the gradient law $\Gamma$ and by the size of the fiber.
\section{Critical regimes of the phase and proof of Theorem~\ref{thm:umbral-local-transmisibilidad}}
\label{sec:regimenes-criticos-theta}

In this section we study the local behaviour of the reduced problem near a critical level of the phase $\theta$. The analysis is formulated through a localized energy weight and leads to an explicit threshold of transmissibility in terms of the degeneration of the gradient and the geometric collapse of the fibers.

\subsection{Local weight and critical mechanism}
\label{subsec:peso-local-mecanismo-critico}

Let $\theta:\Omega\to\mathbb R$ be a phase in $W^{1,\infty}(\Omega)$. We fix a point $x_0\in\Omega$ such that
\[
\nabla\theta(x_0)=0,
\qquad
t_0:=\theta(x_0),
\]
and choose an open set $U\subset\Omega$ with $x_0\in U$.

\begin{definition}[Critical local weight]
	\label{def:peso-local-critico}
	For almost every $t$ near $t_0$, we define
	\[
	A_{p,\theta}^{U}(t)
	:=
	\int_{\Sigma_t\cap U}
	|\nabla\theta(x)|^{p-1}\,\mathrm{d}\mathcal H^{n-1}(x).
	\]
\end{definition}

We also use the notation
\[
S_\theta^{U}(t):=\mathcal H^{n-1}(\Sigma_t\cap U)
\]
for the localized size of the fiber.

The quantity $A_{p,\theta}^{U}(t)$ represents the energy contribution of the region $U$ at level $t$, while $S_\theta^{U}(t)$ records the corresponding local size of the fiber. Localization is essential, since a global fiber $\Sigma_t$ may contain components far from $x_0$ whose behaviour does not reflect the critical regime under consideration.

\begin{remark}[Local weight versus global weight]
	\label{rem:peso-local-vs-global}
	The global weight
	\[
	A_{p,\theta}(t)
	=
	\int_{\Sigma_t\cap\Omega}
	|\nabla\theta(x)|^{p-1}\,\mathrm{d}\mathcal H^{n-1}(x)
	\]
may remain uniformly positive even if there exists a local critical region where the gradient degenerates. For this reason, the analysis of the effect of a critical level on the reduced problem is first formulated through a localized weight such as $A_{p,\theta}^{U}$.
\end{remark}

The local structure of the weight depends on two ingredients:

\begin{itemize}
	\item the degeneration of the gradient on fibers near the critical level;
	\item the geometric variation of the size of the localized fibers.
\end{itemize}

The degeneration of the gradient can be controlled, for example, by means of a \L ojasiewicz-type inequality; see \cite{Chill2003}. If $\theta$ is analytic in a neighbourhood of $x_0$, then there exist an open set $U$ with $x_0\in U$, a constant $C_0>0$, and an exponent $\alpha\in[0,1)$ such that
\begin{equation}
	\label{eq:lojasiewicz-gradiente}
	|\nabla\theta(x)|
	\ge
	C_0\,|\theta(x)-t_0|^\alpha
	\qquad
	\text{for all } x\in U.
\end{equation}
In particular, for $t$ sufficiently close to $t_0$, on the localized fiber $\Sigma_t\cap U$ one obtains
\begin{equation}
	\label{eq:lojasiewicz-fibra-local}
	|\nabla\theta(x)|
	\ge
	C_0\,|t-t_0|^\alpha
	\qquad
	\text{for all } x\in \Sigma_t\cap U.
\end{equation}

The geometric collapse is encoded in the measure of the localized fiber. If there exists an exponent $\nu\ge 0$ such that
\begin{equation}
	\label{eq:fibra-colapso}
	S_\theta^{U}(t)
	\asymp
	|t-t_0|^\nu
	\qquad
	\text{as } t\to t_0^+,
\end{equation}
then the size of the fiber collapses with order $\nu$ as one approaches the critical level.

From \eqref{eq:lojasiewicz-fibra-local} and \eqref{eq:fibra-colapso} one obtains an explicit lower bound for the local weight.

\begin{lemma}[Local lower bound for the critical weight]
	\label{lem:cota-local-inferior-peso-critico}
	Suppose that \eqref{eq:lojasiewicz-gradiente} holds in $U$ with exponent $\alpha\in[0,1)$, and that moreover
	\[
	S_\theta^{U}(t)
	\asymp
	|t-t_0|^\nu
	\qquad
	\text{as } t\to t_0^+,
	\]
	for some $\nu\ge 0$. Then
	\begin{equation}
		\label{eq:cota-local-peso-critico}
		A_{p,\theta}^{U}(t)
		\gtrsim
		|t-t_0|^{\alpha(p-1)+\nu}
		\qquad
		\text{as } t\to t_0^+.
	\end{equation}
\end{lemma}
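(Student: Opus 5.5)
The plan is a direct pointwise-then-integrate argument on the localized fiber, exactly parallel to the proof of Proposition~\ref{prop:diseno-bilateral-peso-cantidad}, but using only the lower bounds and restricted to $U$.

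First, fix $\delta>0$ small so that the two-sided relation for $S_\theta^U$ holds on $(t_0,t_0+\delta)$. Concretely, there is $c>0$ with
\[
S_\theta^U(t)\ge c\,(t-t_0)^\nu
\]
for a.e.\ $t$ in this interval. Only the lower half of $\asymp$ is needed.

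Second, use \eqref{eq:lojasiewicz-fibra-local}. For every $t$ in this range and every $x\in\Sigma_t\cap U$ one has $\theta(x)=t$, so
\[
|\nabla\theta(x)|\ge C_0|t-t_0|^\alpha .
\]
Since $p>1$, the map $s\mapsto s^{p-1}$ is increasing on $[0,\infty)$. Raising to the power $p-1$ gives
\[
|\nabla\theta(x)|^{p-1}\ge C_0^{p-1}(t-t_0)^{\alpha(p-1)}
\]
on the localized fiber.

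Third, integrate this inequality with respect to $\mathcal H^{n-1}$ over $\Sigma_t\cap U$. This gives
\[
A_{p,\theta}^U(t)\ge C_0^{p-1}(t-t_0)^{\alpha(p-1)}S_\theta^U(t)\ge C_0^{p-1}c\,(t-t_0)^{\alpha(p-1)+\nu}
\]
for a.e.\ $t\in(t_0,t_0+\delta)$, which is \eqref{eq:cota-local-peso-critico}.

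The only delicate point is a measure-theoretic one. Since $\theta$ is merely Lipschitz, $\nabla\theta$ is defined only a.e.\ in $\Omega$, so I must check that it is defined $\mathcal H^{n-1}$-a.e.\ on $\Sigma_t\cap U$ for a.e.\ $t$. I would settle this with the coarea formula \eqref{eq:coarea-capacidad}, applied to the indicator of the null set $N\cap U$ where $\nabla\theta$ fails to exist or \eqref{eq:lojasiewicz-gradiente} fails. It yields
\[
\mathcal H^{n-1}(N\cap\Sigma_t\cap U)=0
\]
for a.e.\ $t$. For those $t$ the pointwise bound holds $\mathcal H^{n-1}$-a.e.\ on the fiber, and the integration step is legitimate.

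The measurability of $t\mapsto A^U_{p,\theta}(t)$ also comes from coarea. Everything else is routine.
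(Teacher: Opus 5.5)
Your proof is correct and follows the paper's argument: you raise the \L ojasiewicz bound to the power $p-1$ on $\Sigma_t\cap U$, integrate over the localized fiber to get $A^{U}_{p,\theta}(t)\ge C_0^{p-1}|t-t_0|^{\alpha(p-1)}S^{U}_\theta(t)$, and then use only the lower half of the fiber-size equivalence. Your coarea check that $\nabla\theta$ exists $\mathcal H^{n-1}$-a.e.\ on a.e.\ fiber is extra rigor the paper omits. It is automatic in the analytic setting where the \L ojasiewicz inequality is invoked, and it does not change the argument.
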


\begin{proof}
	By \eqref{eq:lojasiewicz-fibra-local}, for every $x\in \Sigma_t\cap U$ one has
	\[
	|\nabla\theta(x)|^{p-1}
	\ge
	C_0^{p-1}|t-t_0|^{\alpha(p-1)}.
	\]
	Integrating over $\Sigma_t\cap U$, we obtain
	\[
	A_{p,\theta}^{U}(t)
	\ge
	C_0^{p-1}|t-t_0|^{\alpha(p-1)}
	S_\theta^{U}(t).
	\]
	Using now \eqref{eq:fibra-colapso}, we conclude \eqref{eq:cota-local-peso-critico}.
\end{proof}

The preceding lemma provides a local lower bound for the critical weight. We next introduce a two-sided hypothesis on that profile in order to obtain a criterion for local integrability of the reduced resistance.

\begin{remark}[On the two-sidedness of the profile]
	\label{rem:bilateralidad-perfil-peso-critico}
	The lower estimate \eqref{eq:cota-local-peso-critico} is a direct consequence of the \L ojasiewicz-type control and of the geometric profile of the fiber. By contrast, a two-sided equivalence
	\begin{equation}
		\label{eq:perfil-peso-local-bilateral}
		A_{p,\theta}^{U}(t)
		\asymp
		|t-t_0|^{\alpha(p-1)+\nu}
		\qquad
		\text{as } t\to t_0^+
	\end{equation}
	requires additional hypotheses: one needs an upper control of the gradient, or a finer local description of the geometry of $\theta$ near the critical level.
\end{remark}

\subsection{Local resistance and transmissibility criterion}
\label{subsec:resistencia-local-criterio-transmisibilidad}

The exact formula for the reduced problem leads to the following effective resistance associated with the local weight.

\begin{definition}[Local resistance]
	\label{def:resistencia-local}
	Let $\delta>0$ be such that $(t_0,t_0+\delta)$ lies within the local regime under consideration. We define
	\[
	R_{p,\theta}^{U}(t_0,\delta)
	:=
	\int_{t_0}^{t_0+\delta}
	\left(A_{p,\theta}^{U}(t)\right)^{-\frac{1}{p-1}}\,\mathrm{d}t.
	\]
\end{definition}

The interpretation is local and refers only to the reduced model associated with the localized weight $A_{p,\theta}^{U}$. In particular, $R_{p,\theta}^{U}(t_0,\delta)$ measures the local integrability of the fibered cost near the critical level. Under an appropriate control of the weight profile, its finiteness or divergence is determined by an explicit threshold.

\begin{lemma}[Local threshold for reduced transmissibility]
	\label{lem:umbral-local-transmisibilidad-reducida}
	Suppose that there exist $\alpha\in[0,1)$ and $\nu\ge 0$ such that
	\begin{equation}
		\label{eq:hipotesis-umbral-local-bilateral}
		A_{p,\theta}^{U}(t)
		\asymp
		|t-t_0|^{\alpha(p-1)+\nu}
		\qquad
		\text{as } t\to t_0^+.
	\end{equation}
	Then
	\begin{equation}
		\label{eq:criterio-umbral-local}
		R_{p,\theta}^{U}(t_0,\delta)<\infty
		\quad\Longleftrightarrow\quad
		\alpha+\frac{\nu}{p-1}<1.
	\end{equation}
	Consequently, under the two-sided hypothesis \eqref{eq:hipotesis-umbral-local-bilateral}, the reduced resistance is integrable on sufficiently small one-sided intervals to the right of $t_0$ if and only if
	\begin{equation}
		\label{eq:transmisibilidad-local-criterio}
		\alpha+\frac{\nu}{p-1}<1.
	\end{equation}
\end{lemma}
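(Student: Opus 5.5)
The plan is to convert the two-sided hypothesis \eqref{eq:hipotesis-umbral-local-bilateral} into a two-sided comparison of the integrand of $R_{p,\theta}^{U}(t_0,\delta)$ with an explicit power of $t-t_0$. Then we apply the elementary integrability criterion for $\int_0^\delta s^{-\beta}\,\mathrm{d}s$.

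\emph{Step 1: comparison of the integrands.} By the meaning of $\asymp$, there are $\delta_1>0$ and $c,C>0$ such that
\[
c\,(t-t_0)^{\gamma}\le A_{p,\theta}^{U}(t)\le C\,(t-t_0)^{\gamma}
\quad\text{for a.e. } t\in(t_0,t_0+\delta_1),
\qquad
\gamma:=\alpha(p-1)+\nu .
\]
Since $s\mapsto s^{-1/(p-1)}$ is decreasing on $(0,\infty)$, raising to this power gives
\[
C^{-\frac{1}{p-1}}(t-t_0)^{-\beta}\le \bigl(A_{p,\theta}^{U}(t)\bigr)^{-\frac{1}{p-1}}\le c^{-\frac{1}{p-1}}(t-t_0)^{-\beta},
\qquad
\beta:=\frac{\gamma}{p-1}=\alpha+\frac{\nu}{p-1}.
\]
The lower bound on $A_{p,\theta}^{U}$ is strictly positive, so no division by zero arises here.

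\emph{Step 2: integrate.} For $\delta\in(0,\delta_1]$, integrate the inequalities of Step 1 over $(t_0,t_0+\delta)$. The integral $\int_0^\delta s^{-\beta}\,\mathrm{d}s$ is finite if and only if $\beta<1$. Comparison then gives $R_{p,\theta}^{U}(t_0,\delta)<\infty$ if and only if $\alpha+\nu/(p-1)<1$, which is \eqref{eq:criterio-umbral-local}.

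\emph{Step 3: any admissible $\delta$.} If $\delta$ exceeds $\delta_1$ but $(t_0,t_0+\delta)$ still lies in the local regime, split $R_{p,\theta}^{U}(t_0,\delta)$ by the series law of Remark~\ref{rem:ley-serie-paralelo}. The divergence or convergence is decided near $t_0$, provided the tail over $(t_0+\delta_1,t_0+\delta)$ is finite. Without a positive lower bound for $A_{p,\theta}^{U}$ on that tail, finiteness of the tail is not guaranteed. This is why the statement is phrased for sufficiently small one-sided intervals, and I would state it that way, restricting to $\delta\le\delta_1$.

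The only delicate points are, first, this bookkeeping about the range of $\delta$, and second, making sure the a.e. two-sided bound holds uniformly on a fixed interval rather than merely asymptotically. Both are handled by fixing $\delta_1$ from the definition of $\asymp$ at the outset. The rest is routine.
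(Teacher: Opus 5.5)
Your proposal is correct and follows the same route as the paper: convert the two-sided hypothesis into $\bigl(A_{p,\theta}^{U}(t)\bigr)^{-1/(p-1)}\asymp (t-t_0)^{-\alpha-\nu/(p-1)}$ and use the integrability criterion for $s^{-\beta}$ near $0$. Fixing $\delta_1$ from the definition of $\asymp$ and restricting to $\delta\le\delta_1$ makes precise what the paper leaves implicit; only the convergence direction needs this restriction, since divergence near $t_0$ persists for every $\delta>0$.
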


\begin{proof}
	Under \eqref{eq:hipotesis-umbral-local-bilateral}, one has
	\[
	\left(A_{p,\theta}^{U}(t)\right)^{-\frac{1}{p-1}}
	\asymp
	|t-t_0|^{-\alpha-\frac{\nu}{p-1}}
	\qquad
	\text{as } t\to t_0^+.
	\]
	Therefore, the finiteness of $R_{p,\theta}^{U}(t_0,\delta)$ is equivalent to the convergence of
	\[
	\int_{t_0}^{t_0+\delta}
	|t-t_0|^{-\alpha-\frac{\nu}{p-1}}\,\mathrm{d}t.
	\]
	This integral converges if and only if
	\[
	\alpha+\frac{\nu}{p-1}<1.
	\]
	This proves \eqref{eq:criterio-umbral-local} and, consequently, \eqref{eq:transmisibilidad-local-criterio}.
\end{proof}

\begin{remark}[Scope of the local criterion]
	\label{rem:alcance-criterio-local}
	The local criterion is formulated at the level of the reduced model.
	In the critical or supercritical regime, once the vanishing of the reduced quantity has been established, the upper bound by fibered restriction implies the corresponding vanishing of the full geometric capacity.
	Outside that regime, comparability or exactness requires an additional comparison step between the fibered problem and the full variational problem.
\end{remark}

The preceding results describe the critical threshold at the level of the localized weight $A_{p,\theta}^{U}$. We next introduce a two-sided hypothesis directly on the global energy weight $A_{p,\theta}$ near $t_0$, in order to obtain a consequence for the global reduced quantity associated with the level interval $(t_0,t_0+\delta)$.

\begin{lemma}[Critical or supercritical local regime and vanishing of the reduced quantity]
	\label{lem:regimen-supercritico-capacidad-cero}
	Let $t_0\in\mathbb{R}$ and suppose that there exist $\delta_0>0$, constants $c_1,c_2>0$, and parameters $\alpha\in[0,1)$, $\nu\ge0$ such that
	\begin{equation}
		\label{eq:perfil-supercritico-local}
		c_1 (t-t_0)^{\alpha(p-1)+\nu}
		\le
		A_{p,\theta}(t)
		\le
		c_2 (t-t_0)^{\alpha(p-1)+\nu}
	\end{equation}
	for almost every $t\in(t_0,t_0+\delta_0)$. If
	\begin{equation}
		\label{eq:umbral-supercritico-local}
		\alpha+\frac{\nu}{p-1}\ge 1,
	\end{equation}
	then, for every $\delta\in(0,\delta_0)$,
	\begin{equation}
		\label{eq:capacidad-reducida-cero-supercritico}
		\operatorname{cap}_{p,\theta}(t_0,t_0+\delta)=0.
	\end{equation}
	Consequently, whenever the sets induced by the phase form an interior condenser,
	\begin{equation}
		\label{eq:capacidad-geometrica-cero-supercritico}
		\operatorname{Cap}_{p,\Omega}(E_{t_0},F_{t_0+\delta})=0.
	\end{equation}
\end{lemma}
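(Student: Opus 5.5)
The plan is to apply the exact formula of Theorem~\ref{thm:estructura-problema-reducido} on the interval $(t_0,t_0+\delta)$ and show that the reduced resistance $R_{p,\theta}(t_0,t_0+\delta)$ diverges. We fix $\delta\in(0,\delta_0)$ and set $\beta:=\alpha(p-1)+\nu$. By \eqref{eq:perfil-supercritico-local}, $A_{p,\theta}(t)>0$ for a.e.\ $t\in(t_0,t_0+\delta)$. Raising the upper bound to the power $-\frac{1}{p-1}$, which reverses inequalities, gives
\[
A_{p,\theta}(t)^{-\frac{1}{p-1}}\ \ge\ c_2^{-\frac{1}{p-1}}\,(t-t_0)^{-\alpha-\frac{\nu}{p-1}}
\qquad\text{for a.e. } t\in(t_0,t_0+\delta).
\]
Only the upper bound $c_2$ is needed for this direction; the lower bound $c_1$ is needed only for the converse implication in Theorem~\ref{thm:umbral-local-transmisibilidad}.

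We then integrate. Under \eqref{eq:umbral-supercritico-local} the exponent satisfies $\alpha+\frac{\nu}{p-1}\ge 1$, so $\int_{t_0}^{t_0+\delta}(t-t_0)^{-\alpha-\nu/(p-1)}\,\mathrm{d}t=+\infty$: the integral diverges logarithmically at equality and as a power otherwise. Hence $R_{p,\theta}(t_0,t_0+\delta)=+\infty$. The convention in Theorem~\ref{thm:estructura-problema-reducido}, equivalently its proof via the truncated profiles $v_k$ with energy at most $I_k^{1-p}\to0$, gives $\operatorname{cap}_{p,\theta}(t_0,t_0+\delta)=0$, which is \eqref{eq:capacidad-reducida-cero-supercritico}.

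For \eqref{eq:capacidad-geometrica-cero-supercritico}, we assume, as the statement phrases it, that $(t_0,t_0+\delta)$ is a pair of admissible levels in the sense of Definition~\ref{def:niveles-admisibles-fase}. Proposition~\ref{prop:niveles-admisibles-implican-condensador} then makes $(E_{t_0},F_{t_0+\delta})$ an interior condenser. Theorem~\ref{thm:cota-superior-fibrada} gives $0\le\operatorname{Cap}_{p,\Omega}(E_{t_0},F_{t_0+\delta})\le\operatorname{cap}_{p,\theta}(t_0,t_0+\delta)=0$.

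The only delicate point is the logic of the infinite-resistance case. The reduced class in Definition~\ref{def:capacidad-reducida} requires finite energy, so we must check that it is nonempty and has infimum $0$. This is exactly what the Lipschitz truncated profiles of Lemma~\ref{lem:aproximacion-lipschitz-problema-reducido} provide: each $v_k$ is admissible with $\mathcal E_{p,\theta}(v_k)\le c_k^{1-p}<\infty$, and $c_k\to\infty$. I would cite that step explicitly rather than rely on the convention alone. The same Lipschitz profiles also feed directly into the proof of Theorem~\ref{thm:cota-superior-fibrada}, so the passage to the geometric capacity is valid even though no minimizer exists.
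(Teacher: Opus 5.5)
Your proposal is correct and follows the paper's route: the weight bound makes $\int_{t_0}^{t_0+\delta}A_{p,\theta}^{-1/(p-1)}\,\mathrm{d}t$ diverge, the exact formula of Theorem~\ref{thm:estructura-problema-reducido} then gives $\operatorname{cap}_{p,\theta}(t_0,t_0+\delta)=0$, and Theorem~\ref{thm:cota-superior-fibrada} transfers the vanishing to $\operatorname{Cap}_{p,\Omega}$. Your refinements are accurate: only the upper bound with $c_2$ is needed, and the truncated Lipschitz profiles handle the infinite-resistance case rigorously. Moreover, the proof of Theorem~\ref{thm:cota-superior-fibrada} uses only the interior-condenser property, so you can weaken your assumption of admissible levels to exactly the hypothesis stated in the lemma.
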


\begin{proof}
	By \eqref{eq:perfil-supercritico-local},
	\[
	A_{p,\theta}(t)^{-1/(p-1)}
	\asymp
	(t-t_0)^{-\alpha-\nu/(p-1)}
	\qquad
	\text{as } t\downarrow t_0.
	\]
	If \eqref{eq:umbral-supercritico-local} holds, then
	\[
	\int_{t_0}^{t_0+\delta} A_{p,\theta}(t)^{-1/(p-1)}\,\mathrm{d}t
	=
	+\infty.
	\]
The exact formula for the reduced problem therefore implies \eqref{eq:capacidad-reducida-cero-supercritico}. By the upper bound by fibered restriction,
\[
\operatorname{Cap}_{p,\Omega}(E_{t_0},F_{t_0+\delta})
\le
\operatorname{cap}_{p,\theta}(t_0,t_0+\delta)
=
0,
\]
and the nonnegativity of the geometric capacity yields \eqref{eq:capacidad-geometrica-cero-supercritico}.
\end{proof}

\paragraph{Proof of Theorem~\ref{thm:umbral-local-transmisibilidad}.}
Under the hypothesis of the theorem, there exist constants $c_1,c_2>0$ such that
\[
c_1 (t-t_0)^{\alpha(p-1)+\nu}
\le
A_{p,\theta}(t)
\le
c_2 (t-t_0)^{\alpha(p-1)+\nu}
\]
for almost every $t\in(t_0,t_0+\delta_0)$. Consequently,
\[
A_{p,\theta}(t)^{-1/(p-1)}
\asymp
(t-t_0)^{-\alpha-\nu/(p-1)}
\qquad
\text{for } t\downarrow t_0.
\]
Therefore, for every $\delta\in(0,\delta_0)$, the integral
\[
\int_{t_0}^{t_0+\delta} A_{p,\theta}(t)^{-1/(p-1)}\,\mathrm{d}t
\]
is finite if and only if
\[
\alpha+\frac{\nu}{p-1}<1.
\]
This proves the first assertion of the theorem.

If in addition
\[
\alpha+\frac{\nu}{p-1}\ge 1,
\]
then Lemma~\ref{lem:regimen-supercritico-capacidad-cero} implies that
\[
\operatorname{cap}_{p,\theta}(t_0,t_0+\delta)=0
\qquad
\text{for every }\delta\in(0,\delta_0).
\]
This concludes the proof of Theorem~\ref{thm:umbral-local-transmisibilidad}.

\begin{remark}
	\label{rem:supercritico-como-criterio-suficiente}
	The preceding result identifies a sufficient criterion for the vanishing of the reduced quantity, determined by the supercritical regime of the effective energy weight. In particular, the critical condition is reflected in the integrability of
	\[
	A_{p,\theta}(t)^{-1/(p-1)}
	\]
	near the level under consideration.
\end{remark}
\section{Examples and explicit models}
\label{sec:ejemplos}

In this section we collect examples in which the reduced capacity $
\operatorname{cap}_{p,\theta}(a,b)$ can be computed explicitly and in which the energy weight $
A_{p,\theta}(t)$ admits a direct geometric interpretation. The models considered isolate the mechanisms identified in the previous sections.

We distinguish two aspects. On the one hand, the reduced problem depends only on the interval of levels crossed and on the weight $A_{p,\theta}$. On the other hand, in a bounded domain, the sets
\[
E_a=\{\theta\le a\},
\qquad
F_b=\{\theta\ge b\}
\]
do not always form an interior condenser, because the upper plate may touch $\partial\Omega$. Consequently, some of the following examples are interpreted primarily as exact models of the reduced problem and as explicit configurations compatible with the same phase.

\subsection{The planar model}
\label{subsec:modelo-plano}

Let
\[
\Omega=(-L,L)\times D\subset\mathbb{R}^n,
\]
where $L>0$ and $D\subset\mathbb{R}^{n-1}$ is a bounded open set. Consider the phase
\[
\theta(x)=x_1.
\]
Then, for $t\in(-L,L)$,
\[
\Sigma_t=\{x\in\Omega:x_1=t\}=\{t\}\times D,
\qquad
|\nabla\theta|=1.
\]
Consequently, the fiber size is constant:
\[
S_\theta(t)=\mathcal H^{n-1}(D).
\]
Since moreover $|\nabla\theta|^{p-1}=1$, one obtains
\begin{equation}
	\label{eq:peso-plano}
	A_{p,\theta}(t)=\mathcal H^{n-1}(D)
	\qquad
	\text{for every } t\in(-L,L).
\end{equation}

Therefore, for every $-L<a<b<L$,
\begin{equation}
	\label{eq:funcional-plano}
	\mathcal{E}_{p,\theta}(v;a,b)
	=
	\mathcal H^{n-1}(D)\int_a^b |v'(t)|^p\,\mathrm{d}t,
\end{equation}
and the exact formula for the reduced problem yields
\begin{equation}
	\label{eq:capacidad-reducida-plano}
	\operatorname{cap}_{p,\theta}(a,b)
	=
	\mathcal H^{n-1}(D)\,(b-a)^{1-p}.
\end{equation}

In this model, the gradient does not degenerate and the fiber size does not vary with the level. Therefore, the reduced cost depends only on the separation $b-a$ between levels.

\begin{remark}
	To construct a concrete interior condenser inside $\Omega$, one can, for example, fix
	\[
	-L<a<b<c<L
	\]
	and take
	\[
	E=\{x\in\Omega:x_1\le a\},
	\qquad
	F=\{x\in\Omega:b\le x_1\le c\}.
	\]
This pair is no longer exactly of the form $(E_a,F_b)$, but it remains compatible with the same linear phase and preserves the same level structure on the interval $(a,b)$.
\end{remark}

\subsection{The radial model}
\label{subsec:modelo-radial}

Let
\[
\Omega=B(0,R)\subset\mathbb{R}^n,
\]
and consider the radial phase
\[
\theta(x)=|x|.
\]
Fix radii
\[
0<r_E<r_F<R,
\]
and let the levels of the phase determine the sets
\[
E_{r_E}=\{x\in\Omega:\theta(x)\le r_E\}=\overline{B(0,r_E)},
\qquad
F_{r_F}=\{x\in\Omega:\theta(x)\ge r_F\}=\{x\in\Omega:|x|\ge r_F\}.
\]
Here $E_{r_E}$ is strictly contained in $\Omega$, whereas $F_{r_F}$ in general touches the boundary $\partial\Omega$. Therefore, at this stage this pair is not regarded as an interior condenser in the sense fixed in the setup, but rather as an exact radial model of the reduced problem associated with the phase $\theta(x)=|x|$. To obtain an interior condenser compatible with the same radial structure, it is enough to truncate the outer plate and consider an annulus
\[
F=\{x\in\mathbb R^n:r_F\le |x|\le R_F\},
\qquad r_F<R_F<R.
\]
The subsection on exactness deals precisely with this configuration.

For each $t\in(0,R)$ one has
\[
\Sigma_t=\theta^{-1}(t)=\partial B(0,t),
\qquad
|\nabla\theta(x)|=1
\quad\text{for }x\neq 0.
\]
Consequently,
\[
S_\theta(t)=\mathcal H^{n-1}(\Sigma_t)=\omega_{n-1}t^{n-1},
\]
where
\[
\omega_{n-1}:=\mathcal H^{n-1}(S^{n-1}).
\]
Since moreover $|\nabla\theta|^{p-1}=1$ on each fiber, the energy weight coincides with the fiber size:
\begin{equation}
	\label{eq:peso-radial-ejemplo}
	A_{p,\theta}(t)=\omega_{n-1}t^{n-1},
	\qquad t\in(r_E,r_F).
\end{equation}

The reduced problem associated with this phase then takes the form
\begin{equation}
	\label{eq:funcional-radial}
	\mathcal E_{p,\theta}(v;r_E,r_F)
	=
	\omega_{n-1}\int_{r_E}^{r_F}|v'(t)|^p t^{n-1}\,\mathrm{d}t,
\end{equation}
for admissible functions $v$ such that
\[
v(r_E)=0,
\qquad
v(r_F)=1.
\]
Applying the exact formula for the reduced quantity, we obtain
\begin{equation}
	\label{eq:capacidad-reducida-radial}
	\operatorname{cap}_{p,\theta}(r_E,r_F)
	=
	\omega_{n-1}
	\left(
	\int_{r_E}^{r_F} t^{-\frac{n-1}{p-1}}\,\mathrm{d}t
	\right)^{1-p}.
\end{equation}

When $p\neq n$, the integral can be evaluated explicitly and yields
\begin{equation}
	\label{eq:capacidad-reducida-radial-pneqn}
	\operatorname{cap}_{p,\theta}(r_E,r_F)
	=
	\omega_{n-1}
	\left[
	\frac{p-1}{p-n}
	\left(
	r_F^{\frac{p-n}{p-1}}-r_E^{\frac{p-n}{p-1}}
	\right)
	\right]^{1-p}.
\end{equation}
In the critical case $p=n$, by contrast,
\begin{equation}
	\label{eq:capacidad-reducida-radial-peqn}
	\operatorname{cap}_{p,\theta}(r_E,r_F)
	=
	\omega_{n-1}
	\left(
	\log\frac{r_F}{r_E}
	\right)^{1-p}.
\end{equation}

In this model, the gradient of the phase remains regular, while the structure of the reduced cost is encoded in the variation of the size of the spherical fibers. In particular, the threshold $p=n$ separates a power-law regime from a logarithmic regime.

\begin{remark}
	At this stage, this example identifies the exact radial model of the reduced problem underlying the subsequent discussion of exactness. In particular, it identifies the energy weight
	\[
	A_{p,\theta}(t)=\omega_{n-1}t^{n-1}
	\]
	and the corresponding closed formula for
	\[
	\operatorname{cap}_{p,\theta}(r_E,r_F).
	\]
	The comparison with the full geometric capacity
	\[
	\operatorname{Cap}_{p,\Omega}(E_{r_E},F_{r_F})
	\]
	is postponed to the subsequent subsection.
\end{remark}

\subsection{The degenerate monomial model}
\label{subsec:modelo-monomial}

Let again
\[
\Omega=(-L,L)\times D\subset\mathbb{R}^n,
\]
with $L>0$ and $D\subset\mathbb{R}^{n-1}$ a bounded open set, and consider the phase
\[
\theta(x)=|x_1|^\gamma,
\qquad
\gamma>1.
\]
For $t>0$, the fibers are formed by two planar sections:
\[
\Sigma_t
=
\{x\in\Omega:x_1=t^{1/\gamma}\}
\cup
\{x\in\Omega:x_1=-t^{1/\gamma}\}.
\]
Moreover,
\[
|\nabla\theta(x)|=\gamma |x_1|^{\gamma-1}.
\]
On the level fiber $t$, this implies
\[
|\nabla\theta|\asymp t^{\frac{\gamma-1}{\gamma}}.
\]

Since each section has transversal area comparable to $\mathcal H^{n-1}(D)$, the total size of the fiber remains bounded above and below by positive constants independent of $t$. In particular,
\[
S_\theta(t)\asymp 1
\qquad
\text{as } t\to 0^+.
\]
Therefore,
\begin{equation}
	\label{eq:peso-monomial}
	A_{p,\theta}(t)
	\asymp
	t^{\frac{(\gamma-1)(p-1)}{\gamma}}
	\qquad
	\text{as } t\to 0^+.
\end{equation}
Consequently,
\begin{equation}
	\label{eq:resistencia-monomial}
	\left(A_{p,\theta}(t)\right)^{-\frac{1}{p-1}}
	\asymp
	t^{-\left(1-\frac{1}{\gamma}\right)}.
\end{equation}

Since
\[
1-\frac{1}{\gamma}<1,
\]
the local reduced resistance
\[
\int_0^\delta \left(A_{p,\theta}(t)\right)^{-\frac{1}{p-1}}\,\mathrm{d}t
\]
is finite for every sufficiently small $\delta>0$. Therefore, a monomial degeneracy of the gradient, by itself, does not force the collapse of the reduced capacity if the fiber size does not degenerate simultaneously.

Moreover, this example fits the local criterion of Section~\ref{sec:regimenes-criticos-theta}: here the exponent of gradient degeneracy is
\[
\alpha=1-\frac{1}{\gamma},
\]
whereas the geometric exponent of fiber collapse is
\[
\nu=0.
\]
Therefore,
\[
\alpha+\frac{\nu}{p-1}=1-\frac{1}{\gamma}<1,
\]
and the local regime remains transmissive.

\subsection{Symmetric models and exactness}
\label{subsec:modelos-simetricos-exactitud-clasica}

The following examples correspond to models in which the symmetry of the fibers makes it possible to verify explicitly that the reduction by a phase coincides with the full geometric capacity.

\label{subsubsec:modelo-plano-exacto}

In the planar model we fix levels
\[
-L<a<b<c<L
\]
and define the interior plates
\[
E:=\{x\in\Omega:x_1\le a\},
\qquad
F:=\{x\in\Omega:b\le x_1\le c\}.
\]
Then $(E,F)$ is an interior condenser in $\Omega$. By \eqref{eq:peso-plano} and \eqref{eq:capacidad-reducida-plano},
\[
A_{p,\theta}(t)=\mathcal H^{n-1}(D),
\qquad
\operatorname{cap}_{p,\theta}(a,b)
=
\mathcal H^{n-1}(D)\,(b-a)^{1-p}.
\]

\begin{proposition}[Exactness in the planar model]
	\label{prop:exactitud-modelo-plano}
	With the notation of the planar model,
	\[
	\operatorname{Cap}_{p,\Omega}(E,F)
	=
	\mathcal H^{n-1}(D)\,(b-a)^{1-p}
	=
	\operatorname{cap}_{p,\theta}(a,b).
	\]
\end{proposition}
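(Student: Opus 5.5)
The proof consists of an upper bound and a matching lower bound.

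\emph{Upper bound.} Take the truncated linear profile in $x_1$, namely $u(x)=\widetilde v(x_1)$ with
\[
\widetilde v(t)=\min\{1,\max\{0,(t-a)/(b-a)\}\}\quad\text{for } t\le c,
\]
extended by $1$ for $t\ge c$. This $u$ is Lipschitz, vanishes on $E$ and equals $1$ on $F$, so $u\in\mathcal A_{E,F}(\Omega)$. Its gradient equals $(b-a)^{-1}e_1$ on the slab $\{a<x_1<b\}$ and vanishes elsewhere. Hence
\[
\int_\Omega|\nabla u|^p\,\mathrm{d}x=\mathcal H^{n-1}(D)(b-a)^{1-p}.
\]
The same conclusion also follows from Proposition~\ref{prop:identidad-rigurosa-energia-fibrada} together with \eqref{eq:capacidad-reducida-plano}. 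Note that Theorem~\ref{thm:cota-superior-fibrada} cannot be quoted literally here, since $(E,F)$ is not of the form $(E_a,F_b)$. The argument is nevertheless identical, because the admissible function only needs to equal $1$ on the slab $\{b\le x_1\le c\}$.

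\emph{Lower bound.} By Remark~\ref{rem:truncacion-clase-admisible} we may restrict to competitors $u\in\mathcal A_{E,F}(\Omega)$ with $0\le u\le1$. By density of smooth functions and the quasi-continuity of Sobolev functions, I would argue first for $u\in W^{1,p}(\Omega)$ and use its ACL representative. For $\mathcal H^{n-1}$-a.e.\ $y\in D$, the function $s\mapsto u(s,y)$ is absolutely continuous on $(-L,L)$, with derivative $\partial_1u(s,y)$.

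The delicate point is transferring the quasi-everywhere plate conditions to these a.e.\ lines. Because $E$ and $F$ have nonempty interior (they contain the open slabs $\{x_1<a\}$ and $\{b<x_1<c\}$), the conditions $u\le0$ q.e.\ on $E$ and $u\ge1$ q.e.\ on $F$ give $u\le0$ a.e.\ on $\{x_1<a\}$ and $u\ge1$ a.e.\ on $\{b<x_1<c\}$. By Fubini, for a.e.\ $y$ the line restriction satisfies $u(s,y)\le0$ for a.e.\ $s<a$ and $u(s,y)\ge1$ for a.e.\ $s\in(b,c)$. Continuity in $s$ then yields $u(a,y)\le0$ and $u(b,y)\ge1$.

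Then, for a.e.\ $y$, Hölder's inequality gives
\[
1\le \int_a^b\partial_1u(s,y)\,\mathrm{d}s\le (b-a)^{1-1/p}\Bigl(\int_a^b|\partial_1u(s,y)|^p\,\mathrm{d}s\Bigr)^{1/p}.
\]
Raising to the power $p$ and integrating over $y\in D$ yields
\[
\int_\Omega|\nabla u|^p\,\mathrm{d}x\ge\int_D\int_a^b|\partial_1u|^p\,\mathrm{d}s\,\mathrm{d}y\ge \mathcal H^{n-1}(D)(b-a)^{1-p}.
\]
Taking the infimum over admissible $u$ gives the lower bound.

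Combining both bounds gives the claimed equality with $\operatorname{cap}_{p,\theta}(a,b)$. The main obstacle is the lower-bound step that passes from q.e.\ boundary conditions to boundary values on a.e.\ line. I expect the interior-slab argument above to handle this cleanly, since the plates are fat sets and the relevant endpoint values at $s=a$ and $s=b$ are obtained from one-dimensional continuity rather than from capacity theory directly.
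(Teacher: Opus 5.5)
Your proof is correct, and it reaches the lower bound by a different and more elementary route than the paper.

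The paper's route is as follows:
\begin{itemize}
\item It averages a truncated competitor over the cross-section, $\overline u(t)=\mathcal H^{n-1}(D)^{-1}\int_D u(t,y)\,\mathrm{d}y$.
\item It argues via Fubini that $\overline u$ is a one-dimensional Sobolev function whose derivative is the average of $\partial_1u$.
\item It applies Jensen to get $\mathcal H^{n-1}(D)\int_a^b|\overline u'|^p\,\mathrm{d}t\le\int_\Omega|\nabla u|^p\,\mathrm{d}x$.
\item It then invokes the explicit formula of Theorem~\ref{thm:estructura-problema-reducido} for $\overline u$.
\end{itemize}

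You instead apply the one-dimensional H\"older bound on each line $s\mapsto u(s,y)$ and then integrate in $y$. These are the same inequalities in the opposite order. Your version buys several things:
\begin{itemize}
\item It needs only the ACL representative and Fubini, and never has to justify differentiating the transverse average.
\item It does not use the truncation.
\item It makes the endpoint values explicit, via continuity on each line across the fat slabs $\{x_1<a\}$ and $\{b<x_1<c\}$. The paper instead reads off $\overline u(a)=0$ and $\overline u(b)=1$ from a.e.\ information outside $(a,b)$, after establishing only $\overline u\in W^{1,p}(a,b)$. That tacitly requires running its Fubini argument on a larger interval.
\end{itemize}

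Your upper bound is also more precise than the paper's appeal to fibered restriction. You give an explicit competitor and correctly note that Theorem~\ref{thm:cota-superior-fibrada} does not literally apply to this $(E,F)$. Neither argument uses the interior-condenser property, which in fact fails here: $\overline E$ and $\overline F$ meet $(-L,L)\times\partial D\subset\partial\Omega$.

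What the paper's route buys is conceptual. The averaging map sends every competitor into the fibered class without increasing energy, so exactness reads directly as full problem $\ge$ reduced problem. The same template also transfers verbatim to the spherical averages of the radial model.

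Your preliminary sentence about density and quasi-continuity is unnecessary. The ACL characterization applies directly to $u\in W^{1,p}(\Omega)$.
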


\begin{proof}
	The inequality
	\[
	\operatorname{Cap}_{p,\Omega}(E,F)
	\le
	\operatorname{cap}_{p,\theta}(a,b)
	\]
	has already been obtained by fibered restriction.

	Let now
	\[
	u\in \mathcal A_{E,F}(\Omega).
	\]
	By Remark~\ref{rem:truncacion-clase-admisible}, we may replace \(u\) by its truncation
	\[
	T(u):=\min\{1,\max\{0,u\}\},
	\]
	without increasing the energy. We keep the notation \(u\) for this truncation. Then
	\[
	0\le u\le 1
	\qquad\text{a.e. in }\Omega.
	\]
	Since \(u\le 0\) q.e. on \(E\) and \(u\ge 1\) q.e. on \(F\), it follows that
	\[
	u=0 \quad\text{q.e. on }E,
	\qquad
	u=1 \quad\text{q.e. on }F.
	\]
	Because sets of zero Sobolev \(p\)-capacity have Lebesgue measure zero, these identities also hold a.e. on the plates. In particular,
	\[
	u(t,y)=0
	\quad\text{for a.e. }(t,y)\in E,
	\qquad
	u(t,y)=1
	\quad\text{for a.e. }(t,y)\in F.
	\]

	Define the transverse average
	\[
	\overline u(t)
	:=
	\frac{1}{\mathcal H^{n-1}(D)}
	\int_D u(t,y)\,\mathrm{d}y,
	\qquad t\in(-L,L).
	\]
	By Fubini, we obtain
	\[
	\overline u(t)=0
	\quad\text{for a.e. } t\le a,
	\qquad
	\overline u(t)=1
	\quad\text{for a.e. } b\le t\le c.
	\]
	In particular, the restriction of \(\overline u\) to \((a,b)\) is admissible for the planar reduced problem.

	Moreover, since \(u\in W^{1,p}((a,b)\times D)\), Fubini's theorem implies that for a.e. \(y\in D\) the section
	\[
	t\longmapsto u(t,y)
	\]
	belongs to \(W^{1,p}(a,b)\), with weak derivative \(\partial_1u(\cdot,y)\). Consequently, the transverse average \(\overline u\) belongs to \(W^{1,p}(a,b)\) and satisfies, for a.e. \(t\in(a,b)\),
	\[
	\overline u'(t)
	=
	\frac{1}{\mathcal H^{n-1}(D)}
	\int_D \partial_1u(t,y)\,\mathrm{d}y.
	\]

	By Jensen's inequality,
	\[
	|\overline u'(t)|^p
	\le
	\frac{1}{\mathcal H^{n-1}(D)}
	\int_D |\partial_1u(t,y)|^p\,\mathrm{d}y
	\le
	\frac{1}{\mathcal H^{n-1}(D)}
	\int_D |\nabla u(t,y)|^p\,\mathrm{d}y.
	\]
	Integrating over \((a,b)\), we get
	\[
	\mathcal H^{n-1}(D)\int_a^b |\overline u'(t)|^p\,\mathrm{d}t
	\le
	\int_{(a,b)\times D} |\nabla u(x)|^p\,\mathrm{d}x
	\le
	\int_\Omega |\nabla u(x)|^p\,\mathrm{d}x.
	\]

	On the other hand, by the explicit formula for the reduced quantity in the planar model,
	\[
	\mathcal H^{n-1}(D)\int_a^b |\overline u'(t)|^p\,\mathrm{d}t
	\ge
	\mathcal H^{n-1}(D)\,(b-a)^{1-p}.
	\]
	Therefore,
	\[
	\int_\Omega |\nabla u(x)|^p\,\mathrm{d}x
	\ge
	\mathcal H^{n-1}(D)\,(b-a)^{1-p}.
	\]
	Taking the infimum over all \(u\in\mathcal A_{E,F}(\Omega)\), we conclude that
	\[
	\operatorname{Cap}_{p,\Omega}(E,F)
	\ge
	\mathcal H^{n-1}(D)\,(b-a)^{1-p}.
	\]
	Together with the opposite inequality, this yields
	\[
	\operatorname{Cap}_{p,\Omega}(E,F)
	=
	\mathcal H^{n-1}(D)\,(b-a)^{1-p}
	=
	\operatorname{cap}_{p,\theta}(a,b).
	\]
\end{proof}

\label{subsubsec:modelo-radial-exacto}

We now fix radii
\[
0<r_E<r_F<R_F<R
\]
and define
\[
E:=\overline{B(0,r_E)},
\qquad
F:=\{x\in\mathbb R^n:r_F\le |x|\le R_F\}.
\]
Then $(E,F)$ is an interior condenser in $\Omega$. By \eqref{eq:peso-radial-ejemplo} and \eqref{eq:capacidad-reducida-radial},
\[
A_{p,\theta}(t)=\omega_{n-1}t^{n-1},
\qquad
\operatorname{cap}_{p,\theta}(r_E,r_F)
=
\omega_{n-1}
\left(
\int_{r_E}^{r_F} t^{-\frac{n-1}{p-1}}\,\mathrm{d}t
\right)^{1-p}.
\]

\begin{proposition}[Exactness in the radial model]
	\label{prop:exactitud-modelo-radial}
	With the notation of the radial model,
	\[
	\operatorname{Cap}_{p,\Omega}(E,F)
	=
	\omega_{n-1}
	\left(
	\int_{r_E}^{r_F} t^{-\frac{n-1}{p-1}}\,\mathrm{d}t
	\right)^{1-p}.
	\]
	In particular,
	\[
	\operatorname{Cap}_{p,\Omega}(E,F)
	=
	\operatorname{cap}_{p,\theta}(r_E,r_F).
	\]
\end{proposition}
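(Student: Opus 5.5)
The argument mirrors the planar case, replacing the transverse average by a spherical average and Jensen's inequality by a fiberwise Jensen/Hölder bound against the weight $A_{p,\theta}(t)=\omega_{n-1}t^{n-1}$. The upper bound $\operatorname{Cap}_{p,\Omega}(E,F)\le\operatorname{cap}_{p,\theta}(r_E,r_F)$ does not follow verbatim from Theorem~\ref{thm:cota-superior-fibrada}, since $F$ is an annulus rather than $F_{r_F}$. We therefore repeat its proof. By Lemma~\ref{lem:aproximacion-lipschitz-problema-reducido}, pick Lipschitz $v_\varepsilon$ on $[r_E,r_F]$ with $v_\varepsilon(r_E)=0$, $v_\varepsilon(r_F)=1$ and energy within $\varepsilon$ of the optimum. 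Extend $v_\varepsilon$ by $0$ below $r_E$ and by $1$ above $r_F$, and set $u_\varepsilon=\widetilde v_\varepsilon(|x|)$. Then $u_\varepsilon=0$ on $E$ and $u_\varepsilon=1$ on $\{|x|\ge r_F\}\supset F$. Proposition~\ref{prop:identidad-rigurosa-energia-fibrada} gives its energy, and letting $\varepsilon\downarrow0$ yields the upper bound.

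For the lower bound, take $u\in\mathcal A_{E,F}(\Omega)$ and truncate it to $0\le u\le1$ via Remark~\ref{rem:truncacion-clase-admisible}. As in the planar proof, $u=0$ a.e.\ on $E$ and $u=1$ a.e.\ on $F$. Work in polar coordinates $x=t\sigma$ on the annulus $r_E/2<|x|<R_F$ and define the spherical mean
\[
\overline u(t):=\frac{1}{\omega_{n-1}}\int_{S^{n-1}}u(t\sigma)\,\mathrm d\mathcal H^{n-1}(\sigma).
\]
Since $u\in W^{1,p}$ of the annulus, the map $(t,\sigma)\mapsto u(t\sigma)$ lies in $W^{1,p}$ of $(r_E/2,R_F)\times S^{n-1}$, because the change of variables is bi-Lipschitz there. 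Fubini then shows that for a.e.\ $\sigma$ the radial section is absolutely continuous with derivative $\partial_t u(t\sigma)=\nabla u(t\sigma)\cdot\sigma$. Hence $\overline u\in W^{1,p}(r_E/2,R_F)$ with $\overline u'(t)=\omega_{n-1}^{-1}\int_{S^{n-1}}\nabla u(t\sigma)\cdot\sigma\,\mathrm d\sigma$. Moreover $\overline u=0$ for a.e.\ $t<r_E$ and $\overline u=1$ for a.e.\ $t\in(r_F,R_F)$. By continuity of the absolutely continuous representative, $\overline u(r_E)=0$ and $\overline u(r_F)=1$, so $\overline u|_{[r_E,r_F]}$ is admissible for the reduced problem.

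Jensen's inequality on the sphere gives $|\overline u'(t)|^p\le\omega_{n-1}^{-1}\int_{S^{n-1}}|\nabla u(t\sigma)|^p\,\mathrm d\sigma$. Multiplying by $\omega_{n-1}t^{n-1}$ and integrating over $(r_E,r_F)$ yields
\[
\mathcal E_{p,\theta}(\overline u;r_E,r_F)=\int_{r_E}^{r_F}|\overline u'(t)|^p\,\omega_{n-1}t^{n-1}\,\mathrm dt\le\int_{\{r_E<|x|<r_F\}}|\nabla u|^p\,\mathrm dx\le\int_\Omega|\nabla u|^p\,\mathrm dx,
\]
using polar integration $\mathrm dx=t^{n-1}\,\mathrm dt\,\mathrm d\sigma$. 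Theorem~\ref{thm:estructura-problema-reducido} bounds the left side below by $\operatorname{cap}_{p,\theta}(r_E,r_F)$, whose closed form is \eqref{eq:capacidad-reducida-radial}. Taking the infimum over $u$ finishes the proof.

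The main technical point is the Sobolev regularity of $\overline u$ and its boundary values, that is, justifying differentiation under the spherical integral. I would handle this exactly as in the planar proof, via the bi-Lipschitz polar change of variables away from the origin. Away from the origin is enough, since only $t\ge r_E/2>0$ is needed.
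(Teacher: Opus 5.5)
Your proof is correct and follows the paper's argument. The upper bound is the fibered restriction $u=\widetilde v(|x|)$; the lower bound uses truncation, the spherical mean $\overline u$, Jensen's inequality on $S^{n-1}$, and the reduced lower bound against $A_{p,\theta}(t)=\omega_{n-1}t^{n-1}$.

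Your deviations are minor and in fact tighten the paper's write-up. You justify the regularity of $\overline u$ via the bi-Lipschitz polar change of variables on the larger annulus $r_E/2<|x|<R_F$, rather than by smooth approximation on $B(0,r_F)\setminus\overline{B(0,r_E)}$. This also gives the endpoint values $\overline u(r_E)=0$ and $\overline u(r_F)=1$ by continuity, a point the paper passes over. You also correctly re-derive the upper bound instead of citing Theorem~\ref{thm:cota-superior-fibrada}, which does not apply verbatim since $F\neq F_{r_F}$.
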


\begin{proof}
	The inequality
	\[
	\operatorname{Cap}_{p,\Omega}(E,F)
	\le
	\operatorname{cap}_{p,\theta}(r_E,r_F)
	\]
	has already been obtained by fibered restriction, using admissible functions of the form
	\[
	u(x)=v(\theta(x))=v(|x|).
	\]

	Let now
	\[
	u\in \mathcal A_{E,F}(\Omega).
	\]
	By Remark~\ref{rem:truncacion-clase-admisible}, we may replace \(u\) by its truncation
	\[
	T(u):=\min\{1,\max\{0,u\}\},
	\]
	without increasing the energy. We keep the notation \(u\) for this truncation. Then
	\[
	0\le u\le 1
	\qquad\text{a.e. in }\Omega.
	\]
	Since \(u\le 0\) q.e. on \(E\) and \(u\ge 1\) q.e. on \(F\), it follows that
	\[
	u=0 \quad\text{q.e. on }E,
	\qquad
	u=1 \quad\text{q.e. on }F.
	\]
	Because sets of zero Sobolev \(p\)-capacity have Lebesgue measure zero, these identities also hold a.e. on the plates. In particular,
	\[
	u(x)=0
	\quad\text{for a.e. }x\in E,
	\qquad
	u(x)=1
	\quad\text{for a.e. }x\in F.
	\]

	Define the spherical average
	\[
	\overline u(r)
	:=
	\frac{1}{\omega_{n-1}r^{n-1}}
	\int_{\partial B(0,r)} u\,\mathrm{d}\mathcal H^{n-1},
	\qquad r\in(0,R).
	\]
	By Fubini in polar coordinates, it follows that
	\[
	\overline u(r)=0
	\quad\text{for a.e. } 0<r\le r_E,
	\qquad
	\overline u(r)=1
	\quad\text{for a.e. } r_F\le r\le R_F.
	\]
	In particular, the restriction of \(\overline u\) to \((r_E,r_F)\) is admissible for the radial reduced problem.

	Set
	\[
	A_{r_E,r_F}:=B(0,r_F)\setminus \overline{B(0,r_E)}.
	\]
	Since \(u\in W^{1,p}(A_{r_E,r_F})\) and this annulus is separated from the origin, there exists a sequence
	\[
	u_j\in C^\infty(\overline{A_{r_E,r_F}})
	\]
	such that
	\[
	u_j\to u
	\qquad\text{in }W^{1,p}(A_{r_E,r_F}).
	\]
	For each \(j\), define
	\[
	\overline u_j(r)
	:=
	\frac{1}{\omega_{n-1}r^{n-1}}
	\int_{\partial B(0,r)} u_j\,\mathrm{d}\mathcal H^{n-1},
	\qquad r\in(r_E,r_F).
	\]
	Then \(\overline u_j\) is smooth in \(r\) and satisfies
	\[
	\overline u_j'(r)
	=
	\frac{1}{\omega_{n-1}r^{n-1}}
	\int_{\partial B(0,r)} \partial_r u_j\,\mathrm{d}\mathcal H^{n-1}.
	\]
	Passing to the limit in \(W^{1,p}\) in polar coordinates, we obtain that \(\overline u\) belongs to the corresponding one-dimensional weighted space and that, for a.e. \(r\in(r_E,r_F)\),
	\[
	\overline u'(r)
	=
	\frac{1}{\omega_{n-1}r^{n-1}}
	\int_{\partial B(0,r)} \partial_r u\,\mathrm{d}\mathcal H^{n-1}.
	\]

	By Jensen's inequality,
	\[
	|\overline u'(r)|^p
	\le
	\frac{1}{\omega_{n-1}r^{n-1}}
	\int_{\partial B(0,r)} |\partial_r u|^p\,\mathrm{d}\mathcal H^{n-1}
	\le
	\frac{1}{\omega_{n-1}r^{n-1}}
	\int_{\partial B(0,r)} |\nabla u|^p\,\mathrm{d}\mathcal H^{n-1}.
	\]
	Multiplying by \(\omega_{n-1}r^{n-1}\) and integrating over \((r_E,r_F)\), we get
	\[
	\omega_{n-1}\int_{r_E}^{r_F} |\overline u'(r)|^p r^{n-1}\,\mathrm{d}r
	\le
	\int_{B(0,r_F)\setminus \overline{B(0,r_E)}} |\nabla u(x)|^p\,\mathrm{d}x
	\le
	\int_\Omega |\nabla u(x)|^p\,\mathrm{d}x.
	\]

	On the other hand, by the explicit formula for the reduced quantity in the radial model,
	\[
	\omega_{n-1}\int_{r_E}^{r_F} |\overline u'(r)|^p r^{n-1}\,\mathrm{d}r
	\ge
	\omega_{n-1}
	\left(
	\int_{r_E}^{r_F} t^{-\frac{n-1}{p-1}}\,\mathrm{d}t
	\right)^{1-p}.
	\]
	Therefore,
	\[
	\int_\Omega |\nabla u(x)|^p\,\mathrm{d}x
	\ge
	\omega_{n-1}
	\left(
	\int_{r_E}^{r_F} t^{-\frac{n-1}{p-1}}\,\mathrm{d}t
	\right)^{1-p}.
	\]
	Taking the infimum over all \(u\in\mathcal A_{E,F}(\Omega)\), we conclude that
	\[
	\operatorname{Cap}_{p,\Omega}(E,F)
	\ge
	\omega_{n-1}
	\left(
	\int_{r_E}^{r_F} t^{-\frac{n-1}{p-1}}\,\mathrm{d}t
	\right)^{1-p}.
	\]
	Together with the opposite inequality, this yields
	\[
	\operatorname{Cap}_{p,\Omega}(E,F)
	=
	\omega_{n-1}
	\left(
	\int_{r_E}^{r_F} t^{-\frac{n-1}{p-1}}\,\mathrm{d}t
	\right)^{1-p}
	=
	\operatorname{cap}_{p,\theta}(r_E,r_F).
	\]
\end{proof}

\begin{remark}
	In the two previous models, exactness comes from a complete symmetry of the fibers and from a transverse averaging argument that eliminates tangential oscillations without increasing the energy cost. One thus obtains a regime of exactness within the fibered framework.
\end{remark}

\subsection{Tangential obstruction outside the fibered regime}
\label{subsec:obstruccion-tangencial-fibrada}

The previous symmetric models show that the fibered reduction can be exact. Outside that regime, the inequality
\[
\operatorname{Cap}_{p,\Omega}(E,F)\le \operatorname{cap}_{p,\theta}(a,b)
\]
does not by itself imply exactness. In the linear case $p=2$, this separation is determined by the tangential energy of the minimizer relative to the fibers of $\theta$.

\begin{proposition}\label{prop:defecto-fibrado-lineal}
	Suppose that $p=2$. Let $(E,F)$ be an interior condenser in $\Omega$, and let
	\[
	\mathcal K_{E,F}(\Omega)
	:=
	\left\{
	u\in W^{1,2}(\Omega):
	\ u=0 \text{ q.e. on }E,\ 
	u=1 \text{ q.e. on }F
	\right\}.
	\]
	Let $u_*\in \mathcal K_{E,F}(\Omega)$ be a minimizer of the Dirichlet energy in $\mathcal K_{E,F}(\Omega)$, so that
	\[
	\operatorname{Cap}_{2,\Omega}(E,F)
	=
	\int_\Omega |\nabla u_*(x)|^2\,\mathrm{d}x.
	\]
	Suppose in addition that $u_f=v\circ \theta\in \mathcal K_{E,F}(\Omega)$ for some profile $v$, so that $u_f$ is an admissible fibered competitor. Then
	\[
	\int_\Omega |\nabla u_f(x)|^2\,\mathrm{d}x
	-
	\operatorname{Cap}_{2,\Omega}(E,F)
	=
	\int_\Omega |\nabla u_f(x)-\nabla u_*(x)|^2\,\mathrm{d}x.
	\]
	In particular, let
	\[
	R_\theta:=\{x\in\Omega:\nabla\theta(x)\neq 0\}.
	\]
	For a.e. $x\in R_\theta$, define
	\[
	\nu_\theta(x):=\frac{\nabla\theta(x)}{|\nabla\theta(x)|},
	\qquad
	\nabla_\perp u_*(x):=(\nabla u_*(x)\cdot \nu_\theta(x))\,\nu_\theta(x),
	\]
	and
	\[
	\nabla_\Sigma u_*(x):=\nabla u_*(x)-\nabla_\perp u_*(x).
	\]
	Then
	\[
	\int_\Omega |\nabla u_f(x)|^2\,\mathrm{d}x
	-
	\operatorname{Cap}_{2,\Omega}(E,F)
	\ge
	\int_{R_\theta} |\nabla_\Sigma u_*(x)|^2\,\mathrm{d}x.
	\]
\end{proposition}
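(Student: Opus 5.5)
The first identity will follow from the Pythagorean identity for the Dirichlet energy at a minimizer over an affine class. The set $\mathcal K_{E,F}(\Omega)$ is a translate of the linear space
\[
V:=\{w\in W^{1,2}(\Omega):\ w=0 \text{ q.e. on } E\cup F\},
\]
since the difference of two elements of $\mathcal K_{E,F}(\Omega)$ vanishes q.e. on both plates. For $w\in V$ and $s\in\mathbb R$, the function $u_*+sw$ lies in $\mathcal K_{E,F}(\Omega)$. Minimality of $u_*$ then gives, by differentiating the quadratic polynomial $s\mapsto\int_\Omega|\nabla u_*+s\nabla w|^2$ at $s=0$, the orthogonality relation
\[
\int_\Omega \nabla u_*\cdot\nabla w\,\mathrm{d}x=0 \qquad\text{for all } w\in V.
\]
Take $w:=u_f-u_*\in V$ and expand $|\nabla u_f|^2=|\nabla u_*+\nabla w|^2$. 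The cross term vanishes, which yields
\[
\int_\Omega|\nabla u_f|^2-\int_\Omega|\nabla u_*|^2=\int_\Omega|\nabla u_f-\nabla u_*|^2 .
\]
Since $\operatorname{Cap}_{2,\Omega}(E,F)=\int_\Omega|\nabla u_*|^2$ by assumption, this is the first identity.

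For the tangential bound, the point is that $\nabla u_f$ is normal to the fibers. The profile $v$ is implicitly regular enough that $u_f=v\circ\theta\in W^{1,2}$; for instance, $v$ Lipschitz as in Lemma~\ref{lem:extension-truncada-composicion}. Then $\nabla u_f(x)=v'(\theta(x))\nabla\theta(x)$ for a.e. $x$, so on $R_\theta$ we have $\nabla u_f=(\nabla u_f\cdot\nu_\theta)\nu_\theta$ a.e. Hence the tangential projection commutes with the difference,
\[
\nabla_\Sigma(u_f-u_*)=-\nabla_\Sigma u_* \qquad\text{a.e. on } R_\theta .
\]
Pointwise, $|\xi|^2=|\xi\cdot\nu_\theta|^2+|\xi-(\xi\cdot\nu_\theta)\nu_\theta|^2\ge|\xi_\Sigma|^2$. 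Applying this with $\xi=\nabla u_f-\nabla u_*$ and integrating over $R_\theta$ gives
\[
\int_\Omega|\nabla u_f-\nabla u_*|^2\ \ge\ \int_{R_\theta}|\nabla u_f-\nabla u_*|^2\ \ge\ \int_{R_\theta}|\nabla_\Sigma u_*|^2 .
\]
Combining this with the identity from the first step proves the inequality. The measurability of $\nu_\theta$ and $\nabla_\Sigma u_*$ on $R_\theta$ is immediate from the measurability of $\nabla\theta$ and $\nabla u_*$.

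The main obstacle is justifying the chain rule $\nabla(v\circ\theta)=v'(\theta)\nabla\theta$ for a general profile $v$. The statement leaves the class of $v$ unspecified, and without this formula $\nabla u_f$ need not be normal to the fibers. I would first handle Lipschitz $v$ via Lemma~\ref{lem:extension-truncada-composicion}. For a general $v$ with $v\circ\theta\in W^{1,2}$, I would argue that on $\{\theta\in N\}$, with $N$ the null set where $v$ is not differentiable, $\nabla\theta=0$ a.e. (a standard Sobolev fact, via the coarea formula). Consequently $\nabla u_f=0$ a.e. there as well, and this region does not meet $R_\theta$ up to a null set. The remaining step, checking that the projection identity on $R_\theta$ holds a.e., is routine.
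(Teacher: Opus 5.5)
Your proposal is correct and follows the paper's proof essentially step for step: the Euler--Lagrange orthogonality $\int_\Omega \nabla u_*\cdot\nabla(u_f-u_*)\,\mathrm{d}x=0$ yields the Pythagorean identity, and normality of $\nabla u_f$ on $R_\theta$ gives $|\nabla u_f-\nabla u_*|^2\ge|\nabla_\Sigma u_*|^2$ a.e.\ there (the paper phrases this as an orthogonal splitting and discards the normal part). Your treatment of the chain rule is more careful than the paper's, which simply asserts normality, but only the nullity of $\{\theta\in N\}\cap R_\theta$ is needed, and you should drop the extra claim that $\nabla u_f=0$ a.e.\ on $\{\theta\in N\}$: it does not follow from $\nabla\theta=0$ and can fail (take $v$ the Cantor function and $\theta(x)=g(x_1)$ with $g$ Lipschitz mapping a fat Cantor set onto the Cantor set).
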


\begin{proof}
	Since $u_*$ minimizes the energy in the affine class $\mathcal K_{E,F}(\Omega)$, it satisfies the Euler--Lagrange identity
	\[
	\int_\Omega \nabla u_*(x)\cdot \nabla \psi(x)\,\mathrm{d}x=0
	\]
	for every
	\[
	\psi\in W^{1,2}(\Omega)
	\quad\text{such that}\quad
	\psi=0 \text{ q.e. on }E\cup F.
	\]
	Since $u_f,u_*\in \mathcal K_{E,F}(\Omega)$, the difference
	\[
	\psi:=u_f-u_*
	\]
	belongs to $W^{1,2}(\Omega)$ and satisfies $\psi=0$ q.e. on $E\cup F$. Therefore,
	\[
	\int_\Omega \nabla u_*(x)\cdot \nabla (u_f-u_*)(x)\,\mathrm{d}x=0.
	\]
	Expanding, we obtain
	\[
	\int_\Omega \nabla u_*(x)\cdot \nabla u_f(x)\,\mathrm{d}x
	=
	\int_\Omega |\nabla u_*(x)|^2\,\mathrm{d}x.
	\]
	Using now the polarization identity,
	\[
	|\nabla u_f-\nabla u_*|^2
	=
	|\nabla u_f|^2+|\nabla u_*|^2-2\nabla u_f\cdot \nabla u_*,
	\]
	and integrating over $\Omega$, we obtain
	\[
	\int_\Omega |\nabla u_f-\nabla u_*|^2\,\mathrm{d}x
	=
	\int_\Omega |\nabla u_f|^2\,\mathrm{d}x
	-
	\int_\Omega |\nabla u_*|^2\,\mathrm{d}x.
	\]
	Since
	\[
	\int_\Omega |\nabla u_*|^2\,\mathrm{d}x
	=
	\operatorname{Cap}_{2,\Omega}(E,F),
	\]
	it follows that
	\[
	\int_\Omega |\nabla u_f|^2\,\mathrm{d}x
	-
	\operatorname{Cap}_{2,\Omega}(E,F)
	=
	\int_\Omega |\nabla u_f-\nabla u_*|^2\,\mathrm{d}x.
	\]
	
	Suppose now that $u_f=v\circ\theta$. Then $\nabla u_f$ is normal to the fibers of $\theta$ for a.e. $x\in R_\theta$. In the relevant region $R_\theta$, we decompose
	\[
	\nabla u_*
	=
	\nabla_\perp u_*+\nabla_\Sigma u_*,
	\]
	with $\nabla_\perp u_*$ normal to the fibers and $\nabla_\Sigma u_*$ tangential. Since $\nabla u_f$ has no tangential component in $R_\theta$,
	\[
	\nabla u_f-\nabla u_*
	=
	(\nabla u_f-\nabla_\perp u_*)-\nabla_\Sigma u_*,
	\]
	and the two components on the right-hand side are orthogonal at almost every point of $R_\theta$. Consequently,
	\[
	|\nabla u_f-\nabla u_*|^2
	=
	|\nabla u_f-\nabla_\perp u_*|^2
	+
	|\nabla_\Sigma u_*|^2
	\qquad\text{a.e. in }R_\theta.
	\]
Integrating and discarding the nonnegative term $ |\nabla u_f-\nabla_\perp u_*|^2 $, we obtain
	\[
	\int_\Omega |\nabla u_f|^2\,\mathrm{d}x
	-
	\operatorname{Cap}_{2,\Omega}(E,F)
	=
	\int_\Omega |\nabla u_f-\nabla u_*|^2\,\mathrm{d}x
	\ge
	\int_{R_\theta} |\nabla_\Sigma u_*|^2\,\mathrm{d}x.
	\]
\end{proof}

We denote by
\[
\mathcal A^{\mathrm{fib}}_{E,F}(\Omega;\theta)
:=
\left\{
u\in \mathcal K_{E,F}(\Omega):
\ u=v\circ\theta \text{ a.e. in }\Omega
\text{ for some profile }v
\right\}
\]
the fibered subclass of $\mathcal K_{E,F}(\Omega)$ associated with the phase $\theta$.

\begin{corollary}[Tangential control of the fibered defect]
	\label{cor:gap-control-tangencial}
	Under the hypotheses of the previous proposition, define
	\[
	\operatorname{Gap}_\theta(E,F)
	:=
	\inf\left\{
	\int_\Omega |\nabla u_f|^2\,\mathrm{d}x:
	u_f\in \mathcal A^{\mathrm{fib}}_{E,F}(\Omega;\theta)
	\right\}
	-
	\operatorname{Cap}_{2,\Omega}(E,F).
	\]
	Then
	\begin{equation}
		\label{eq:gap-control-tangencial}
		\operatorname{Gap}_\theta(E,F)
		\ge
		\int_{R_\theta} |\nabla_\Sigma u_*|^2\,\mathrm{d}x.
	\end{equation}
\end{corollary}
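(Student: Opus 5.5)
The plan is to apply Proposition~\ref{prop:defecto-fibrado-lineal} to each fibered competitor and then take the infimum over the fibered subclass. The key observation is that the lower bound in that proposition,
\[
\int_{R_\theta} |\nabla_\Sigma u_*|^2\,\mathrm{d}x,
\]
depends only on the minimizer $u_*$ and on the phase $\theta$, and not on the particular competitor $u_f$. It is therefore a uniform lower bound for the energy excess over the whole class $\mathcal A^{\mathrm{fib}}_{E,F}(\Omega;\theta)$.

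First, fix an arbitrary $u_f\in\mathcal A^{\mathrm{fib}}_{E,F}(\Omega;\theta)$. By definition, $u_f\in\mathcal K_{E,F}(\Omega)$ and $u_f=v\circ\theta$ a.e.\ for some profile $v$, so the hypotheses of Proposition~\ref{prop:defecto-fibrado-lineal} hold. That proposition gives
\[
\int_\Omega |\nabla u_f|^2\,\mathrm{d}x \ge \operatorname{Cap}_{2,\Omega}(E,F)+\int_{R_\theta}|\nabla_\Sigma u_*|^2\,\mathrm{d}x .
\]
Second, take the infimum over all such $u_f$. The right-hand side does not depend on $u_f$, so the infimum of the fibered energies is bounded below by the same quantity. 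Subtracting $\operatorname{Cap}_{2,\Omega}(E,F)$ is legitimate: this number is finite, since $u_*$ has finite energy, and it is a fixed real number. This yields \eqref{eq:gap-control-tangencial}.

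The main point needing care is the degenerate case. If the fibered subclass is empty, the infimum equals $+\infty$ by convention, so $\operatorname{Gap}_\theta(E,F)=+\infty$ and the inequality holds trivially.

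A second point is that Proposition~\ref{prop:defecto-fibrado-lineal} requires $\nabla u_f$ to be normal to the fibers a.e.\ in $R_\theta$ when $u_f=v\circ\theta$ holds only a.e.\ and $v$ is a general profile. I take that as given, since the proposition is stated for exactly such competitors. If one wanted to verify it, one would use the chain rule for Lipschitz or $W^{1,1}_{\mathrm{loc}}$ profiles composed with Lipschitz $\theta$. On the set where $v'\circ\theta$ is undefined, $\nabla\theta=0$ a.e., which lies outside $R_\theta$.

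With these remarks, the corollary follows in one line from the proposition.
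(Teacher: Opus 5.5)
Your proposal is correct and is the paper's argument. Both apply Proposition~\ref{prop:defecto-fibrado-lineal} to each fibered competitor, note that the lower bound does not depend on $u_f$, and take the infimum. Your remarks on the empty fibered class and on the finiteness of $\operatorname{Cap}_{2,\Omega}(E,F)$ are harmless refinements that the paper leaves implicit.
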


\begin{proof}
	The inequality \eqref{eq:gap-control-tangencial} holds for every admissible fibered function $u_f$. It is enough to take the infimum over that class.
\end{proof}

\begin{corollary}[Sufficient criterion for non-exact fibered reduction]
	\label{cor:criterio-suficiente-no-exactitud-fibrada}
	Suppose in addition that
	\[
	\mathcal A^{\mathrm{fib}}_{E,F}(\Omega;\theta)\neq\varnothing
	\]
	and that
	\[
	\int_{R_\theta} |\nabla_\Sigma u_*|^2\,\mathrm{d}x>0.
	\]
	Then
	\[
	\operatorname{Cap}_{2,\Omega}(E,F)
	<
	\inf\left\{
	\int_\Omega |\nabla u_f|^2\,\mathrm{d}x:
	u_f\in \mathcal A^{\mathrm{fib}}_{E,F}(\Omega;\theta)
	\right\}.
	\]
	In particular, the fibered reduction is not exact for that condenser and that phase.
\end{corollary}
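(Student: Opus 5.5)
The statement to prove is Corollary~\ref{cor:criterio-suficiente-no-exactitud-fibrada}. It follows directly from Corollary~\ref{cor:gap-control-tangencial}; the only work is checking that the infimum over the fibered class is a genuine real number, so that the gap is well defined.

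\emph{Step 1 (the infimum is finite).} Since $\mathcal A^{\mathrm{fib}}_{E,F}(\Omega;\theta)\neq\varnothing$, there is an element $u_f$. Because $u_f\in W^{1,2}(\Omega)$, its energy $\int_\Omega|\nabla u_f|^2\,\mathrm{d}x$ is finite. Hence the infimum
\[
m_{\mathrm{fib}}:=\inf\Bigl\{\int_\Omega |\nabla u_f|^2\,\mathrm{d}x:\ u_f\in \mathcal A^{\mathrm{fib}}_{E,F}(\Omega;\theta)\Bigr\}
\]
is a finite real number, bounded below by $0$. Moreover, $\operatorname{Cap}_{2,\Omega}(E,F)=\int_\Omega|\nabla u_*|^2\,\mathrm{d}x$ is finite. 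Therefore $\operatorname{Gap}_\theta(E,F)=m_{\mathrm{fib}}-\operatorname{Cap}_{2,\Omega}(E,F)$ is a well-defined real number rather than an indeterminate expression.

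\emph{Step 2 (apply the gap bound).} By Corollary~\ref{cor:gap-control-tangencial},
\[
\operatorname{Gap}_\theta(E,F)\ge\int_{R_\theta}|\nabla_\Sigma u_*|^2\,\mathrm{d}x.
\]
By hypothesis the right-hand side is strictly positive, so $m_{\mathrm{fib}}>\operatorname{Cap}_{2,\Omega}(E,F)$, which is the claimed strict inequality.

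\emph{Step 3 (interpretation).} Nonexactness means that no fibered competitor attains, or even approaches, the full capacity. This is immediate from the strict inequality, because $\operatorname{Cap}_{2,\Omega}(E,F)$ is the minimum over the larger class $\mathcal K_{E,F}(\Omega)$, which contains $\mathcal A^{\mathrm{fib}}_{E,F}(\Omega;\theta)$.

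The only real subtlety is the point handled in Step 1: if the fibered class were empty, the infimum would be $+\infty$ and the conclusion would hold only in a degenerate sense. This is exactly why the nonemptiness hypothesis is imposed. Beyond that, the argument just combines the preceding corollary with strict positivity.
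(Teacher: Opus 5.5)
Your proposal is correct and follows the paper's argument, which simply reads the strict inequality off \eqref{eq:gap-control-tangencial} together with the positivity of the tangential term. Your Step 1 finiteness check is a harmless addition that the paper leaves implicit; the strict inequality would hold even without it, since an empty fibered class gives an infimum of $+\infty$, which is still strictly larger than the finite capacity.
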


\begin{proof}
	The conclusion is immediate from \eqref{eq:gap-control-tangencial}.
\end{proof}

\begin{remark}[Sufficient criterion for fibered exactness]
	\label{rem:criterio-suficiente-exactitud-fibrada}
	Under the hypotheses of the linear regime above, if the geometric minimizer $u_*$ belongs to the fibered subclass
	\[
	\mathcal A^{\mathrm{fib}}_{E,F}(\Omega;\theta),
	\]
	then
	\[
	\operatorname{Cap}_{2,\Omega}(E,F)
	=
	\inf\left\{
	\int_\Omega |\nabla u_f|^2\,\mathrm{d}x:
	u_f\in \mathcal A^{\mathrm{fib}}_{E,F}(\Omega;\theta)
	\right\},
	\]
	that is, the fibered reduction is exact for that condenser and that phase.
	
	In particular, the condition
	\[
	\nabla_\Sigma u_*=0
	\qquad\text{a.e. in }R_\theta
	\]
	serves here as a sufficient criterion when it is accompanied by an effective fibered representation of $u_*$ compatible with the plate conditions. By itself, the almost everywhere vanishing of the tangential component is not elevated in this work to an abstract global criterion of exactness.
\end{remark}

The previous results identify the following variational mechanism: in the linear case, every nonzero tangential component of the geometric minimizer $u_*$ relative to the fibers of $\theta$ produces a positive defect with respect to the fibered class. Thus, the exactness of the reduction requires at least that the geometric minimizer be, in the $L^2$ sense, purely normal to the level surfaces of the phase.

This reduction of the defect to a geometric condition on the linear minimizer identifies precisely the role of the tangential component in the failure of fibered exactness. In particular, a strict gap appears when the geometric minimizer does not admit a fibered representation with respect to the chosen phase, or equivalently, when its tangential component does not vanish.

\bibliographystyle{plain}
\bibliography{AbsCond}

\end{document}